\newcommand{\Ctwo}{\ensuremath{\mathcal{C}^2 }}
\crefname{hypothesis}{Hypothesis}{Hypotheses}
\newtheorem{assumption}[theorem]{Assumption}
\newtheorem{example}{Example}
\title{A sequential quadratic programming method for nonsmooth stochastic optimization with upper-$\Ctwo$ objective
        \thanks{Submitted to the editors. LLNL release number LLNL-JRNL-846514.
\funding{Prepared by Lawrence Livermore National Laboratory (LLNL) under Contract DE-AC52-07NA27344. }}}
\author{Jingyi Wang\thanks{Center for Applied Scientific Computing, Lawrence Livermore National Laboratory,
Livermore, CA 
  (\email{wang125@llnl.gov, petra1@llnl.gov}).}
\and Ignacio Aravena \thanks{Computational Engineering Division, Lawrence Livermore National Laboratory,
Livermore, CA 
  (\email{aravenasolis1@llnl.gov}).}
\and Cosmin G. Petra\footnotemark[2]
}
\begin{document}
\nolinenumbers
\maketitle

\begin{abstract}
    We propose a sequential quadratic programming (SQP) method that can incorporate adaptive sampling for stochastic nonsmooth nonconvex optimization problems with upper-$\Ctwo$ objectives. Upper-$\Ctwo$ functions can be viewed as difference-of-convex (DC) functions with smooth convex parts. They are common among certain classes of solutions to parametric optimization problems, \textit{e.g.}, recourse of stochastic programming and closest-point projection onto closed sets. Our proposed algorithm is a stochastic SQP with line search and bounded algorithmic parameters and is shown to achieve subsequential convergence in expectation for nonsmooth problems with upper-$\Ctwo$ objectives. We discuss various sampling strategies, including an adaptive sampling one, that can potentially improve algorithm efficiency. 
The capabilities of our algorithm are demonstrated by solving a joint production, pricing and shipment problem, as well as a realistic optimal power flow problem as used in current power grid industry practice.
\end{abstract}
\begin{keywords}
   SQP, stochastic, nonsmooth, sampling, adaptive, upper-$\Ctwo$, DC
\end{keywords}

\begin{AMS}
49M37, 65K05, 90C26, 90C30, 90C55, 90C15 
\end{AMS}


\newcommand{\Rbb}{\ensuremath{\mathbb{R} }}
\newcommand{\Nbb}{\ensuremath{\mathbb{N} }}
\newcommand{\Ebb}{\ensuremath{\mathbb{E} }}
\newcommand{\Pbb}{\ensuremath{\mathbb{P} }}
\section{Introduction}\label{se:intro}

In this paper, we consider the class of nonsmooth nonconvex  optimization problems in the form of 
\begin{equation} \label{eqn:opt0}
 \centering
  \begin{aligned}
	  &\underset{\substack{x}\in C}{\text{minimize}} 
	  & & f(x)+ r(x), \\
          &\text{subject to}
	  & & c(x) = 0,
  \end{aligned}
\end{equation}
where the objective function $f:\Rbb^n\to\Rbb$ and equality constraint function $c:\Rbb^n\to\Rbb^m$ are continuously differentiable with Lipschitz gradients and $C \subset \Rbb^n$ is a convex compact set.
The stochastic part of the objective is given as $r(x)=\mathbb{E}_{\xi}\left[R(x,\xi)\right]$, where 
the expectation $\mathbb{E}_{\xi}$ is the expected value of the function $R(x,\xi)$ with respect to $\xi$.
The random vector $\xi$ is defined over a probability space $(\Omega,\mathcal{F},\mathbb{P})$, where $\Omega$ is the sample space, $\mathcal{F}$ the $\sigma$-algebra generated by the subspace of $\Omega$ and $\mathbb{P}$ a probability measure on $\mathcal{F}$.  
In this work, we focus on $r:\Rbb^n \to\Rbb$ 
that is Lipschitz continuous and  
upper-$\Ctwo$ (see monograph~\cite{rockafellar1998} and Section~\ref{sec:prob}), but potentially nonsmooth. 

Upper-$\Ctwo$ functions are a subset of DC functions, and is equivalent to weakly concave functions on a convex compact set in $\Rbb^n$~\cite{cui2021}. 
Any finite, concave function is upper-$\Ctwo$~\cite[Theorem~10.33]{rockafellar1998}, as are all continuously differentiable functions~\cite[Proposition~13.34]{rockafellar1998}.
Moreover, a squared distance function to a closed set, which is  
the value function of a minimization problem, is upper-$\Ctwo$~\cite[Example~10.57]{rockafellar1998}.
The property exists in many important two-stage stochastic programming problems with recourse of the form of~\eqref{eqn:opt0}~\cite{Shapiro_book,Birge97Book,KallWallace}, 
whose first-stage objective includes an upper-$\Ctwo$ value function $r$ of the second-stage problem.
The application that motivated us to look into this property is 
the security-constrained alternating current optimal power flow (SCACOPF) problem~\cite{ChiangPetraZavala_14_PIPSNLP,Qiu2005,petra_14_augIncomplete,petra_14_realtime,petra_21_gollnlp}, which can be stated as a stochastic two-stage problem. 
In this case, the nonsmooth part of the first-stage objective $r$ becomes upper-$\Ctwo$ through regularization of the second-stage problems, if it is not already so~\cite{wang2022}. 
We add that upper-$\Ctwo$ does not guarantee differentiability, or (lower) regularity (see Section~\ref{sec:prob}). 

The most successful nonsmooth optimization methods include subgradient methods~\cite{shor1985}, bundle methods~\cite{makela1992,Kiwiel1996}, and DC algorithms~\cite{an2018}. 
In particular, many bundle methods have been shown to enjoy asymptotic global convergence with lower-$\mathcal{C}^1$ or upper-$\mathcal{C}^1$ objectives~\cite{lemarechal1996,lemarechal1978,schramm1992,noll2009,hare2010,noll2013}. 
For constrained problems, bundle methods typically incorporate constraints into the objective through penalty or filter methods so that the bundle approximation is constructed for the new objective~\cite{hare2015,dao2016}. The extension to stochastic optimization is challenging as the piecewise-linear cutting planes are no longer exact. However, if the error from the finite approximation to the stochastic objective is bounded, bundle methods have been proven to converge to proximity of certain optimality points with inexact information~\cite{oliveira2011,hare2015,noll2013}. 

Recently, a significant body of literature has been dedicated to research of (nonsmooth) stochastic optimization methods, where prominent algorithms include stochastic gradient descent~\cite{bottou2012stochastic} and stochastic subgradient descent~\cite{davis2019,ram2009incremental,nedic2014stochastic}. 
For constrained smooth stochastic problems, sequential quadratic programming (SQP) has been applied to stochastic objective and deterministic constraints, including both equality and inequality constraints~\cite{berahas2021sqpsto}. 
Nonsmooth stochastic DC algorithms (DCA) have garnered great interest as well, typically through combining DCA with sample average approximation (SAA)~\cite{liu2020nonsmooth,le2022stochastic,liu2020}. 
The smoothing of the objective is explored in~\cite{kouri2022primaldual}, where a primal-dual regularization of the objective combined with SAA provides a solution to problems with certain nonsmooth risk measures. 

In addition to SAA, adaptive sampling, which aims to improve efficiency of algorithms by adaptive sample size, is an active research topic for stochastic gradient descent, augmented Lagrangian, and SQP methods~\cite{byrd2012,berahas2022adaptive}. 
The sample size is commonly controlled via variance in the stochastic approximation of quantities such as the gradient. In the unconstrained setting, a well-established norm test~\cite{byrd2012,carter1991} can be used.
Its theory and application to smooth objectives on convex feasible regions has been shown in~\cite{berahas2022adaptive}. 

To the best of our knowledge, algorithm development for stochastic nonsmooth constrained problems, particularly with adaptive sampling, has been limited. 
In~\cite{wang2022}, we described a nonsmooth SQP method for upper-$\Ctwo$ objectives. The method uses line search on the constraints and trust-region update rule to achieve convergence with practical success in solving SCACOPF~\cite{wang2021}. In this paper, we extend the effort to stochastic optimization problems. In the stochastic setting, the trust-region update rule and serious step criterion are no longer available without exact function values. We point out that since both DCAs and our stochastic SQP linearizes the nonconvex part of the objective, the reader can also view the proposed SQP algorithm as a version of DCA designed for constrained problems with upper-$\Ctwo$ objectives. 

This paper is organized as follows. In Section~\ref{sec:prob}, we describe the mathematical notations and background necessary for the stochastic optimization algorithm.
In Section~\ref{sec:alg-convex}, we propose an algorithm for a simplified version of problem~\eqref{eqn:opt0} 
where the equality constraints $c(x)=0$ are neglected, making line search unnecessary. We provide subsequential convergence analysis 
for the algorithm with probability 1, under assumptions typically made for stochastic and nonsmooth optimization.
In Section~\ref{sec:alg-line-search}, we expand the algorithm to the full problem~\eqref{eqn:opt0} based on 
a line search algorithm the authors previously developed~\cite{wang2022}. We show subsequential convergence with probability 1 of the algorithm 
to Karush–Kuhn–Tucker (KKT) points (points that satisfy KKT conditions) under proper assumptions. 
In section~\ref{sec:adp-sample}, we present adaptive sampling criteria in theory and implementation that could lead to more efficient results in practice. 
Numerical experiments are shown in Section~\ref{sec:exp} to illustrate the practical capabilities of the proposed algorithm.
Finally, Section~\ref{sec:con} concludes the paper and suggests directions for future research.

\newcommand{\norm}[1]{\left\lVert {#1} \right\rVert}
\section{Background and notations}\label{sec:prob}

In this section, we discuss the mathematical background and notations necessary for the analysis in this paper. 
First, the lower regular subdifferential of a function $r:\Rbb^n\to\Rbb$ at point $\bar{x}$, 
denoted as $\hat{\partial} r(\bar{x})$, is defined by  
\begin{equation}\label{eqn:subgradient-def}
\centering
 \begin{aligned}
	\hat{\partial} r (\bar{x}) = \left\{ g\in\Rbb^n |\liminf_{ \substack{x\to \bar{x}\\x\neq \bar{x} }}\frac{r(x)-r(\bar{x})-\langle g,x-\bar{x}\rangle  }{\norm{x-\bar{x}}}\geq 0\right\},
 \end{aligned}
\end{equation}
where $\norm{\cdot}$ is the 2-norm and $\langle \cdot \rangle$ is the inner product in $\Rbb^n$~\cite{rockafellar1998}.
For a sequence $\{x^{\nu}\}$, the notation $x^{\nu}\to x$ means $\norm{x^{\nu}-x}\to 0$ as $\nu\to\infty$.
If there exists a sequence $\{x^{\nu}\}$ such that $x^{\nu} \xrightarrow[r]{} \bar{x}$ and $g^{\nu}\in \hat{\partial}r(x^{\nu})$ with 
$\{g^{\nu}\}$ converging $g^{\nu}\to \bar{g}\in\Rbb^n$, then 
$\bar{g}$ is a lower general subgradient of $r(\bar{x})$. The f-attentive convergence, denoted as  
$ x^{\nu} \xrightarrow[r]{} \bar{x}$, is  
\begin{equation}\label{def:f-attentive}
\centering
 \begin{aligned}
	 x^{\nu} \xrightarrow[r]{} \bar{x} \quad \ratio\Leftrightarrow \quad x^{\nu}\to\bar{x} \quad \text{with} \quad r(x^{\nu})\to r(\bar{x}),
 \end{aligned}
\end{equation}
and is trivially satisfied for $x^{\nu}\to \bar{x}$ when $r$ is Lipschitz (see 8(2),~\cite{rockafellar1998} for details). 
The lower general subdifferential at $\bar{x}$ is denoted as $\partial r(\bar{x})$.

A Lipschitz function $r$ is lower regular (or subdifferentially regular) 
if and only if $\partial r(\bar{x})=\hat{\partial} r(\bar{x})$~\cite[Corollary~8.11]{rockafellar1998}. 
Due to its importance, lower general subgradient is often simply called general subgradient, 
while a lower regular function is called regular. 
On the other hand, 
upper regular subdifferential~\cite{rockafellar1998,mordukhovich2004upp} is defined as
\begin{equation}\label{def:upp-subgradient}
\centering
 \begin{aligned}
	 \hat{\partial}^+ r(\bar{x}) =& - \hat{\partial} (-r)(\bar{x})
	 =\left\{ g\in\Rbb^n |\limsup_{ \substack{x\to \bar{x}\\x\neq \bar{x} }}\frac{r(x)-r(\bar{x})-\langle g,x-\bar{x}\rangle  }{\norm{x-\bar{x}}}\leq 0\right\}.
 \end{aligned}
\end{equation}
The upper general subdifferential is $\partial^+ r(\bar{x}) = -\partial (-r)(\bar{x})$.
A function $r$ is called upper regular if $-r$ is lower regular. 

In nonsmooth optimization literature, both Clarke subdifferential~\cite{clarke1983}, denoted as $\bar{\partial} r{(\bar{x})}$ of $r$ at $\bar{x}$, and $\partial r(\bar{x})$ have been widely adopted.
If $r$ is lower regular, $\partial r(\bar{x}) =\bar{\partial} r{(\bar{x})}$~\cite[Theorem~8.6,~8.49,~9.61]{rockafellar1998}.
The same holds for an upper regular $r$ as well (for proof see~\cite{wang2022}) and lets one use its Clarke subgradient and upper general subgradient interchangeably. Thus, in the remainder of the paper, we simply refer to Clarke subgradient as subgradient.
Clarke subdifferential is  outer/upper-semicontinuous, which is necessary in establishing subsequential convergence~\cite[Proposition~6.6]{rockafellar1998}. 
In addition, for a Lipschitz $r$, $\bar{\partial} r(\bar{x})$ is locally bounded~\cite[Theorem~9.13]{rockafellar1998}. 
Next, we discuss lower-$\mathcal{C}^k$, introduced in~\cite{Spingarn1981SubmonotoneSO,rockafellar1998}
with equivalent definitions in~\cite{daniilidis2004}. 
A function $r:O\to \Rbb$, where $O\subset\Rbb^n$ is open, is said to be lower-$\mathcal{C}^k$ on $O$, if on some neighborhood $V$ of each $x\in O$ there is a representation
\begin{equation}\label{eqn:lowc1-def-0}
\centering
 \begin{aligned}
	 r(x) =  \underset{\substack{t \in T}}{\text{max}} \ r_t(x),
 \end{aligned}
\end{equation}
where $r_t:\Rbb^n\to\Rbb$ is of class $\mathcal{C}^k$ on $V$ and the index set $T$ is a compact space such that $r_t$ 
and all of its partial derivatives through order $k$ are continuous on $(t,x)\in T\times V$. 
A function is upper-$\mathcal{C}^k$ if we replace the $max$ with $min$ in~\eqref{eqn:lowc1-def-0}.
Let $T\subset\Rbb^p$ be compact, the function $r$ is upper-$\Ctwo$ if it can be expressed as
\begin{equation}\label{eqn:uppc2-def-1}
\centering
 \begin{aligned}
	 r(x) =  \underset{\substack{t \in T}}{\text{min}} \ p(t,x)
 \end{aligned}
\end{equation}
for all $x\in O$, such that $p(\cdot,\cdot):\Rbb^p\times\Rbb^n\to\Rbb$ and its first- and second-order partial derivatives in $x$ 
are continuous on $(t,x)$. Clearly, upper-/lower-$\mathcal{C}^k$ imply upper-/lower-regularity.

Equivalently, a finite-valued function $r$ is lower-$\Ctwo$ on $O\subset\Rbb^n$ if and only if there exists $\rho>0$ such that 
$r(\cdot)+\frac{1}{2}\rho\norm{\cdot}^2$ is convex. 
Notice that this definition is already given on $O$ with a uniform $\rho$. 
If $r$ is upper-$\Ctwo$, \textit{e.g.}, $r(x)=-|x|,x\in\Rbb$, by definition, $-r$ is lower-$\Ctwo$. 
Through simple arithmetic and convexity with subgradients, we have 
\begin{equation}\label{eqn:uppc2-def}
\centering
 \begin{aligned}
	 r(x) - r(\bar{x}) - \langle g,x -\bar{x} \rangle \leq \frac{\rho}{2}\norm{x-\bar{x}}^2, 
 \end{aligned}
\end{equation}
where $g \in \bar{\partial} r(\bar{x})$.
We refer to~\eqref{eqn:uppc2-def} as the upper-$\Ctwo$ inequality. 
Without loss of generality,~\eqref{eqn:opt0} can be simplified to
\begin{equation} \label{eqn:opt-prob-sto}
 \centering
  \begin{aligned}
   &\underset{\substack{x \in C}}{\text{minimize}} 
	  & & r(x)=\Ebb\left[R(x,\xi) \right],\\
	  &\text{subject to} &&c(x) = 0.
  \end{aligned}
\end{equation}
There are multiple optimality conditions used for nonsmooth constrained problems, \textit{e.g.}, 
stationary point, KKT point, critical point. 
In this paper, we assume linear independent constraint qualification (LICQ, Section~\ref{sec:alg-line-search}) 
at local minima and accumulation points of the proposed algorithms for~\eqref{eqn:opt-prob-sto}, due to presence of both equality 
and inequality constraints. This enables us 
to establish the necessary optimality condition in terms of subgradient~\cite[Section~6.4]{clarke1983}.  
For problem~\eqref{eqn:opt-prob-sto}, at a local minimum $\bar{x}$, there exists $\bar{\lambda} \in \Rbb^m $ such that 
\begin{equation} \label{eqn:opt-sto-KKT}
   \centering
  \begin{aligned}
	  0 \in  \bar{\partial} r(\bar{x}) + \nabla c (\bar{x}) \bar{\lambda} + \bar{\partial} i_C(\bar{x})&, \\
	  c_j(\bar{x}) = 0,j=1,\dots,m, \ \bar{x}\in C&,
  \end{aligned}
\end{equation}
where the matrix $\nabla c(\bar{x})$ is of dimension $n\times m$, and $\bar{\partial} i_C(\cdot)$ is
the (Clarke) subdifferential of the indicator function $i_C(\cdot)$ defined as
\begin{equation} \label{eqn:def-ind}
 \centering
  \begin{aligned}
          i_C(x)  =   \begin{cases}
                  0,  &\text{if} \ x \in C, \\ 
                  +\infty, &\text{otherwise.}
    \end{cases}
  \end{aligned}
\end{equation}
Note that because $C$ is convex, so is $i_C(\cdot)$. 
We call a point that satisfies~\eqref{eqn:opt-sto-KKT} a KKT point of~\eqref{eqn:opt-prob-sto}.
Since~\eqref{eqn:opt-sto-KKT} is defined with Clarke subdifferential, a point that satisfies it can be  referred to as a Clarke stationary point~\cite{cui2021}.

\section{\normalsize Stochastic optimization with convex set constraint}\label{sec:alg-convex}
In this section, we focus on a simplified problem
\begin{equation} \label{eqn:opt-prob-sto-convex}
 \centering
  \begin{aligned}
   &\underset{\substack{x \in C}}{\text{minimize}} 
	  & & r(x)=\Ebb\left[R(x,\xi) \right],
  \end{aligned}
\end{equation}
which permits simpler algorithms and assumptions necessary for convergence.
Motivated by the discussion in section~\ref{sec:prob}, two assumptions are formalized below.
\begin{assumption}\label{assp:upperC2}
	The Lipschitz continuous objective $r$ in~\eqref{eqn:opt-prob-sto-convex} is upper-$\Ctwo$. 
\end{assumption}
\noindent In particular, inequality~\eqref{eqn:uppc2-def} is satisfied. 
\begin{assumption}\label{assp:boundedHc}
   The feasible region $C$ is convex and compact.
\end{assumption}
Here, we opt for a compact $C$ instead of a closed $C$ with bounded iterates $\{x_k\}$ for ease of presentation.
A bounded $C$ would ensure bounded $\{x_k\}$ and given Assumption~\ref{assp:upperC2} guarantee a bounded $r(\cdot)$ as well. 
Therefore, there exists $r_{\min}$ such that $r_{\min}\leq r(x), \forall x\in C$.
It is assumed that $x\in C$ can be enforced in the optimization subproblems, \textit{e.g.}, through projection onto $C$.

\subsection{Algorithm description}\label{sec:alg-convex-description}
At iteration $k$ and its iterate $x_k$, the local approximation model to the objective with a true subgradient $g_k\in \bar{\partial} r(x_k)$ is
\begin{equation} \label{eqn:opt-true-appx}
 \centering
  \begin{aligned}
	  \phi_k(x) = r(x_k) + g_k^T(x-x_k) + \frac{1}{2}\alpha_k \norm{x-x_k}^2,
  \end{aligned}
\end{equation}
where $\alpha_k>0$ is a scalar quadratic coefficient. 
In the stochastic setting, however, we use a stochastic subgradient estimate $\bar{g}_k$  of $g_k$ and the local model $\bar{\phi}_k(\cdot)$ is
\begin{equation} \label{eqn:opt-sto-appx}
 \centering
  \begin{aligned}
	  \bar{\phi}_k(x) = \bar{r}_k + \bar{g}_k^T(x-x_k) + \frac{1}{2}\alpha_k \norm{x-x_k}^2.
  \end{aligned}
\end{equation}
The stochastic estimate of function value $\bar{r}_k$ does not affect the optimization subproblem and its solution. Hence, we hereby use it interchangeably with $r(x_k)$. 
Denoting $d=x-x_k$, $\bar{\phi}_k(x)$ and $\phi_k(x)$ can be rewritten as $\bar{\Phi}_k(d)$ and $\Phi_k(d)$, respectively, 
\begin{equation} \label{eqn:opt-sto-appx-d}
 \centering
  \begin{aligned}
	  \bar{\Phi}_k(d) = \bar{r}_k + \bar{g}_k^T d +\frac{1}{2}\alpha_k\left\lVert d\right\rVert ^2, \ 
	  \Phi_k(d) = r(x_k) + g_k^Td + \frac{1}{2}\alpha_k \norm{d}^2.
  \end{aligned}
\end{equation}
%
The subproblem to be solved at iteration $k$ and $x_k$ is thus  
\begin{equation} \label{eqn:opt-sto-convex}
 \centering
  \begin{aligned}
   &\underset{\substack{x_k+d\in C}}{\text{minimize}} 
	  & & \bar{\Phi}_k(d).
  \end{aligned}
\end{equation}
Meanwhile, the corresponding subproblem with true subgradient is
\begin{equation} \label{eqn:opt-true-convex}
 \centering
  \begin{aligned}
   &\underset{\substack{x_k+d\in C}}{\text{minimize}} 
	  & & \Phi_k(d).
  \end{aligned}
\end{equation}
Let $\bar{d}_k$ be the solution to~\eqref{eqn:opt-sto-convex} and $d_k$ the solution to~\eqref{eqn:opt-true-convex}.
The predicted change on the objective is defined as 
\begin{equation} \label{def:pd}
 \centering
  \begin{aligned}
	  \bar{\delta}_k =& \bar{\Phi}_k(0) - \bar{\Phi}_k(\bar{d}_k) 
	   = -\bar{g}_k^T \bar{d}_k -\frac{1}{2}\alpha_k\norm{\bar{d}_k}^2.
  \end{aligned}
\end{equation}
Similarly, $\delta_k =  -g_k^T d_k -\frac{1}{2}\alpha_k\norm{d_k}^2$.
The choice of $\alpha_k$ is independent of the stochastic subgradient.
We rely on knowledge of $r(\cdot)$ such as its Lipschitz constant 
to select $\alpha_k$ such that $\alpha_k$ is large enough to ensure convergence in expectation, in line with stochastic optimization literature~\cite{berahas2021sqpsto}. 
The first-order optimality conditions of subproblem~\eqref{eqn:opt-sto-convex} are
\begin{equation} \label{eqn:sto-convex-KKT}
  \centering
   \begin{aligned}
	   &\bar{g}_k + \alpha_k \bar{d}_k + \bar{v}_k =0,\\
	   &x_k+\bar{d}_k \in C,
	   \bar{v}_k \in \bar{\partial} i_C (x_k+\bar{d}_k).
   \end{aligned}
 \end{equation}
The first-order optimality conditions of subproblem~\eqref{eqn:opt-true-convex} are
\begin{equation} \label{eqn:true-convex-KKT}
  \centering
   \begin{aligned}
	   &g_k + \alpha_k d_k + v_k  =0,\\
	   &x_k+d_k \in C,
	   v_k \in \bar{\partial} i_C (x_k+ d_k).
   \end{aligned}
 \end{equation}

The stochastic quantities are estimated at each iteration with independent and identically distributed (i.i.d.) random variables $\xi$.
 We use $S_k$ to denote the set of samples of $\xi$  at iteration $k$ with realizations $\xi_i,\xi_i\in S_k$. 
The cardinality or sample size of $S_k$ is defined as $N_k = |S_k|$.

The simplified stochastic SQP is presented in Algorithm~\ref{alg:sto-convex}.
While we require $\{\alpha_k\}$ to be monotonically non-decreasing, in step $7$, for a simpler analysis, it is possible to relax this requirement as long as $\rho\leq\alpha_k\leq \eta_{\alpha}\rho$ is maintained.
Additionally, the quadratic term $\alpha_k\norm{d_k}^2$ can be replaced by $d_k^T B_k d_k$, where $B_k$ is a positive definite matrix. We leave the exact update rule for $\alpha_k$ to be specified for each application.
\begin{algorithm}
   \caption{Simplified stochastic nonsmooth SQP}\label{alg:sto-convex}
	\begin{algorithmic}[1]
	    \STATE{Choose scalar $\eta_{\alpha}>1$.
		  Initialize $x_0$, $\rho\leq\alpha_0\leq\eta_{\alpha}\rho$ and $k=0$.
		  Choose initial sampling set $S_0$.
		}
	\FOR{$k=0,1,2,...$}
                \STATE{Generate sample sets $\{\xi_i\},\xi_i\in S_k$ i.i.d. from probability distribution of $\xi$.}
		\STATE{Evaluate the function value $\bar{r}_k$ and stochastic subgradient estimate $\bar{g}_k$.}
		\STATE{Form the objective $\bar{\Phi}_k$ in~\eqref{eqn:opt-sto-appx-d} and solve 
		subproblem~\eqref{eqn:opt-sto-convex} to obtain $\bar{d}_{k}$.} 
		\STATE{Take the step $x_{k+1} = x_k + \bar{d}_k$.}
		\STATE{Call the $\alpha_k$ update rules to obtain $\alpha_{k+1}\in [\alpha_k,\eta_{\alpha}\rho]$.}
       \ENDFOR
    \end{algorithmic}
\end{algorithm}
\begin{remark}\label{rmrk:bigalpha}
	Our analysis presumes that $\rho$ is known. We recognize that might not be the case. Therefore, in practical applications, an estimate of $\rho$ is often needed for the choice of $\alpha_k$ such that $\alpha_k\geq\rho$ can be maintained. Some practical techniques for a dynamic estimate of $\rho$ can be found in~\cite{curtis2019exploiting,berahas2021sqpsto,bollapragada2018adaptive}. 
The trust-region update rule for $\alpha_k$ in the deterministic case in~\cite{wang2022} can be extended here if accurate evaluation of function values are available. 
\end{remark}

\subsection{Convergence analysis}\label{sec:alg-convg}
In this section, we assume Assumption~\ref{assp:upperC2},~\ref{assp:boundedHc} are valid.
The choice of $\alpha_k$ in step 7 of Algorithm~\ref{alg:sto-convex} leads to the following Lemma.
\begin{lemma}\label{lem:sto-convex-f-decrease}
	The iterate $x_{k+1}$ from Algorithm~\ref{alg:sto-convex} 
	satisfies $r(x_k)-r(x_{k+1})\geq \Phi_k(0)-\Phi_k(\bar{d}_k)$. 
\end{lemma}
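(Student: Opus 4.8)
The plan is to derive the inequality directly from the upper-$\Ctwo$ inequality~\eqref{eqn:uppc2-def} combined with the lower bound $\alpha_k \geq \rho$ that step 7 of Algorithm~\ref{alg:sto-convex} enforces. First I would unpack the right-hand side using the definition of $\Phi_k$ in~\eqref{eqn:opt-sto-appx-d}. Since $\Phi_k(0)=r(x_k)$ and $\Phi_k(\bar{d}_k)=r(x_k)+g_k^T\bar{d}_k+\tfrac12\alpha_k\norm{\bar{d}_k}^2$, this gives
\[
\Phi_k(0)-\Phi_k(\bar{d}_k) = -g_k^T\bar{d}_k - \tfrac12\alpha_k\norm{\bar{d}_k}^2 .
\]

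Next, I would apply the upper-$\Ctwo$ inequality~\eqref{eqn:uppc2-def} at $\bar{x}=x_k$ with the genuine subgradient $g_k\in\bar{\partial}r(x_k)$ and the point $x=x_{k+1}=x_k+\bar{d}_k$. This yields $r(x_{k+1})-r(x_k)-g_k^T\bar{d}_k \leq \tfrac{\rho}{2}\norm{\bar{d}_k}^2$, which rearranges to
\[
r(x_k)-r(x_{k+1}) \geq -g_k^T\bar{d}_k - \tfrac{\rho}{2}\norm{\bar{d}_k}^2 .
\]
Finally, invoking $\alpha_k\geq\rho$ to replace $\rho$ by $\alpha_k$ in the quadratic term (which only lowers the right-hand side) produces $r(x_k)-r(x_{k+1}) \geq -g_k^T\bar{d}_k - \tfrac12\alpha_k\norm{\bar{d}_k}^2 = \Phi_k(0)-\Phi_k(\bar{d}_k)$, which is the claim.

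There is no real obstacle: the argument is a two-line estimate. The only point that warrants care is conceptual rather than technical. The model $\Phi_k$ is built from the \emph{true} subgradient $g_k$, not the stochastic estimate $\bar{g}_k$, yet it is evaluated at the \emph{stochastic} step $\bar{d}_k$ returned by subproblem~\eqref{eqn:opt-sto-convex}. The upper-$\Ctwo$ inequality nonetheless applies, because it requires only that $g_k$ be an actual subgradient at $x_k$ and holds for the arbitrary displacement $\bar{d}_k$. The lower bound $\alpha_k\geq\rho$ imposed in the algorithm plays exactly the role of absorbing the curvature term $\tfrac{\rho}{2}\norm{\bar{d}_k}^2$ from the upper-$\Ctwo$ bound, which is why that bound on $\alpha_k$ is needed here and is the mechanism that turns the predicted decrease into a guaranteed decrease of the true objective.
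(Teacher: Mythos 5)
Your proof is correct and follows essentially the same route as the paper: apply the upper-$\Ctwo$ inequality~\eqref{eqn:uppc2-def} with the true subgradient $g_k$ at the stochastic step $\bar{d}_k$, then use $\alpha_k\geq\rho$ (guaranteed by the initialization and monotone update in step 7) to pass from $\tfrac{\rho}{2}\norm{\bar{d}_k}^2$ to $\tfrac{1}{2}\alpha_k\norm{\bar{d}_k}^2$. Your remark that the inequality holds even though $\Phi_k$ uses $g_k$ while $\bar{d}_k$ comes from the stochastic subproblem is exactly the right point of care, and it matches the paper's reasoning.
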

\begin{proof}
	From the upper-$\Ctwo$ inequality~\eqref{eqn:uppc2-def}, we have  
\begin{equation} \label{eqn:sto-convex-f-decrease-p1}
 \centering
  \begin{aligned}
	  r(x_k+d)- r(x_k) - g_{k}^Td \leq \frac{\rho}{2} \norm{d}^2,\\
  \end{aligned}
\end{equation}
	for $g_k\in\bar{\partial}r(x_k)$, $\rho>0$ and	$x\in C$.
Since $\alpha_k$ is monotonically non-decreasing and $\alpha_0 \geq \rho$, we have  
\begin{equation} \label{eqn:sto-convex-f-decrease-p2}
 \centering
  \begin{aligned}
	  \alpha_k \geq \rho, \ \text{for all} \ k\in \Nbb. \\
  \end{aligned}
\end{equation}
	The inequalities~\eqref{eqn:sto-convex-f-decrease-p1} and \eqref{eqn:sto-convex-f-decrease-p2} imply that at $d=\bar{d}_k$
\begin{equation} \label{eqn:sto-convex-f-decrease-obj}
 \centering
  \begin{aligned}
	  r(x_k) - r(x_k+\bar{d}_k) \geq& -g_{k}^T \bar{d}_k -\frac{1}{2}\rho \norm{\bar{d}_k}^2\\
			\geq& -g_{k}^T \bar{d}_k -\frac{1}{2}\alpha_k \norm{\bar{d}_k}^2
				 = \Phi_k(0) -\Phi_k(\bar{d}_k). \\
  \end{aligned}
\end{equation}

\end{proof}

To further the analysis, the estimation error bound in expectation from sampling needs to be established. 
In~\cite{liu2020nonsmooth}, with no specific form of $\bar{\phi}_k(\cdot)$, the variance of the stochastic objectives is assumed to be bounded. 
The variance bounds then comes from sample size $N_k$.
Similarly, in~\cite{berahas2021sqpsto}, an unbiased estimate with bounded variance is made on the gradient with SQP. 
In~\cite{shashaani2018astro}, the Monte Carlo estimate of the objective is assumed to be unbiased and its variance uniformly bounded. 
Given our choice of SQP, without specifying the sampling strategy, we make the assumption that the stochastic estimate of the subgradient is unbiased, and its variance is bounded. 
\begin{assumption}\label{assp:gk}
   For all iterations $k\in \Nbb$, the stochastic subgradient approximation $\bar{g}_k$ is an
    unbiased estimate of $g_k\in\bar{\partial} r(x_k)$, \textit{i.e.}, $\Ebb_k[\bar{g}_k]=g_k$.
    Furthermore, $\bar{g}_k$ satisfies
    \begin{equation} \label{assp:sto-grad}
     \centering
      \begin{aligned}
          \Ebb_k\left[\norm{\bar{g}_k-g_k}^2 \right]\leq M_k ,
      \end{aligned}
    \end{equation}
    where $M_k>0$ for all $k$ is a sequence independent of stochastic solutions. Here, $\Ebb_k$ denotes the expectation over
    $\xi$ conditioned on iterate $x_k$ at the $k$-th iteration, generated by the random vectors in
    $\{S_t\}_{t=0}^k$.
\end{assumption}
\noindent The analysis is performed under Assumption~\ref{assp:gk} throughout the rest of the section. An immediate result of Assumption~\ref{assp:gk} is given in the next lemma.
\begin{lemma}\label{lem:sto-gk-norm}
	For all $k\in \Nbb$ of Algorithm~\ref{alg:sto-convex}, $\Ebb_k\left[ \norm{\bar{g}_k-g_k} \right] \leq \sqrt{M_k} $.
\end{lemma}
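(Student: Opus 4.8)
The plan is to derive this directly from Assumption~\ref{assp:gk} by a single application of Jensen's inequality to the concave square-root function. Concretely, I would introduce the nonnegative random variable $Y = \norm{\bar{g}_k - g_k}^2$ and observe that $\norm{\bar{g}_k - g_k} = \sqrt{Y}$. Since $t \mapsto \sqrt{t}$ is concave on $[0,\infty)$, Jensen's inequality (applied to the conditional expectation $\Ebb_k$, which is a genuine expectation over $\xi$ given $x_k$) yields
\begin{equation*}
  \Ebb_k\left[\norm{\bar{g}_k - g_k}\right] = \Ebb_k\left[\sqrt{Y}\right] \leq \sqrt{\Ebb_k[Y]} = \sqrt{\Ebb_k\left[\norm{\bar{g}_k - g_k}^2\right]}.
\end{equation*}
Finally, the variance bound~\eqref{assp:sto-grad} from Assumption~\ref{assp:gk} gives $\Ebb_k[\norm{\bar{g}_k - g_k}^2] \leq M_k$, and monotonicity of the square root delivers the claimed $\Ebb_k[\norm{\bar{g}_k - g_k}] \leq \sqrt{M_k}$.

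There is no real obstacle here: the statement is an elementary consequence of the second-moment bound, and the only ingredient beyond Assumption~\ref{assp:gk} is the concavity of the square root (equivalently, the fact that $(\Ebb_k[X])^2 \leq \Ebb_k[X^2]$ for the scalar $X = \norm{\bar{g}_k - g_k} \geq 0$, which is just nonnegativity of the conditional variance). The one point worth a word of care is that $\Ebb_k$ is a conditional expectation, so I would note explicitly that Jensen's inequality holds in this conditional form; this is standard and requires no additional hypotheses beyond those already in place.
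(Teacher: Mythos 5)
Your proposal is correct and matches the paper's proof essentially verbatim: the paper likewise derives $\Ebb_k[\norm{\bar{g}_k-g_k}] \leq (\Ebb_k[\norm{\bar{g}_k-g_k}^2])^{1/2} \leq \sqrt{M_k}$ by Jensen's inequality together with Assumption~\ref{assp:gk}. Your phrasing via concavity of the square root and the paper's phrasing via convexity of the square are the same inequality, so there is nothing to add.
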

\begin{proof}
	From Assumption~\ref{assp:gk} and Jensen's inequality (with square function),  
   \begin{equation} \label{eqn:gk-diff}
     \centering
     \begin{aligned}
	     \Ebb_k[\norm{\bar{g}_k-g_k}] \leq \left( \Ebb_k \left[\norm{\bar{g}_k-g_k}^2 \right]\right)^{\frac{1}{2}} \leq \sqrt{M_k}.
     \end{aligned}
   \end{equation}
\end{proof}
Assumption~\ref{assp:gk} leads to the bounds on the variance of the stochastic search directions.
\begin{lemma}\label{lem:sto-convex-dk-diff}
  For all $k\in \Nbb$ of Algorithm~\ref{alg:sto-convex}, $\Ebb_k[\norm{\bar{d}_k-d_k}] \leq \frac{1}{\alpha_k}\sqrt{M_k}$, and 
	$\Ebb_k[\norm{\bar{d}_k-d_k}^2] \leq \frac{1}{\alpha_k^2} M_k$.
\end{lemma}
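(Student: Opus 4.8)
The plan is to reduce the statement to a single deterministic, pathwise inequality, $\norm{\bar{d}_k - d_k} \le \frac{1}{\alpha_k}\norm{\bar{g}_k - g_k}$, and then take conditional expectations, at which point the two claims follow immediately from Lemma~\ref{lem:sto-gk-norm} and from~\eqref{assp:sto-grad} in Assumption~\ref{assp:gk}, respectively. Thus essentially all the work is in the pathwise bound, which I would extract directly from the first-order optimality systems~\eqref{eqn:sto-convex-KKT} and~\eqref{eqn:true-convex-KKT}.

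To do so, I first recast each optimality system as a variational inequality over $C$. Because $C$ is convex (Assumption~\ref{assp:boundedHc}), $\bar{v}_k \in \bar{\partial} i_C(x_k + \bar{d}_k)$ lies in the normal cone to $C$ at $x_k + \bar{d}_k$, so the identity $\bar{g}_k + \alpha_k \bar{d}_k = -\bar{v}_k$ is equivalent to $\langle \bar{g}_k + \alpha_k \bar{d}_k, \, y - (x_k + \bar{d}_k)\rangle \ge 0$ for all $y \in C$, and likewise $\langle g_k + \alpha_k d_k, \, y - (x_k + d_k)\rangle \ge 0$ for all $y \in C$. Testing the first inequality at the feasible point $y = x_k + d_k$ and the second at $y = x_k + \bar{d}_k$ and adding, the indicator terms are eliminated and the $\alpha_k$ contribution collapses to $-\alpha_k\norm{\bar{d}_k - d_k}^2$, yielding $\alpha_k \norm{\bar{d}_k - d_k}^2 \le \langle g_k - \bar{g}_k, \, \bar{d}_k - d_k\rangle$. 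Cauchy--Schwarz on the right-hand side and division by $\norm{\bar{d}_k - d_k}$ (the case $\bar{d}_k = d_k$ being trivial) then give the pathwise bound.

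Taking $\Ebb_k$ of the pathwise bound and applying Lemma~\ref{lem:sto-gk-norm} gives the first estimate $\Ebb_k[\norm{\bar{d}_k - d_k}] \le \frac{1}{\alpha_k}\sqrt{M_k}$; squaring the pathwise bound before taking $\Ebb_k$ and invoking~\eqref{assp:sto-grad} gives the second estimate $\Ebb_k[\norm{\bar{d}_k - d_k}^2] \le \frac{1}{\alpha_k^2} M_k$. I do not anticipate a genuine obstacle: the one point requiring care is the cancellation of the normal-cone terms when the two variational inequalities are combined, which is exactly where convexity of $C$ enters (equivalently, one may observe that $x_k + \bar{d}_k$ and $x_k + d_k$ are the Euclidean projections onto $C$ of $x_k - \frac{1}{\alpha_k}\bar{g}_k$ and $x_k - \frac{1}{\alpha_k}g_k$, so that the pathwise bound is just nonexpansiveness of the projection). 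Everything else is Cauchy--Schwarz together with the two preceding results.
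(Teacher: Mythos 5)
Your proposal is correct and follows essentially the same route as the paper: the paper subtracts the two optimality systems, takes the inner product with $\bar{d}_k-d_k$, and uses monotonicity of $\bar{\partial} i_C(\cdot)$ to get $(\bar{v}_k-v_k)^T(\bar{d}_k-d_k)\geq 0$, which is exactly the cancellation of normal-cone terms you obtain by cross-testing the two variational inequalities. The remaining steps (Cauchy--Schwarz, division by $\norm{\bar{d}_k-d_k}$, then conditional expectation via Lemma~\ref{lem:sto-gk-norm} and Assumption~\ref{assp:gk}) match the paper's proof exactly.
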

\begin{proof}
	From the definition, $d_k$ and $\bar{d}_k$ are the solutions to~\eqref{eqn:opt-true-convex} and~\eqref{eqn:opt-sto-convex}, respectively. 
	Subtracting the optimality conditions~\eqref{eqn:true-convex-KKT} from~\eqref{eqn:sto-convex-KKT} leads to 	
   \begin{equation} \label{eqn:sto-convex-dk-diff-1}
     \centering
     \begin{aligned}
	     \bar{g}_k-g_k + \alpha_k(\bar{d}_k- d_k) + \bar{v}_k -v_k =0.\\
     \end{aligned}
   \end{equation}
   Taking dot product of~\eqref{eqn:sto-convex-dk-diff-1} with $\bar{d}_k-d_k$ and we obtain
   \begin{equation} \label{eqn:sto-convex-dk-diff-2}
     \centering
     \begin{aligned}
	     (\bar{g}_k-g_k)^T(\bar{d}_k-d_k) + \alpha_k \norm{\bar{d}_k- d_k}^2 + (\bar{v}_k-v_k)^T (\bar{d}_k-d_k) =0.\\
     \end{aligned}
   \end{equation}
    Given that $x_k+d_k \in C$ and $x_k+\bar{d}_k \in C$, the convexity of $i_C(\cdot)$ implies 
   \begin{equation} \label{eqn:sto-convex-dk-diff-3}
     \centering
     \begin{aligned}
	     \left(\bar{v}_k -v_k \right)^T(\bar{d}_k-d_k) \geq 0.\\
     \end{aligned}
   \end{equation}
   Applying~\eqref{eqn:sto-convex-dk-diff-3} to~\eqref{eqn:sto-convex-dk-diff-2} leads to 
   \begin{equation} \label{eqn:sto-convex-dk-diff-4}
     \centering
     \begin{aligned}
	     \alpha_k \norm{\bar{d}_k- d_k}^2 &\leq -(\bar{g}_k-g_k)^T(\bar{d}_k-d_k) 
	     \leq \norm{\bar{g}_k-g_k}\norm{\bar{d}_k-d_k}.
     \end{aligned}
   \end{equation}
	Notice that $(\bar{g}_k-g_k)^T(\bar{d}_k-d_k)\leq 0$ from~\eqref{eqn:sto-convex-dk-diff-4} and 
  \begin{equation} \label{eqn:sto-convex-dk-diff-5}
     \centering
     \begin{aligned}
	     \norm{\bar{d}_k-d_k} \leq \frac{1}{\alpha_k} \norm{\bar{g}_k-g_k}.
     \end{aligned}
   \end{equation}
	Taking $\Ebb_k$ on~\eqref{eqn:sto-convex-dk-diff-5} as well as~\eqref{eqn:sto-convex-dk-diff-4} and applying Lemma~\ref{lem:sto-gk-norm} completes the proof.
\end{proof}
The expectation of change in the objective $r(\cdot)$ observes the following Lemma.
\begin{lemma}\label{lem:sto-convex-exp-decrease}
	The step $x_{k+1} = x_k+\bar{d}_k$ satisfies, for constant $c_r>0$, 
  \begin{equation} \label{eqn:sto-convex-exp-0}
    \centering
     \begin{aligned}
	\Ebb_k[r(x_k) - r(x_{k+1})] \geq \frac{1}{2}\Ebb_k[\alpha_k\norm{x_{k+1}-x_k}^2] -c_r M_k. 
    \end{aligned}
    \end{equation}
\end{lemma}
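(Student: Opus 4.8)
The plan is to start from Lemma~\ref{lem:sto-convex-f-decrease}, which already delivers $r(x_k) - r(x_{k+1}) \geq \Phi_k(0) - \Phi_k(\bar{d}_k)$, and to extract the target term $\tfrac{1}{2}\alpha_k\norm{\bar{d}_k}^2$ from the right-hand side. Since $x_{k+1} - x_k = \bar{d}_k$ and $\Phi_k(0) - \Phi_k(\bar{d}_k) = -g_k^T\bar{d}_k - \tfrac{1}{2}\alpha_k\norm{\bar{d}_k}^2$, I would rewrite this as
\[
\Phi_k(0) - \Phi_k(\bar{d}_k) = \tfrac{1}{2}\alpha_k\norm{\bar{d}_k}^2 - \bigl(g_k^T\bar{d}_k + \alpha_k\norm{\bar{d}_k}^2\bigr),
\]
so that the entire lemma reduces to bounding the correction term $g_k^T\bar{d}_k + \alpha_k\norm{\bar{d}_k}^2$ from above, in conditional expectation, by $c_r M_k$.

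Next I would control this correction term via the optimality conditions of the stochastic subproblem. From~\eqref{eqn:sto-convex-KKT} we have $\alpha_k\bar{d}_k = -\bar{g}_k - \bar{v}_k$, and substituting $\alpha_k\norm{\bar{d}_k}^2 = \bar{d}_k^T(\alpha_k\bar{d}_k)$ yields
\[
g_k^T\bar{d}_k + \alpha_k\norm{\bar{d}_k}^2 = -\bar{v}_k^T\bar{d}_k - (\bar{g}_k - g_k)^T\bar{d}_k.
\]
Because $\bar{v}_k \in \bar{\partial} i_C(x_k+\bar{d}_k)$ while $x_k \in C$, the subgradient inequality for the convex indicator evaluated at the comparison point $x_k$ gives $\bar{v}_k^T\bar{d}_k \geq 0$; hence the first term can be discarded, leaving $g_k^T\bar{d}_k + \alpha_k\norm{\bar{d}_k}^2 \leq -(\bar{g}_k - g_k)^T\bar{d}_k$.

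The crucial step, and the one I expect to be the main obstacle, is obtaining a bound of order $M_k$ rather than the $\sqrt{M_k}$ that a direct Cauchy--Schwarz estimate on $\norm{\bar{g}_k - g_k}\norm{\bar{d}_k}$ would produce (note $\norm{\bar{d}_k}$ is bounded by the diameter of the compact $C$, but combined with Lemma~\ref{lem:sto-gk-norm} this only gives $O(\sqrt{M_k})$). The resolution is to center $\bar{d}_k$ at the \emph{deterministic} true solution $d_k$: writing $\bar{d}_k = d_k + (\bar{d}_k - d_k)$ splits the cross term into $-(\bar{g}_k - g_k)^T d_k$ and $-(\bar{g}_k - g_k)^T(\bar{d}_k - d_k)$. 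Taking $\Ebb_k$, the first piece vanishes, since $d_k$ is deterministic given $x_k$ and $\Ebb_k[\bar{g}_k] = g_k$ by the unbiasedness of Assumption~\ref{assp:gk}. For the second piece I would apply Cauchy--Schwarz with the estimate $\norm{\bar{d}_k - d_k} \leq \tfrac{1}{\alpha_k}\norm{\bar{g}_k - g_k}$ from~\eqref{eqn:sto-convex-dk-diff-5}, giving $-(\bar{g}_k - g_k)^T(\bar{d}_k - d_k) \leq \tfrac{1}{\alpha_k}\norm{\bar{g}_k - g_k}^2$; taking expectations, using $\Ebb_k[\norm{\bar{g}_k - g_k}^2] \leq M_k$ and $\alpha_k \geq \rho$, bounds this by $\tfrac{1}{\rho}M_k$. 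Assembling the pieces then establishes the claim with $c_r = \tfrac{1}{\rho}$.
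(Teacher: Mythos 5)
Your proof is correct and follows essentially the same route as the paper's: both arguments use the subproblem optimality conditions together with the convexity of $i_C$ to discard the $\bar{v}_k^T\bar{d}_k$ term, center $\bar{d}_k$ at the deterministic $d_k$ so that unbiasedness kills the cross term $(\bar{g}_k-g_k)^Td_k$ in conditional expectation, and bound the remainder by $O(M_k)$ via $\norm{\bar{d}_k-d_k}\leq \frac{1}{\alpha_k}\norm{\bar{g}_k-g_k}$. The only difference is cosmetic bookkeeping: the paper applies Young's inequality to $\norm{\bar{g}_k-g_k}\norm{\bar{d}_k-d_k}$ and obtains $c_r=\frac{1}{2}(1+\frac{1}{\alpha_0^2})$, whereas you substitute the bound directly and obtain $c_r=\frac{1}{\rho}$, both of which are valid constants.
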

\begin{proof}
	From Lemma~\ref{lem:sto-convex-f-decrease} and~\eqref{eqn:sto-convex-f-decrease-obj}, we have  
\begin{equation} \label{eqn:sto-convex-KKT-0}
 \centering
  \begin{aligned}
	  r(x_k) - r(x_{k+1}) \geq& - g_{k}^T \bar{d}_k -\frac{1}{2}\alpha_k \norm{\bar{d}_k}^2. \\
  \end{aligned}
\end{equation}
	By the optimality conditions~\eqref{eqn:sto-convex-KKT}, we have  
\begin{equation} \label{eqn:sto-convex-KKT-1}
	\bar{g}_k + \alpha_k \bar{d}_k =  -\bar{v}_k, \ \bar{v}_k \in \bar{\partial} i_C(x_k+\bar{d}_k) .
 \end{equation}
	Taking the inner product with $-\bar{d}_k$ in~\eqref{eqn:sto-convex-KKT-1} and using the convexity of $i_C(\cdot)$, we have 
\begin{equation} \label{eqn:sto-convex-KKT-2}
  \centering
   \begin{aligned}
	   - \bar{g}_k^T \bar{d}_k-\alpha_k\norm{\bar{d}_k}^2 &=  \bar{v}_k^T \bar{d}_k
	   = i_C(x_k)-i_C(x_k+\bar{d}_k) - \bar{v}_k^T(-\bar{d}_k)
	   \geq 0.
   \end{aligned}
 \end{equation}
	Thus, 
\begin{equation} \label{eqn:sto-convex-KKT-2.5}
  \centering
   \begin{aligned}
	   - \bar{g}_k^T \bar{d}_k-\frac{1}{2}\alpha_k\norm{\bar{d}_k}^2\geq \frac{1}{2}\alpha_k\norm{\bar{d}_k}^2.
   \end{aligned}
 \end{equation}
	The right-hand side of~\eqref{eqn:sto-convex-KKT-0} can be rewritten based on~\eqref{eqn:sto-convex-KKT-2.5} as
\begin{equation} \label{eqn:sto-convex-KKT-3}
  \centering
   \begin{aligned}
	   -g_k^T \bar{d}_k-\frac{1}{2}\alpha_k \norm{\bar{d}_k}^2&= 
	       -g_k^T \bar{d}_k+\bar{g}_k^T \bar{d}_k-\bar{g}_k^T \bar{d}_k  -\frac{1}{2}\alpha_k \norm{\bar{d}_k}^2\\
	   &\geq \left(\bar{g}_k-g_k \right)^Td_k +\left( \bar{g}_k-g_k\right)^T\left(\bar{d}_k-d_k\right) +\frac{1}{2}\alpha_k \norm{\bar{d}_k}^2 \\
	   &\geq \left(\bar{g}_k-g_k \right)^Td_k - \norm{\bar{g}_k-g_k}\norm{\bar{d}_k-d_k}+\frac{1}{2}\alpha_k\norm{\bar{d}_k}^2\\
	   &\geq \left(\bar{g}_k-g_k \right)^Td_k - \frac{1}{2}(\norm{\bar{g}_k-g_k}^2+\norm{\bar{d}_k-d_k}^2)+\frac{1}{2}\alpha_k\norm{\bar{d}_k}^2.
   \end{aligned}
 \end{equation}
	By Assumption~\ref{assp:gk}, Lemma~\ref{lem:sto-gk-norm} and Lemma~\ref{lem:sto-convex-dk-diff}, the expectation $\Ebb_k$ of~\eqref{eqn:sto-convex-KKT-3} is 
	\begin{equation} \label{eqn:sto-convex-KKT-6}
  \centering
   \begin{aligned}
	   \Ebb_k\left[-g_k^T \bar{d}_k-\frac{1}{2}\alpha_k \norm{\bar{d}_k}^2 \right]
	     \geq& -\frac{1}{2}(1+\frac{1}{\alpha_k^2}) M_k+\Ebb_k\left[\frac{1}{2}\alpha_k\norm{\bar{d}_k}^2\right].
   \end{aligned}
 \end{equation}
   Let $c_r = \frac{1}{2}(1+\frac{1}{\alpha_0^2})$. From~\eqref{eqn:sto-convex-KKT-0} and~\eqref{eqn:sto-convex-KKT-6}, the proof is complete. 
\end{proof}
From Lemma~\ref{lem:sto-convex-exp-decrease}, it is clear that to obtain a convergent step $\bar{d}_k\to 0$, the sequence $M_k$ needs to be controlled so that the right-hand side of~\eqref{eqn:sto-convex-exp-0} is finite in summation. 
Fortunately, this can achieved as standard practice through increasing the sample size $N_k=|S_k|$ as $k\to\infty$.
We present the convergence result in the following theorem.
\begin{theorem}\label{thm:sto-convex-convg}
  If the sequence $M_k$ satisfies    
        $\sum_{k=0}^{\infty} M_k < \infty$,   
       then 
\begin{equation} \label{eqn:sto-convg-limit} 
 \centering
  \begin{aligned}
	  \lim_{k\to\infty} \Ebb\left[\sum_{i=0}^{k-1} \norm{x_{i+1}-x_i}^2\right] < \infty.
  \end{aligned}
\end{equation}
	It follows that $\lim_{k\to\infty} \Ebb\left[\norm{\bar{d}_k}\right] = 0$ and $\lim_{k\to\infty} \Ebb\left[ \norm{\bar{g}_k + \bar{v}_k }  \right] = 0$.
\end{theorem}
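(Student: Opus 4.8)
The plan is to turn the per-iteration expected decrease of Lemma~\ref{lem:sto-convex-exp-decrease} into a telescoping bound by taking total expectations and summing. First I would apply the full expectation $\Ebb$ to the inequality in Lemma~\ref{lem:sto-convex-exp-decrease}; by the tower property $\Ebb[\Ebb_k[\cdot]]=\Ebb[\cdot]$, together with $\alpha_k\geq\rho$ from~\eqref{eqn:sto-convex-f-decrease-p2} and $x_{k+1}-x_k=\bar{d}_k$, this gives
\begin{equation}
	\Ebb[r(x_k)-r(x_{k+1})]\geq \frac{\rho}{2}\Ebb[\norm{\bar{d}_k}^2]-c_r M_k .
\end{equation}
Summing this from $k=0$ to $K-1$ makes the left-hand side telescope to $r(x_0)-\Ebb[r(x_K)]$.

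The next step is to bound the telescoped quantity from above. Assumption~\ref{assp:boundedHc}, together with Lipschitz continuity and Assumption~\ref{assp:upperC2}, guarantees that $r$ is bounded below on $C$ by some $r_{\min}$, so $\Ebb[r(x_K)]\geq r_{\min}$ and hence $r(x_0)-\Ebb[r(x_K)]\leq r(x_0)-r_{\min}$. Rearranging the summed inequality yields
\begin{equation}
	\frac{\rho}{2}\sum_{k=0}^{K-1}\Ebb[\norm{\bar{d}_k}^2]\leq r(x_0)-r_{\min}+c_r\sum_{k=0}^{K-1}M_k .
\end{equation}
Letting $K\to\infty$ and invoking the hypothesis $\sum_{k=0}^{\infty}M_k<\infty$ shows that $\sum_{k=0}^{\infty}\Ebb[\norm{\bar{d}_k}^2]$ is finite. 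Since $\norm{x_{i+1}-x_i}=\norm{\bar{d}_i}$ and the summands are nonnegative, I may exchange expectation and summation (Tonelli/monotone convergence) to identify this with the limit in~\eqref{eqn:sto-convg-limit}, establishing the first claim.

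For the two consequences, finiteness of $\sum_{k}\Ebb[\norm{\bar{d}_k}^2]$ forces $\Ebb[\norm{\bar{d}_k}^2]\to 0$, and Jensen's inequality then gives $\Ebb[\norm{\bar{d}_k}]\leq(\Ebb[\norm{\bar{d}_k}^2])^{1/2}\to 0$, which is the second claim. Finally, the optimality condition~\eqref{eqn:sto-convex-KKT} gives $\bar{g}_k+\bar{v}_k=-\alpha_k\bar{d}_k$, so using $\alpha_k\leq\eta_{\alpha}\rho$ for all $k$ (step 7 of Algorithm~\ref{alg:sto-convex}) I obtain $\Ebb[\norm{\bar{g}_k+\bar{v}_k}]=\Ebb[\alpha_k\norm{\bar{d}_k}]\leq\eta_{\alpha}\rho\,\Ebb[\norm{\bar{d}_k}]\to 0$.

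I do not anticipate a serious obstacle: the argument is the standard telescoping used in stochastic descent proofs. The two points requiring care are the interchange of expectation with the infinite sum, justified by nonnegativity of the summands, and the explicit use of the two-sided bound $\rho\leq\alpha_k\leq\eta_{\alpha}\rho$. The lower bound $\alpha_k\geq\rho$ is what converts expected objective decrease into control of $\Ebb[\norm{\bar{d}_k}^2]$, while the upper bound $\alpha_k\leq\eta_{\alpha}\rho$ is what transfers the vanishing step to the vanishing stationarity residual $\bar{g}_k+\bar{v}_k$.
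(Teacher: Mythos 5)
Your proposal is correct and follows essentially the same route as the paper's proof: take total expectations of Lemma~\ref{lem:sto-convex-exp-decrease}, telescope the objective differences, bound below by $r_{\min}$, and use the summability of $\{M_k\}$ to conclude, then Jensen's inequality and the optimality condition~\eqref{eqn:sto-convex-KKT} for the two consequences. The only cosmetic difference is that the paper uses the lower bound $\alpha_k\geq\alpha_0$ where you use $\alpha_k\geq\rho$; both are valid since $\alpha_0\geq\rho$.
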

\begin{proof}
	Taking the total expectation of~\eqref{eqn:sto-convex-exp-0} in Lemma~\ref{lem:sto-convex-exp-decrease}, for all $k\in \Nbb$,  
  \begin{equation} \label{eqn:sto-convex-convg-1}
   \centering
    \begin{aligned}
	    \Ebb[r(x_{k+1})-r(x_k)] \leq -\frac{1}{2}\Ebb[\alpha_k \norm{\bar{d}_k}^2] + \Ebb\left[c_r M_k\right]
                     \leq -\frac{1}{2}\alpha_0 \Ebb\left[\norm{\bar{d}_k}^2\right] + c_r M_k, \\
    \end{aligned}
  \end{equation}
    since $\alpha_k$ is monotonically non-decreasing and bounded.
    Summing up $i=0,1,\dots,k-1$ of $r(x_{i+1})-r(x_i)$ and taking the total expectation, we have
  \begin{equation} \label{eqn:sto-convex-convg-2}
   \centering
    \begin{aligned}
	    -\infty < r_{\min} - r(x_0) \leq& \Ebb\left[r(x_k) - r(x_0) \right]
	    = \Ebb \left[\sum_{i=0}^{k-1} \left(r(x_{i+1})-r(x_i) \right)   \right]\\
	    \leq& -\frac{1}{2}\alpha_0 \Ebb \left[ \sum_{i=0}^{k-1} \norm{x_{i+1}-x_i}^2 \right] + c_r \sum_{i=0}^{k-1} M_i.\\
    \end{aligned}
  \end{equation}
   Let $b=r(x_0)-r_{\min}$. Then, from the condition of the theorem,  
  \begin{equation} \label{eqn:sto-convex-convg-3}
   \centering
    \begin{aligned}
     \lim_{k\to\infty}\frac{1}{2}  \alpha_0 \Ebb \left[ \sum_{i=0}^{k-1} \norm{x_{i+1}-x_i}^2 \right] \leq  b + c_r \sum_{k=0}^{\infty} M_k <  \infty.
    \end{aligned}
  \end{equation}
	The first part of the theorem is proven.
	Further, $\lim_{k\to\infty} \Ebb[\norm{\bar{d}_k}^2] = 0$. 
        By Jensen's inequality, $\lim_{k\to\infty} \Ebb[\norm{\bar{d}_k}] = 0$.
	By first equation in~\eqref{eqn:sto-convex-KKT}, the last result is proved.
\end{proof} 
It is possible to further show subsequential convergence of accumulation points of $\{x_k\}$.
\begin{theorem}\label{thm:sto-convex-convg-KKT}
	Under the conditions of Theorem~\ref{thm:sto-convex-convg}, any accumulation point of the sequence $\{x_k\}$
	produced by Algorithm~\ref{alg:sto-convex} is a KKT point of~\eqref{eqn:opt-prob-sto-convex} with probability 1.
\end{theorem}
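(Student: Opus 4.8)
The plan is to upgrade the in-expectation convergence of Theorem~\ref{thm:sto-convex-convg} to almost-sure convergence on a single probability-one event, and then run a deterministic outer-semicontinuity argument on that event. First I would note that $\sum_{i=0}^{k-1}\norm{x_{i+1}-x_i}^2$ is a monotone sequence of nonnegative partial sums, so the finiteness of its expected limit in~\eqref{eqn:sto-convg-limit}, via the monotone convergence theorem, yields $\Ebb[\sum_{i=0}^{\infty}\norm{\bar{d}_i}^2]<\infty$; hence $\sum_{i=0}^{\infty}\norm{\bar{d}_i}^2<\infty$ and in particular $\bar{d}_k\to 0$ with probability~$1$. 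Applying the same reasoning to Assumption~\ref{assp:gk}, whose conditional bound $\Ebb_k[\norm{\bar{g}_k-g_k}^2]\le M_k$ gives $\sum_k\Ebb[\norm{\bar{g}_k-g_k}^2]\le\sum_k M_k<\infty$ by the theorem's hypothesis, I obtain $\sum_k\norm{\bar{g}_k-g_k}^2<\infty$ and thus $\norm{\bar{g}_k-g_k}\to0$ with probability~$1$.

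Next I would combine these two almost-sure limits with the subproblem optimality condition~\eqref{eqn:sto-convex-KKT}, namely $\bar{g}_k+\alpha_k\bar{d}_k+\bar{v}_k=0$. Since $\alpha_k\le\eta_\alpha\rho$ is bounded and $\bar{d}_k\to0$ almost surely, the term $\alpha_k\bar{d}_k\to0$ almost surely, so $\bar{g}_k+\bar{v}_k=-\alpha_k\bar{d}_k\to0$ almost surely; combining with $\bar{g}_k-g_k\to0$ then gives $g_k+\bar{v}_k\to0$ almost surely, where $g_k\in\bar{\partial}r(x_k)$ is now a genuine subgradient and $\bar{v}_k\in\bar{\partial}i_C(x_k+\bar{d}_k)$. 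Let $E$ be the probability-one event on which $\bar{d}_k\to0$, $\norm{\bar{g}_k-g_k}\to0$, and $g_k+\bar{v}_k\to0$ all hold, and fix a sample path $\omega\in E$.

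Finally, let $\bar{x}$ be any accumulation point of the now-deterministic sequence $\{x_k\}$, with $x_{k_j}\to\bar{x}$; since $C$ is closed and $x_{k_j}\in C$, we have $\bar{x}\in C$. Because $r$ is Lipschitz, $\bar{\partial}r$ is locally bounded~\cite[Theorem~9.13]{rockafellar1998}, so the subgradients $\{g_{k_j}\}$ lying eventually in a neighborhood of $\bar{x}$ are bounded; passing to a further subsequence I may assume $g_{k_j}\to\bar{g}$. Outer semicontinuity of the Clarke subdifferential~\cite[Proposition~6.6]{rockafellar1998} then gives $\bar{g}\in\bar{\partial}r(\bar{x})$. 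From $g_{k_j}+\bar{v}_{k_j}\to0$ it follows that $\bar{v}_{k_j}\to-\bar{g}$, and since $x_{k_j}+\bar{d}_{k_j}\to\bar{x}$ (using $\bar{d}_{k_j}\to0$) with $\bar{v}_{k_j}\in\bar{\partial}i_C(x_{k_j}+\bar{d}_{k_j})$, the same outer-semicontinuity argument applied to the convex indicator yields $-\bar{g}\in\bar{\partial}i_C(\bar{x})$. Adding the two memberships gives $0\in\bar{\partial}r(\bar{x})+\bar{\partial}i_C(\bar{x})$ with $\bar{x}\in C$, which is the KKT system~\eqref{eqn:opt-sto-KKT} specialized to~\eqref{eqn:opt-prob-sto-convex} (no equality constraints). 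Since this holds for every $\omega\in E$ and $\Pbb(E)=1$, every accumulation point is a KKT point with probability~$1$.

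The step I expect to be the main obstacle is the passage from the in-expectation guarantees of Theorem~\ref{thm:sto-convex-convg} to almost-sure statements, compounded by the fact that the accumulation point $\bar{x}$ is itself a random, path-dependent object. The resolution is that the summability of $\Ebb[\norm{\bar{d}_k}^2]$ and of $M_k$ lets monotone convergence confine all the required convergences to a single full-measure event $E$ chosen once and for all; on each path in $E$ the remaining argument is purely deterministic, and outer semicontinuity together with local boundedness of $\bar{\partial}r$ handles every accumulation point of that path uniformly.
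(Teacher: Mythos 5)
Your proof is correct and follows essentially the same route as the paper: upgrade the summable second moments of $\norm{\bar{d}_k}$ and $\norm{\bar{g}_k-g_k}$ to almost-sure convergence on a single full-measure event, then run the deterministic subsequence argument using local boundedness of $\bar{\partial}r$ and outer semicontinuity of the Clarke subdifferentials of $r$ and $i_C$. The only (harmless) difference is that you obtain the almost-sure limits via monotone convergence and the a.s.\ finiteness of a nonnegative random variable with finite expectation, whereas the paper uses a Chebyshev/Borel--Cantelli-style contradiction; both are standard and yield the same event $E$.
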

\begin{proof}
	We first show that $\lim_{k\to\infty} \bar{d}_k =0$ with probability 1.
   From Theorem~\ref{thm:sto-convex-convg}, we know 
	    $\sum_{k=0}^{\infty} \Ebb[\norm{\bar{d}_k}^2]$ is finite and $\lim_{k\to\infty} \Ebb[\bar{d}_k] =0$.
    We proceed by contradiction. Suppose there exists $\epsilon>0$ and $a>0$ such that  
  \begin{equation} \label{eqn:sto-convex-convg-KKT-2}
   \centering
    \begin{aligned}
	    \Pbb (\limsup_{k\to\infty} \norm{\bar{d}_k}\geq\epsilon) \geq a. 
    \end{aligned}
  \end{equation}
	By Chebyshev's inequality, we have $\Pbb(\norm{\bar{d}_k}\geq \epsilon) \leq \frac{\Ebb[\norm{\bar{d}_k}^2]}{\epsilon^2}$.
	Since $\Ebb[\norm{\bar{d}_k}^2]$ is finitely summable, there exists $N>0$ such that 
	$\sum_{k=N}^{\infty} \Pbb(\norm{\bar{d}_k}\geq \epsilon) \leq \sum_{k=N}^{\infty}\frac{\Ebb[\norm{\bar{d}_k}^2]}{\epsilon^2}<a$.
	Therefore, 
  \begin{equation} \label{eqn:sto-convex-convg-KKT-3}
   \centering
    \begin{aligned}
	    \Pbb\left( \limsup_{k\to\infty} \norm{\bar{d}_k} \geq \epsilon \right) = \Pbb \left(\limsup_{k\to\infty:k\geq N} \norm{\bar{d}_k}\geq \epsilon\right)
	    \leq \sum_{k=N}^{\infty} \Pbb(\norm{\bar{d}_k}\geq \epsilon) < a.
    \end{aligned}
  \end{equation}
	This is a contradiction against~\eqref{eqn:sto-convex-convg-KKT-2}. Hence, we have 
	$\lim_{k\to\infty} \norm{\bar{d}_k} = 0$ with probability 1. 
        Additionally, under the condition of Theorem~\ref{thm:sto-convex-convg},  
  \begin{equation} \label{eqn:sto-convex-convg-KKT-4}
   \centering
    \begin{aligned}
        \sum_{k=0}^{\infty} \Ebb[\norm{\bar{g}_k-g_k}^2]< \sum_{k=0}^{\infty} M_k < \infty. 
   \end{aligned}
  \end{equation} 
   Using the same contradiction argument, we have $\lim_{k\to\infty} \bar{g}_k-g_k = 0$ with probability 1.
 
Let $\bar{x}$ be an accumulation point of $\{x_k\}$. Then, passing on to a subsequence if necessary, we can assume $\lim_{k\to\infty} x_k = \bar{x}$, where $\bar{x}\in C$. 
   From~\eqref{eqn:sto-convex-KKT}, we have 
  \begin{equation} \label{eqn:sto-convex-convg-KKT-5}
   \centering
    \begin{aligned}
        \bar{g}_k + \alpha_k \bar{d_k} + \bar{v}_k = g_k + (\bar{g}_k -g_k) + \alpha_k \bar{d}_k + \bar{v}_k = 0.
    \end{aligned}
  \end{equation}
Since $g_k\in \bar{\partial}r(x_k)$ is bounded, there exists at least one accumulation point for $\{g_k\}$. Passing on further to a subsequence if necessary, we may assume $g_k\to\bar{g}$. By the outer semicontinuity of Clark subdifferential, we have $\bar{g} \in \bar{\partial} r(\bar{x})$. 
 Therefore,~\eqref{eqn:sto-convex-convg-KKT-5} implies that with probability 1, 
  $\lim_{k\to\infty} \bar{v}_k = -\bar{g}$. 
  Since $\bar{v}_k \in \bar{\partial}i_C(x_{k}+\bar{d}_k)$, by the outer semicontinuity of $\bar{\partial} i_C(\cdot)$, $-\bar{g}\in\bar{\partial} i_C(\bar{x})$.   
   Thus, $0 \in \bar{\partial} r(\bar{x}) + \bar{\partial} i_C(\bar{x})$,
	and $\bar{x}$ is a KKT point with probability 1.
\end{proof}
Similar convergence results can be found in~\cite{liu2020nonsmooth,boob2022}.
The proofs of Theorem~\ref{thm:sto-convex-convg} and~\ref{thm:sto-convex-convg-KKT} can be simplified using the well-known super-martingale convergence theorem~\cite{robbins1971} in the form of the following Lemma.
\begin{lemma}\label{lem:super-martingale-thm}
	Let $(\Omega,\mathcal{F},\Pbb)$ be a probability space and $\mathcal{F}_{k-1}\subset\mathcal{F}_k$ a sequence of sub-$\sigma$-algebra of $\mathcal{F}$. 
	Let $\{y_k\}$,$\{u_k\}$,$\{a_k\}$,$\{b_k\}$ be sequences of nonnegative integrable random variables under $\mathcal{F}_k$, such that for all $k\in\Nbb$, $\Ebb[y_{k+1}|\mathcal{F}_k]\leq (1+a_k)y_k-u_k+b_k$, $\sum_{k=1}^{\infty}a_k<+\infty$, and 
	$\sum_{k=1}^{\infty}b_k<+\infty$. Then with probability 1, $\{y_k\}$ converges and $\sum_{k=1}^{\infty}u_k<+\infty$.
\end{lemma}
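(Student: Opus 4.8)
The plan is to reduce the statement to the classical convergence theorem for nonnegative supermartingales by absorbing the multiplicative factor $(1+a_k)$ and the additive perturbation $b_k$ into an auxiliary process. First I would exploit $\sum_{k=1}^{\infty}a_k<\infty$ to define the partial products $\Pi_k=\prod_{j=1}^{k-1}(1+a_j)$ with $\Pi_1=1$. Since $1\le \Pi_k\le \exp(\sum_j a_j)<\infty$, the sequence $\{\Pi_k\}$ is nondecreasing and bounded, converging to a finite limit $\Pi_\infty\ge 1$. Rescaling by $\Pi_{k+1}=(1+a_k)\Pi_k$ and setting $\tilde y_k=y_k/\Pi_k$, $\tilde u_k=u_k/\Pi_{k+1}$, $\tilde b_k=b_k/\Pi_{k+1}$, dividing the hypothesis $\Ebb[y_{k+1}\mid\mathcal{F}_k]\le (1+a_k)y_k-u_k+b_k$ by $\Pi_{k+1}$ yields the clean recursion $\Ebb[\tilde y_{k+1}\mid\mathcal{F}_k]\le \tilde y_k-\tilde u_k+\tilde b_k$, with the multiplicative factor removed. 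Because $\Pi_k\ge 1$, the rescaled perturbations remain summable, $\sum_k\tilde b_k\le\sum_k b_k<\infty$, and all three rescaled sequences stay $\mathcal{F}_k$-adapted and nonnegative.

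Next I would build a supermartingale from this recursion. Defining $z_k=\tilde y_k+\sum_{j=1}^{k-1}\tilde u_j-\sum_{j=1}^{k-1}\tilde b_j$, a direct computation using the recursion (the summed terms through index $k$ are $\mathcal{F}_k$-measurable) gives $\Ebb[z_{k+1}\mid\mathcal{F}_k]\le z_k$, so $\{z_k\}$ is a supermartingale. It is bounded below by $-\sum_{j=1}^{\infty}\tilde b_j$, which is finite; shifting by this quantity produces the nonnegative supermartingale $w_k=z_k+\sum_{j=1}^{\infty}\tilde b_j$. Applying Doob's convergence theorem for nonnegative supermartingales, $w_k$, and hence $z_k$, converges almost surely to a finite limit.

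Finally I would disentangle the three pieces of $z_k$. The term $\sum_{j=1}^{k-1}\tilde b_j$ converges to a finite limit, so the almost-sure convergence of $z_k$ forces $\tilde y_k+\sum_{j=1}^{k-1}\tilde u_j$ to converge. Since $\tilde u_j\ge 0$, the partial sums $S_k=\sum_{j=1}^{k-1}\tilde u_j$ are nondecreasing, and nonnegativity of $\tilde y_k$ gives $S_k\le \tilde y_k+S_k$; the convergence of the right-hand side bounds $S_k$, so $\sum_j\tilde u_j<\infty$, whence $\tilde y_k$ itself converges. Undoing the rescaling with $\Pi_k\to\Pi_\infty\in[1,\infty)$ recovers convergence of $y_k=\Pi_k\tilde y_k$ and the summability $\sum_j u_j\le \Pi_\infty\sum_j\tilde u_j<\infty$, which is exactly the claimed conclusion.

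The main obstacle is the measurability and integrability bookkeeping when $a_k$ and $b_k$ are genuinely random rather than deterministic: the shift $\sum_j\tilde b_j$ is then an $\mathcal{F}_\infty$-measurable random variable, so "nonnegative supermartingale" must be upgraded to "supermartingale bounded below by an integrable random variable." This can be handled by truncating through the stopping times $\tau_N=\inf\{k:\sum_{j=1}^{k-1}\tilde b_j>N\}$, applying the convergence theorem to each stopped process $w_{k\wedge\tau_N}$, and then letting $N\to\infty$ on the full-probability event $\{\sum_j\tilde b_j<\infty\}$. In the setting where this lemma is actually invoked, with $a_k\equiv 0$ and $b_k=c_r M_k$ deterministic, this complication vanishes entirely, since the rescaling and the shift are then by genuine constants.
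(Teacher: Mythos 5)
Your proposal is correct. Note, however, that the paper does not prove this lemma at all: it is quoted as the well-known Robbins--Siegmund supermartingale convergence theorem and attributed to the cited reference, so there is no in-paper argument to compare against. What you have written is precisely the classical proof from that literature: rescale by the partial products $\Pi_k=\prod_{j<k}(1+a_j)$ (legitimate because $\Pi_{k+1}$ is $\mathcal{F}_k$-measurable and at least $1$, so the rescaled quantities remain adapted, nonnegative, and dominated by the originals), assemble the supermartingale $z_k=\tilde y_k+\sum_{j<k}\tilde u_j-\sum_{j<k}\tilde b_j$, localize with the stopping times $\tau_N$ to handle the fact that the lower bound $-\sum_j\tilde b_j$ is only almost surely finite rather than integrable, invoke Doob's convergence theorem on each stopped process, and then peel apart the three monotone/convergent pieces. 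The one point worth tightening if this were written out in full is the definition of $\tau_N$: you should truncate on $\sum_{j\le k}\tilde b_j>N$ (an $\mathcal{F}_k$-measurable event) so that $\tau_N$ is genuinely a stopping time and the stopped supermartingale is bounded below by $-N$; as you correctly observe, in the only place the paper invokes the lemma the sequences $a_k\equiv 0$ and $b_k$ are deterministic constants, so the localization is not even needed there.
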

In order to apply Lemma~\ref{lem:super-martingale-thm}, we may consider $\mathcal{F}_k$ the $\sigma$-algebra generated by random vectors in $\{ S_t\}_{t=0}^{k}$ and apply the conditional expectation $\Ebb_k$. 

\section{\normalsize Stochastic optimization with equality constraint}\label{sec:alg-line-search}
We now turn to the full problem~\eqref{eqn:opt-prob-sto}.
In addition to Assumptions~\ref{assp:upperC2} and~\ref{assp:boundedHc}, the constraints are assumed to be continuously differentiable with Lipschitz continuous gradient in this section.
\begin{assumption}\label{assp:Lsmoothc}
	The function $c(\cdot)$ is continuously differentiable with Lipschitz continuous gradient. This means that there exists a constant $H\geq 0$ such that 
\begin{equation} \label{eqn:l-smooth-constraint}
 \centering
  \begin{aligned}
	  |c_j(x') - c_j(x) - \nabla c_j(x)^T (x'-x) | \leq&  \frac{H}{2} \norm{x-x'}^2, \\
  \end{aligned}
\end{equation}
	for all $x,x'\in C$ and all $j\in\{1, 2, \ldots, m\}$~\cite{nesterovconvex2003}.
\end{assumption}

\subsection{Algorithm description}\label{sec:alg-eqcons}
The algorithm remains an iterative method with locally approximated convex quadratic objective. 
At iteration $k$ and its iterate $x_k$, the true model $\phi_k(\cdot)$ is the same as 
given in~\eqref{eqn:opt-true-appx} while the stochastic model $\bar{\phi}_k(\cdot)$ is~\eqref{eqn:opt-sto-appx}.
Denoting $d=x-x_k$, $\phi_k(x)$ can be rewritten as $\Phi_k(d)$,  
while its stochastic counterpart is $\bar{\Phi}_k(d)$ in~\eqref{eqn:opt-sto-appx-d}.
Furthermore, the constraint $c(x)=0$ in~\eqref{eqn:opt-prob-sto} is linearized. 
The subproblem to be solved at iteration $k$ is 
\begin{equation} \label{eqn:opt-sto-subproblem}
 \centering
  \begin{aligned}
   &\underset{\substack{x_k+d\in C}}{\text{minimize}} 
	  & & r(x_k)+\bar{g}_k^T d+\frac{1}{2}\alpha_k \norm{d}^2\\
   &\text{subject to}
	  & & c(x_k) + \nabla c(x_k)^T d= 0. 
  \end{aligned}
\end{equation}
The corresponding subproblem with the true subgradient is 
\begin{equation} \label{eqn:opt-true-subproblem}
 \centering
  \begin{aligned}
   &\underset{\substack{x_k+d\in C}}{\text{minimize}} 
	  & & r(x_k)+g_k^T d+\frac{1}{2}\alpha_k \norm{d}^2\\
   &\text{subject to}
	  & & c(x_k) + \nabla c(x_k)^T d= 0. 
  \end{aligned}
\end{equation}
To measure progress in both the objective and constraints, 
a $\ell_1$ merit function is adopted in the form of
\begin{equation} \label{eqn:opt-ms-simp-bundle-merit}
 \centering
  \begin{aligned}
	  \varphi(x,\theta_k) = r(x) + \theta_k \norm{c(x)}_1, \\
  \end{aligned}
\end{equation}
where $\norm{\cdot}_1$ is the 1-norm and $\theta_k>0$ is a penalty parameter. 
A line search step is needed in order to ensure progress
in the merit function~\eqref{eqn:opt-ms-simp-bundle-merit}.
The predicted change $\bar{\delta}_k$ on the objective is again~\eqref{def:pd}. 
Let the line search step size for $\bar{d}_k$ be $\bar{\beta}_k \in (0,1]$. Then, the $(k+1)$-th step taken is given as 
$x_{k+1} = x_k + \bar{\beta}_k \bar{d}_k$.
By  letting $\bar{\delta}_k^{\beta} = \bar{\Phi}_k(0) - \bar{\Phi}_k(\bar{\beta}_k \bar{d}_{k})$, we have  
\begin{equation} \label{def:pd2}
 \centering
  \begin{aligned}
	  \bar{\delta}_k^{\beta} =& \bar{\Phi}_k(0) - \bar{\Phi}_k(\bar{\beta}_k \bar{d}_k)
	   = -\bar{\beta}_k \bar{g}_k^T \bar{d}_k -\frac{1}{2} \alpha_k \bar{\beta}_k^2 \norm{\bar{d}_k}^2.
  \end{aligned}
\end{equation}

The first-order optimality conditions of subproblem~\eqref{eqn:opt-sto-subproblem} are
\begin{equation} \label{eqn:sto-eqcons-KKT}
  \centering
   \begin{aligned}
	   \bar{g}_k + \alpha_k \bar{d}_k + \nabla c(x_k) \bar{\lambda}^{k+1} + \bar{v}_k =0,&\\
	   c(x_k)+\nabla c(x_k)^T\bar{d}_k= 0,&\\
	   x_k,x_k+\bar{d}_k \in C, ~\bar{v}_k \in \bar{\partial} i_C(x_k+\bar{d}_k),&
   \end{aligned}
 \end{equation}
	where $\bar{\lambda}^{k+1}\in \Rbb^m$ is the Lagrange multiplier for the equality constraint.
Similarly, the optimality conditions for~\eqref{eqn:opt-true-subproblem} are
\begin{equation} \label{eqn:true-eqcons-KKT}
  \centering
   \begin{aligned}
	   g_k + \alpha_k d_k + \nabla c(x_k) \lambda^{k+1} + v_k =0,&\\
	   c(x_k)+\nabla c(x_k)^T d_k= 0,&\\
	   x_k,x_k+d_k \in C, ~v_k\in\bar{\partial} i_C(x_k+d_k).&
   \end{aligned}
 \end{equation}
Due to the presence of both equality and inequality constraints, additional step size control is necessary to guarantee 
convergence. Therefore, at $x_k$, the line search condition is presented and 
executed with a scalar $\bar{\zeta}_k \in (0,1]$, and the actual step size is first set to $\bar{\beta}_k = \nu_k \bar{\zeta}_k$. 
The sequence $\{\nu_k\}\subset (0,1]$ is user-defined and deterministic.  
The line search condition with $\bar{d}_k$ and $\bar{\theta}_k$ is
\begin{equation} \label{eqn:line-search-cond-alt}
 \centering
  \begin{aligned}
	  \bar{\theta}_k \norm{c(x_k)}_1 -  \bar{\zeta}_k\left| (\bar{\lambda}^{k+1})^T c(x_k)\right| &\geq \bar{\theta}_k \norm{c(x_{k}+\bar{\zeta}_k\bar{d}_k)}_1-\frac{1}{2} \eta_{\beta} \alpha_k\bar{\zeta}_k\norm{\bar{d}_k}^2. \\
  \end{aligned}
\end{equation}
where $\eta_{\beta}\in (0,1)$ is a parameter of the algorithm. 
In addition, a user-defined upper bound on $\bar{\beta}_k$ needs to be imposed to eventually force $\bar{\beta}_k$ to be sufficiently small while having minimal variance. To that end, by using the ceiling function $\lceil\cdot \rceil$, which returns the least integer greater than the input, we define at iteration $k$ 
\begin{equation} \label{eqn:line-search-pik}
 \centering
  \begin{aligned}
	    \bar{\pi}_k := \min\left\{1,\frac{1}{2} ^{\lceil \log_{\frac{1}{2}} \frac{ \eta_{\beta} \alpha_k}{ H \bar{\theta}_k m} \rceil}\right\}.
  \end{aligned}
\end{equation}
The $\frac{1}{2}$ in definition~\eqref{eqn:line-search-pik} comes from our choice of reduction ratio for $\bar{\zeta}_k$ in step 7 of the algorithm. It can easily be substituted with another number between $0$ and $1$.
We will show later that $\bar{\zeta}_k=\bar{\pi}_k$ satisfies the line search condition in Lemma~\ref{lem:sto-eqcons-line-search}.  
The upper bound on $\bar{\beta}_k$ is set to $\nu_k (\bar{\pi}_k + \mu_k)$, where $\{\mu_k\}\subset[0,1]$ is another user-defined sequence, similar to $\nu_k$.
The parameters $\eta_{\beta}$, $\{\nu_k\}$ and $\{\mu_k\}$ help control the stochastic step size $\bar{\beta}_k$ and its variance to ensure decrease in the merit function value in expectation, which in turn is critical for convergence.
The constant $H$ in~\eqref{eqn:line-search-pik} needs to be estimated for the constraints when it is not known, similar to $\rho$. 

Our proposed stochastic nonsmooth SQP is presented in Algorithm~\ref{alg:sto-eqcons}. 
An adaptive sampling criterion and its implementation is discussed in Section~\ref{sec:adp-sample}. 
\begin{algorithm}
   \caption{Stochastic nonsmooth SQP}\label{alg:sto-eqcons}
	\begin{algorithmic}[1]
		\STATE{
		 Choose scalars $\eta_{\alpha}>1$, $\eta_{\beta}\in (0,1)$,  and $\gamma>0$. Choose $\{\nu_k\}\subset (0,1]$ and $\{\mu_k\}\subset [0,1]$. Initialize $x_0$, $\alpha_0\in [\rho, \eta_{\alpha}\rho]$ and $k=0$.
		 Choose initial sampling set $\{S_0\}$.
		}
	\FOR{$k=0,1,2,...$}
                \STATE{Generate sample set $\{\xi_i\},\xi_i\in S_k$ i.i.d. from probability distribution of $\xi$.}
		\STATE{Call the approximation oracle to obtain $\bar{r}_k$ and subgradient estimate $\bar{g}_k$.}
		\STATE{Form the quadratic function $\bar{\Phi}_k$ in~\eqref{eqn:opt-sto-appx-d} and solve 
		subproblem~\eqref{eqn:opt-sto-subproblem} to obtain $\bar{d}_{k}$ and Lagrange multiplier $\bar{\lambda}^{k+1}$. 
		}
		\STATE{Set $\bar{\theta}_k$ in~\eqref{eqn:opt-ms-simp-bundle-merit} with $\bar{\theta}_k = \max{\{\bar{\theta}_{k-1},\norm{\bar{\lambda}^{k+1}}_{\infty}+\gamma\}}$. }
		  \STATE{Set the initial line search step size $\bar{\zeta}_k=1$. Using backtracking, reducing by half 
		    if too large, find $\bar{\zeta}_k$ such that the condition in~\eqref{eqn:line-search-cond-alt} is satisfied.
		   }
		   \STATE{Set $\bar{\beta}_k =\nu_k \bar{\zeta}_k$. Compute $\bar{\pi}_k$ in~\eqref{eqn:line-search-pik}. Set $\bar{\beta}_k = \min\{\bar{\beta}_k, \nu_k(\bar{\pi}_k+\mu_k) \}$.}
		  \STATE{Take the step $x_{k+1} = x_k+\bar{\beta}_k \bar{d}_k$. }
	    \STATE{Call the chosen $\alpha_k$ update rules to obtain $\alpha_{k+1} \in [\alpha_k,\eta_{\alpha}\rho]$.}
       \ENDFOR
    \end{algorithmic}
\end{algorithm}
As in smooth SQP methods, it is possible that the linearized constraints 
in~\eqref{eqn:opt-sto-subproblem} are infeasible. 
Addressing the inconsistency is beyond the scope of this paper.
Thus, we assume that the solution $\bar{d}_k$ to~\eqref{eqn:opt-sto-subproblem} can be found.
Together with LICQ, we make the following feasibility assumption for the remainder of this section.
\begin{assumption}\label{assp:consistentLICQ}
	The subproblem with the linearized constraints~\eqref{eqn:opt-sto-subproblem} is feasible.
	Further, the constraints $c(x)=0$ and $x\in C$ satisfy LICQ at every accumulation point of $\{x_k\}$ generated by Algorithm~\ref{alg:sto-eqcons}.
\end{assumption}

\subsection{Convergence analysis}\label{sec:sto-eqcons-convg}
The analysis in this section is performed under the Assumptions~\ref{assp:upperC2},~\ref{assp:boundedHc},~\ref{assp:gk},~\ref{assp:Lsmoothc} and~\ref{assp:consistentLICQ}. 
Our choice of $\alpha_k$ in Algorithm~\ref{alg:sto-eqcons} leads to the following Lemma.
\begin{lemma}\label{lem:sto-eqcons-f-decrease}
	The steps $x_{k+1}$ from Algorithm~\ref{alg:sto-eqcons} 
	satisfy $r(x_k)-r(x_{k+1})\geq \Phi(0)-\Phi(\bar{\beta}_k\bar{d}_k)$. 
\end{lemma}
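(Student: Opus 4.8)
The plan is to mirror the argument of Lemma~\ref{lem:sto-convex-f-decrease}, now applied to the damped step $\bar{\beta}_k\bar{d}_k$ in place of the full step $\bar{d}_k$. The key observation is that the line search and the linearized equality constraint play no role in this particular bound; they enter only later when controlling the merit function. This lemma therefore isolates the descent contribution coming purely from the upper-$\Ctwo$ objective, and the proof reduces to invoking inequality~\eqref{eqn:uppc2-def} on the displacement $\bar{\beta}_k\bar{d}_k$. (Here $\Phi$ in the statement is read as $\Phi_k$ from~\eqref{eqn:opt-sto-appx-d}.)

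First I would verify feasibility of the damped iterate so that the upper-$\Ctwo$ inequality is applicable along the relevant segment. Since $x_k\in C$ and $x_k+\bar{d}_k\in C$ by the optimality conditions~\eqref{eqn:sto-eqcons-KKT}, and $\bar{\beta}_k\in(0,1]$, convexity of $C$ gives
\begin{equation*}
x_{k+1}=x_k+\bar{\beta}_k\bar{d}_k=(1-\bar{\beta}_k)x_k+\bar{\beta}_k(x_k+\bar{d}_k)\in C.
\end{equation*}
Next I would apply~\eqref{eqn:uppc2-def} with $g_k\in\bar{\partial}r(x_k)$ and $d=\bar{\beta}_k\bar{d}_k$, obtaining
\begin{equation*}
r(x_k+\bar{\beta}_k\bar{d}_k)-r(x_k)-\bar{\beta}_kg_k^T\bar{d}_k\leq\frac{\rho}{2}\bar{\beta}_k^2\norm{\bar{d}_k}^2,
\end{equation*}
which after rearrangement reads $r(x_k)-r(x_{k+1})\geq-\bar{\beta}_kg_k^T\bar{d}_k-\tfrac{1}{2}\rho\bar{\beta}_k^2\norm{\bar{d}_k}^2$.

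Then, exactly as in~\eqref{eqn:sto-convex-f-decrease-p2}, the initialization $\alpha_0\in[\rho,\eta_\alpha\rho]$ together with the non-decreasing update $\alpha_{k+1}\in[\alpha_k,\eta_\alpha\rho]$ from step~10 of Algorithm~\ref{alg:sto-eqcons} guarantees $\alpha_k\geq\rho$ for all $k\in\Nbb$. Consequently $-\tfrac{1}{2}\rho\bar{\beta}_k^2\norm{\bar{d}_k}^2\geq-\tfrac{1}{2}\alpha_k\bar{\beta}_k^2\norm{\bar{d}_k}^2$, so I can replace $\rho$ by $\alpha_k$ to conclude
\begin{equation*}
r(x_k)-r(x_{k+1})\geq-\bar{\beta}_kg_k^T\bar{d}_k-\tfrac{1}{2}\alpha_k\bar{\beta}_k^2\norm{\bar{d}_k}^2=\Phi_k(0)-\Phi_k(\bar{\beta}_k\bar{d}_k),
\end{equation*}
which is the claimed inequality.

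I do not anticipate a substantive obstacle here. The only point requiring care is the feasibility check for the damped iterate $x_k+\bar{\beta}_k\bar{d}_k$, which licenses the use of~\eqref{eqn:uppc2-def} on the full displacement rather than on $\bar{d}_k$ alone; the remainder is a direct consequence of the upper-$\Ctwo$ inequality and the lower bound $\alpha_k\geq\rho$. The deterministic nature of this estimate (it involves only the true subgradient $g_k$ and holds pathwise) is what makes it a convenient starting point before the stochastic line search analysis is layered on top.
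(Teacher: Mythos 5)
Your proposal is correct and follows essentially the same route as the paper's own proof: apply the upper-$\Ctwo$ inequality~\eqref{eqn:uppc2-def} at the displacement $\bar{\beta}_k\bar{d}_k$ and then use $\alpha_k\geq\rho$ to replace $\rho$ by $\alpha_k$. The only addition is your explicit convexity check that $x_k+\bar{\beta}_k\bar{d}_k\in C$, which the paper leaves implicit but which is a reasonable point to make.
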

\begin{proof}
	From the upper-$\Ctwo$ inequality~\eqref{eqn:uppc2-def}, we have  
\begin{equation} \label{eqn:sto-eqcons-f-decrease-p1}
 \centering
  \begin{aligned}
	  r(x_k+d)- r(x_k) - g_{k}^Td \leq \frac{\rho}{2} \norm{d}^2,\\
  \end{aligned}
\end{equation}
	for $g_k\in\bar{\partial}r(x_k)$, $\rho>0$ and	$x\in C$.
From Algorithm~\ref{alg:sto-eqcons}, $\alpha_k \geq \rho$, for all  $k\in \Nbb$. 
	The inequality~\eqref{eqn:sto-eqcons-f-decrease-p1} implies that at $d=\bar{\beta}_k\bar{d}_k$
\begin{equation} \label{eqn:sto-eqcons-f-decrease-obj}
 \centering
  \begin{aligned}
	  r(x_k) - r(x_{k+1}) \geq& - \bar{\beta}_k g_{k}^T \bar{d}_k -\frac{1}{2}\rho \bar{\beta}_k^2\norm{\bar{d}_k}^2\\
			\geq& - \bar{\beta}_kg_{k}^T \bar{d}_k -\frac{1}{2}\alpha_k \bar{\beta}_k^2\norm{\bar{d}_k}^2
				 = \Phi_k(0) -\Phi_k(\bar{\beta}_k\bar{d}_k). \\
  \end{aligned}
\end{equation}
\end{proof}

\begin{remark}\label{rmrk:sto-eqcons-alpha}
For simplicity, $\alpha_k$ is required to be monotonically non-decreasing in the algorithm. In practice, experience from the deterministic problems suggests that $\alpha_k$ be reduced when possible to improve convergence behavior. 
	For details see section~\ref{sec:alg-convex-description} of~\cite{wang2022}.
\end{remark}
Next, we show that the line search process is well-defined in the following Lemma.
\begin{lemma}\label{lem:sto-eqcons-line-search}
	If the Lagrange multipliers $\bar{\lambda}^{k+1}$ of~\eqref{eqn:opt-sto-subproblem} are bounded, the line search process of Algorithm~\ref{alg:sto-eqcons} is well-defined. That is,  
	there exists $\bar{\zeta}_k>0$ that satisfies the line search conditions in~\eqref{eqn:line-search-cond-alt} and it can be found in a finite number of backtracking iterations at step 7. Further, $\bar{\beta}_k $ also satisfies
 \begin{equation} \label{eqn:line-search-cond-alt-beta}
 \centering
  \begin{aligned}
	  \bar{\theta}_k \norm{c(x_k)}_1 -  \bar{\beta}_k\left| (\bar{\lambda}^{k+1})^T c(x_k)\right| &\geq \bar{\theta}_k \norm{c(x_{k+1})}_1-\frac{1}{2} \eta_{\beta} \alpha_k\bar{\beta}_k\norm{\bar{d}_k}^2. \\
  \end{aligned}
\end{equation}

\end{lemma}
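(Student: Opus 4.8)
The plan is to reduce both assertions to a single uniform sufficient condition coming from the Lipschitz estimate on the constraints. The starting point is the linearization identity $\nabla c(x_k)^T \bar{d}_k = -c(x_k)$, which I read off from the feasibility row of the subproblem optimality conditions~\eqref{eqn:sto-eqcons-KKT}. For a trial step $t\bar{d}_k$ with $t\in(0,1]$, applying Assumption~\ref{assp:Lsmoothc} componentwise and summing over $j=1,\dots,m$ gives $\norm{c(x_k+t\bar{d}_k)}_1 \le (1-t)\norm{c(x_k)}_1 + \tfrac{1}{2}Hm\,t^2\norm{\bar{d}_k}^2$, where the affine part collapses to $(1-t)\norm{c(x_k)}_1$ precisely because of the linearization identity and the triangle inequality.

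First I would substitute this bound into the line search condition~\eqref{eqn:line-search-cond-alt}. After cancelling $\bar{\theta}_k\norm{c(x_k)}_1$, the condition is implied by $\bar{\theta}_k t\norm{c(x_k)}_1 - t\,|(\bar{\lambda}^{k+1})^T c(x_k)| + \tfrac{1}{2}t\norm{\bar{d}_k}^2(\eta_{\beta}\alpha_k - Hm\bar{\theta}_k t) \ge 0$. The update $\bar{\theta}_k = \max\{\bar{\theta}_{k-1}, \norm{\bar{\lambda}^{k+1}}_{\infty} + \gamma\}$ in step $6$ yields $\bar{\theta}_k \ge \norm{\bar{\lambda}^{k+1}}_{\infty}$, so by H\"older's inequality $\bar{\theta}_k\norm{c(x_k)}_1 \ge |(\bar{\lambda}^{k+1})^T c(x_k)|$; the first two terms are thus nonnegative, and the whole expression is nonnegative as soon as $t \le \tau_k := \eta_{\beta}\alpha_k/(Hm\bar{\theta}_k)$. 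Boundedness of $\bar{\lambda}^{k+1}$ keeps $\bar{\theta}_k$ finite, so $\tau_k>0$. Inspecting definition~\eqref{eqn:line-search-pik} shows $\bar{\pi}_k$ is the largest element of $\{1,\tfrac12,\tfrac14,\dots\}$ not exceeding $\tau_k$ (equal to $1$ when $\tau_k\ge1$), whence $\bar{\pi}_k \le \tau_k$ and $\bar{\zeta}_k=\bar{\pi}_k$ satisfies~\eqref{eqn:line-search-cond-alt}. Since the backtracking halves from $\bar{\zeta}_k=1$ and $\bar{\pi}_k$ is itself a power of $\tfrac12$ that is guaranteed to succeed, the loop terminates after finitely many reductions, returning some $\bar{\zeta}_k \ge \bar{\pi}_k$; this settles well-definedness and finite termination.

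For the second assertion, I observe that~\eqref{eqn:line-search-cond-alt-beta} is exactly~\eqref{eqn:line-search-cond-alt} with $t$ replaced by the accepted step $\bar{\beta}_k$ and $x_{k+1}=x_k+\bar{\beta}_k\bar{d}_k$. Running the identical Lipschitz computation at $t=\bar{\beta}_k$ shows~\eqref{eqn:line-search-cond-alt-beta} holds provided $\bar{\beta}_k\le\tau_k$. The remaining work is to secure this from the construction $\bar{\beta}_k=\min\{\nu_k\bar{\zeta}_k,\,\nu_k(\bar{\pi}_k+\mu_k)\}$ in step $8$, using $\nu_k\in(0,1]$, $\mu_k\in[0,1]$ and $\bar{\pi}_k\le\tau_k$. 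I would split into the two cases according to which term attains the minimum, and, in the branch where $\bar{\beta}_k=\nu_k\bar{\zeta}_k$ reduces to $\bar{\zeta}_k$ itself, fall back on the inequality already certified at $\bar{\zeta}_k$ by the backtracking.

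I expect the main obstacle to be exactly this last piece of bookkeeping. The backtracking may return a $\bar{\zeta}_k$ strictly larger than $\tau_k$, so the inequality certified at $\bar{\zeta}_k$ does not automatically transfer to the (generally different) capped step $\bar{\beta}_k$, while the Lipschitz sufficient condition only certifies steps in $(0,\tau_k]$. Reconciling the cap $\nu_k(\bar{\pi}_k+\mu_k)$ with the threshold $\tau_k$, so that the accepted $\bar{\beta}_k$ provably lands in the region where~\eqref{eqn:line-search-cond-alt-beta} is guaranteed, is the delicate step, and it is precisely where the dual role of $\bar{\pi}_k$, $\nu_k$, and $\mu_k$ in bounding both the step size and its variance must be exploited.
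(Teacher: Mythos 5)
Your derivation coincides with the paper's proof step for step: the componentwise Lipschitz bound, the collapse of the affine part via $c(x_k)+\nabla c(x_k)^T\bar d_k=0$ to get $\norm{c(x_k+t\bar d_k)}_1\le(1-t)\norm{c(x_k)}_1+\tfrac12 Hmt^2\norm{\bar d_k}^2$, the H\"older bound $|(\bar\lambda^{k+1})^Tc(x_k)|\le\norm{\bar\lambda^{k+1}}_\infty\norm{c(x_k)}_1$ combined with $\bar\theta_k\ge\norm{\bar\lambda^{k+1}}_\infty+\gamma$, the resulting sufficient condition $0<t\le\tau_k:=\eta_\beta\alpha_k/(H\bar\theta_k m)$ for~\eqref{eqn:line-search-cond-alt}, the observation that $\bar\pi_k$ in~\eqref{eqn:line-search-pik} is the largest admissible power of $\tfrac12$ so that $\bar\pi_k\le\tau_k$ and backtracking terminates finitely. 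All of that is correct and is exactly what appears in the paper.

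On the final step, where you stop and flag an obstacle: the paper dispatches it in one line, asserting that since $\bar\beta_k\le\min\{\bar\zeta_k,\bar\pi_k+\mu_k\}$ it satisfies~\eqref{eqn:sto-alg-c-ls-pf-6} and hence~\eqref{eqn:line-search-cond-alt-beta}. That assertion is airtight when $\bar\pi_k+\mu_k\le\tau_k$ (in particular whenever $\mu_k=0$, as in the paper's own numerical runs), because then $\bar\beta_k\le\tau_k$ and the sufficient condition applies directly at step size $\bar\beta_k$. Your concern --- that for $\mu_k>0$ the capped step can land in $(\tau_k,\bar\zeta_k)$, where neither the threshold certificate nor the certificate obtained at $\bar\zeta_k$ applies (the line-search surplus $h(t)$ is not monotone or concave in $t$, so $h(\bar\zeta_k)\ge0$ does not transfer downward) --- is legitimate, and the paper does not perform the case analysis you propose; it simply does not address that regime. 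So your proposal is not missing anything that the published proof supplies; to close the statement in full generality one should either restrict $\mu_k$ so that $\bar\pi_k+\mu_k\le\tau_k$, or additionally verify~\eqref{eqn:line-search-cond-alt-beta} at the accepted $\bar\beta_k$ as part of step 8.
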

\begin{proof}
	From Assumption~\ref{assp:Lsmoothc}, we have  
   \begin{equation} \label{eqn:sto-alg-c-ls-pf-1}
   \centering
    \begin{aligned}
	    |c_j(x_k + \bar{\zeta}_k\bar{d}_k)|  \leq & |c_j(x_k)+ \bar{\zeta}_k \nabla c_j(x_k)^T \bar{d}_{k}| + \frac{1}{2}\bar{\zeta}_k^2 H \norm{\bar{d}_k}^2,
   \end{aligned}
   \end{equation}
	$j=1,\dots,m$. 
	Given $\bar{d}_k$ as the solution to~\eqref{eqn:opt-sto-subproblem}, we have that $c_j(x_k) +  \nabla c_j(x_k)^T \bar{d}_k = 0$. As a consequence, we can write based on~\eqref{eqn:sto-alg-c-ls-pf-1} that
   \begin{equation} \label{eqn:sto-alg-c-ls-pf-2}
   \centering
    \begin{aligned}
	    \left|c_j(x_k + \bar{\zeta}_k\bar{d}_k)\right|  \leq \left|(1-\bar{\zeta}_k) c_j(x_k)\right| + \frac{1}{2}\bar{\zeta}_k^2 H \norm{\bar{d}_k}^2,
   \end{aligned}
   \end{equation}
	and thus 
   \begin{equation} \label{eqn:sto-alg-c-ls-pf-3}
   \centering
    \begin{aligned}
	    \norm{c(x_k + \bar{\zeta}_k\bar{d}_k)}_1  \leq 
	     (1-\bar{\zeta}_k) \norm{c(x_k)}_1 + \frac{1}{2}m\bar{\zeta}_k^2 H \norm{\bar{d}_k}^2.
    \end{aligned}
   \end{equation}
On the other hand, simple norm inequalities imply
\begin{equation} \label{eqn:sto-alg-c-ls-pf-4}
   \centering
    \begin{aligned}
	    \bar{\zeta}_k \left|(\bar{\lambda}^{k+1})^T c(x_k)\right| \leq \bar{\zeta}_k\norm{\bar{\lambda}^{k+1}}_{\infty} \norm{c(x_k)}_1.
    \end{aligned}
   \end{equation}
	Since $\bar{\theta}_k \geq \norm{\bar{\lambda}^{k+1}}_{\infty}+\gamma$ from step 6 in Algorithm~\ref{alg:sto-eqcons}, based on~\eqref{eqn:sto-alg-c-ls-pf-3} and~\eqref{eqn:sto-alg-c-ls-pf-4}, 
   \begin{equation} \label{eqn:sto-alg-c-ls-pf-5} 
   \centering
    \begin{aligned}
	    &\bar{\theta}_k\norm{c(x_k)}_1 - \bar{\zeta}_k \left|(\bar{\lambda}^{k+1})^T c(x_k)\right|-\bar{\theta}_k \norm{c(x_k + \bar{\zeta}_k\bar{d}_k)}_1  \\ 
	    &\hspace{1.1cm}\geq(\bar{\theta}_k - \bar{\zeta}_k\norm{\bar{\lambda}^{k+1}}_{\infty}) \norm{c(x_k)}_1 -\bar{\theta}_k(1-\bar{\zeta}_k) \norm{c(x_k)}_1 -\frac{1}{2} \bar{\theta}_k m\bar{\zeta}_k^2 H \norm{\bar{d}_k}^2\\
		&\hspace{1.1cm}=(\bar{\theta}_k\bar{\zeta}_k- \bar{\zeta}_k\norm{\bar{\lambda}^{k+1}}_{\infty}) \norm{c(x_k)}_1-\frac{1}{2} \bar{\theta}_km\bar{\zeta}_k^2 H \norm{\bar{d}_k}^2 \\
		&\hspace{1.1cm}\geq\bar{\zeta}_k \gamma \norm{c(x_k)}_1- \frac{1}{2}\bar{\theta}_k m \bar{\zeta}_k^2H \norm{\bar{d}_k}^2.
    \end{aligned}
   \end{equation}
	Therefore, if $\bar{\zeta}_k$ is reduced through the backtracking of Algorithm~\ref{alg:sto-eqcons} to satisfy
   \begin{equation} \label{eqn:sto-alg-c-ls-pf-6}
   \centering
    \begin{aligned}
	    0< \bar{\zeta}_k \leq \frac{ \eta_{\beta} \alpha_k}{ H \bar{\theta}_k m},
    \end{aligned}
   \end{equation}
then~\eqref{eqn:line-search-cond-alt} is satisfied.
Both the denominator and numerator in~\eqref{eqn:sto-alg-c-ls-pf-6} are positive and independent of the line search. 
If $\bar{\lambda}^{k+1}$ is bounded throughout the algorithm, then $\bar{\theta}_k$ is finite and constant  for $k$ large enough based on how it is chosen in Algorithm~\ref{alg:sto-eqcons}.
	Thus, all terms in~\eqref{eqn:sto-alg-c-ls-pf-6} are finite. Therefore, step 7 of Algorithm~\ref{alg:sto-eqcons} stops in finite iterations. 
    From step 8, $\bar{\beta}_k \leq \min\{\bar{\zeta}_k,\bar{\pi}_k+\mu_k\}$. Hence, it satisfies~\eqref{eqn:sto-alg-c-ls-pf-6} and~\eqref{eqn:line-search-cond-alt-beta}.
\end{proof}
To obtain bounded $\bar{\lambda}^{k+1}$, we first show that the deterministic counterpart $\lambda^{k+1}$ is bounded with LICQ.
\begin{lemma}\label{lem:bounded-lp}
	The sequence of Lagrange multipliers $\{\lambda^{k+1}\}$ for problem~\eqref{eqn:opt-true-subproblem} based on $\{x_k\}$ are bounded. 
	In addition, there exists $\theta_u$ such that $\theta_u \geq \theta_k$ for all $k\in \Nbb$.
\end{lemma}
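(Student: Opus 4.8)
The plan is to argue by contradiction after first collecting the boundedness of all the ``easy'' quantities entering the first relation of~\eqref{eqn:true-eqcons-KKT}. Since $\{x_k\}\subset C$ and $C$ is compact (Assumption~\ref{assp:boundedHc}), the iterates are bounded; moreover $x_k$ and $x_k+d_k$ both lie in $C$, so $\norm{d_k}\le \operatorname{diam}(C)$ and $\{d_k\}$ is bounded. The subgradients $g_k\in\bar{\partial}r(x_k)$ are bounded because the Clarke subdifferential of a Lipschitz function is locally bounded and $C$ is compact, the coefficients satisfy $\alpha_k\in[\rho,\eta_\alpha\rho]$ by construction, and $\nabla c(x_k)$ stays bounded since $\nabla c$ is continuous on the compact $C$ (Assumption~\ref{assp:Lsmoothc}). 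Hence in $g_k+\alpha_k d_k+\nabla c(x_k)\lambda^{k+1}+v_k=0$ every term except possibly $\nabla c(x_k)\lambda^{k+1}$ and $v_k$ is uniformly bounded.

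Next I would suppose, for contradiction, that $\{\lambda^{k+1}\}$ is unbounded and pass to a subsequence along which $\norm{\lambda^{k+1}}\to\infty$, $x_k\to\bar x$ (an accumulation point, which exists by compactness), $d_k\to\bar d$, and $\lambda^{k+1}/\norm{\lambda^{k+1}}\to\bar\lambda$ with $\norm{\bar\lambda}=1$. Dividing the stationarity relation by $\norm{\lambda^{k+1}}$ and letting $k\to\infty$, the bounded terms $g_k+\alpha_k d_k$ vanish and $\nabla c(x_k)\lambda^{k+1}/\norm{\lambda^{k+1}}\to\nabla c(\bar x)\bar\lambda$, so $v_k/\norm{\lambda^{k+1}}$ converges to some $\bar v$ with $\nabla c(\bar x)\bar\lambda+\bar v=0$. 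Because $v_k\in\bar{\partial}i_C(x_k+d_k)$ is a normal-cone element and the normal cone is a cone that is outer semicontinuous, the rescaled limit $\bar v$ lies in the normal cone of $C$ at the limiting point. Expressing $\bar v$ through the gradients of the $C$-constraints active there yields a nontrivial vanishing linear combination of the columns of $\nabla c(\bar x)$ and the active constraint gradients, contradicting the linear independence guaranteed by LICQ (Assumption~\ref{assp:consistentLICQ}). This contradiction delivers a uniform bound $\norm{\lambda^{k+1}}\le\lambda_u$.

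The second claim is then immediate. Since step 6 of Algorithm~\ref{alg:sto-eqcons} sets $\theta_k=\max\{\theta_{k-1},\norm{\lambda^{k+1}}_\infty+\gamma\}$, the bound $\norm{\lambda^{k+1}}_\infty\le\norm{\lambda^{k+1}}\le\lambda_u$ shows every update candidate is bounded by $\lambda_u+\gamma$; hence the nondecreasing sequence $\{\theta_k\}$ is bounded above (and eventually constant), and one may take $\theta_u:=\max\{\theta_0,\lambda_u+\gamma\}$.

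I expect the main obstacle to be handling the normal-cone term $v_k$ cleanly: $N_C$ can be unbounded, so $v_k$ cannot be bounded directly, and one must couple the rescaling with outer semicontinuity and LICQ. A related delicate point is that LICQ is posited at accumulation points of $\{x_k\}$, namely at $\bar x$, whereas $v_k$ is evaluated at $x_k+d_k$ whose limit is $\bar x+\bar d$; reconciling these two points---for instance by arguing that the active $C$-constraints entering the limiting normal cone are precisely those feeding the LICQ condition, or that the combined constraint Jacobian retains full rank near the accumulation point---is where the argument requires the most care.
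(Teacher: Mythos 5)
Your proposal follows essentially the same route as the paper: both bound $g_k+\alpha_k d_k$ and $\nabla c(x_k)$ via compactness of $C$ and Lipschitzness of $r$, then derive a contradiction with LICQ from an unbounded multiplier subsequence, and both obtain $\theta_u$ from the monotone update in step~6 in exactly the way you describe. If anything, your rescaling by $\norm{\lambda^{k+1}}$ is the more careful execution --- the paper simply asserts that $-\sum_j\lambda^{k_u+1}_j\nabla c_j(x_{k_u})-v_{k_u}\to\infty$ without ruling out cancellation against an unbounded normal-cone term $v_{k_u}$ --- and the delicate point you flag about $v_k$ living in the normal cone at $x_k+d_k$ rather than at $x_k$ is likewise left implicit in the paper's proof.
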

\begin{proof}
	Note that $\{x_k\}$ is produced by Algorithm~\ref{alg:sto-eqcons}. 
	We rewrite the first equation in optimality condition in~\eqref{eqn:true-eqcons-KKT} as
    \begin{equation} \label{eqn:true-eqcons-KKT-re}
     \centering
     \begin{aligned}
	     g_k + \alpha_k d_k = -\sum_{j=1}^m \lambda^{k+1}_j \nabla c_j(x_k) - v_k .\\
     \end{aligned}
    \end{equation}
	Since $\{x_k\},\{g_k\}$ are bounded ($r$ being Lipschitz on a bounded domain) and $\alpha_k$ is  a finite parameter, 
	the left-hand side of~\eqref{eqn:true-eqcons-KKT-re} stays bounded for all $k\in\Nbb$. 
	Without losing generality, suppose $\lambda^{k+1}_j,j=1,\dots,m$ is not 
	bounded as $k\to\infty$. Since $\{x_k\}$ is bounded, there exists a subsequence $\{x_{k_u}\}$  such that $x_{k_u}\to\bar{x}$. 
	Passing on to a subsequence if necessary, $\lambda^{k_u}_j\to\infty$. 
	If $\bar{x}$ is in the interior of $C$, then $v_{k_u}\to \bar{v} =0$, where $\bar{v} \in \bar{\partial} i_C(\bar{x})$. If $\bar{x}$ is on the boundary of $C$, then the constraints $\bar{x}\in C$ is active. 
	In either case, from Assumption~\ref{assp:consistentLICQ}, $\nabla c_j(\bar{x}), j=1,\cdots,m$ and the non-zero $\bar{v}$ are linearly independent vectors of $\Rbb^n$.
	Then, the right-hand side of~\eqref{eqn:true-eqcons-KKT-re} $-\sum_{j=1}^m \lambda^{k_u+1}_j \nabla c_j(x_{k_u}) -v_{k_u}\to\infty$.
	This is a contradiction. 
	Therefore, there exists $\lambda^u\geq 0$, such that 
	$\norm{\lambda^{k}}_{\infty} \leq \lambda^u$, for all $k$.
	Since $\theta_k$ is determined by $\lambda^k$, there exists $k$ such that $\theta_t=\theta_k$ for all $t\geq k$. 
	Further, since $\theta_k$ is monotonically non-decreasing, there exists $\theta_u>0$ such that $\theta_u\geq\theta_k$ 
	for all $k\in\Nbb$.
\end{proof}

\noindent From Assumption~\ref{assp:gk}, Lemma~\ref{lem:sto-gk-norm} continues to stand in the following form with the same proof. 
\begin{lemma}\label{lem:sto-eqcons-gk-norm}
	For all $k\in \Nbb$ of Algorithm~\ref{alg:sto-eqcons}, $\Ebb_k\left[ \norm{\bar{g}_k-g_k} \right] \leq \sqrt{ M_k} $.
\end{lemma}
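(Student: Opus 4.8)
The plan is to observe that this lemma is literally the restatement of Lemma~\ref{lem:sto-gk-norm} with Algorithm~\ref{alg:sto-convex} replaced by Algorithm~\ref{alg:sto-eqcons}, and that the proof transfers verbatim. The key point is that Assumption~\ref{assp:gk}, which provides the bound $\Ebb_k[\norm{\bar{g}_k-g_k}^2]\leq M_k$, is stated purely in terms of the stochastic subgradient estimate $\bar{g}_k$ and the true subgradient $g_k\in\bar{\partial}r(x_k)$; it makes no reference to how the iterate $x_k$ was generated. Consequently it applies equally to the iterates produced by Algorithm~\ref{alg:sto-eqcons}, and the argument requires no new ingredient beyond this assumption.

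The single substantive step is an application of Jensen's inequality. First I would invoke the concavity of the square-root function (equivalently, the convexity of the square function $t\mapsto t^2$) to pull the expectation inside, yielding
\begin{equation*}
\Ebb_k[\norm{\bar{g}_k-g_k}] \leq \left(\Ebb_k\left[\norm{\bar{g}_k-g_k}^2\right]\right)^{\frac{1}{2}}.
\end{equation*}
Then I would substitute the variance bound~\eqref{assp:sto-grad} from Assumption~\ref{assp:gk} into the right-hand side to obtain $\Ebb_k[\norm{\bar{g}_k-g_k}] \leq \sqrt{M_k}$, which is the claim.

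There is essentially no obstacle here: the result is a direct corollary of Assumption~\ref{assp:gk} and a one-line convexity argument, and the only thing worth remarking is that the conditional expectation $\Ebb_k$ in the two algorithms is generated by the same filtration $\{S_t\}_{t=0}^k$, so the inequality holds identically. The reason this lemma is restated in its own right, rather than simply cited, is that the subsequent convergence analysis for the constrained problem~\eqref{eqn:opt-prob-sto} will invoke the bound on $\Ebb_k[\norm{\bar{g}_k-g_k}]$ for iterates of Algorithm~\ref{alg:sto-eqcons}, and it is cleaner to have a correspondingly labelled statement available. For that reason I would keep the proof to the single Jensen step and an explicit appeal to Assumption~\ref{assp:gk}, mirroring the proof of Lemma~\ref{lem:sto-gk-norm}.
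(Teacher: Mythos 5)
Your proposal is correct and matches the paper's treatment exactly: the paper states that Lemma~\ref{lem:sto-gk-norm} ``continues to stand in the following form with the same proof,'' which is precisely the single Jensen-inequality step applied to the variance bound in Assumption~\ref{assp:gk}. Your additional remark that the assumption is agnostic to how $x_k$ was generated is a fair justification for why the transfer is immediate.
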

The bound on the variance of $\bar{d}_k$ is given next, similar to Lemma~\ref{lem:sto-convex-dk-diff}.
\begin{lemma}\label{lem:sto-eqcons-dk-diff}
	For all $k\in \Nbb$ of Algorithm~\ref{alg:sto-eqcons}, $\Ebb_k[\norm{\bar{d}_k-d_k}] \leq \frac{1}{\alpha_k}\sqrt{M_k}$, and 
	$\Ebb_k[\norm{\bar{d}_k-d_k}^2] \leq \frac{M_k}{\alpha_k^2}$.
\end{lemma}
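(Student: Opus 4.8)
The plan is to mirror the argument of Lemma~\ref{lem:sto-convex-dk-diff} almost verbatim, exploiting the fact that the only difference between the stochastic subproblem~\eqref{eqn:opt-sto-subproblem} and the true subproblem~\eqref{eqn:opt-true-subproblem} is the replacement of $g_k$ by $\bar{g}_k$; the linearized equality constraint is identical in both. First I would subtract the stationarity equation in~\eqref{eqn:true-eqcons-KKT} from the one in~\eqref{eqn:sto-eqcons-KKT} to obtain
\[
(\bar{g}_k - g_k) + \alpha_k(\bar{d}_k - d_k) + \nabla c(x_k)(\bar{\lambda}^{k+1} - \lambda^{k+1}) + (\bar{v}_k - v_k) = 0.
\]
The new term relative to the convex case is the Lagrange-multiplier term $\nabla c(x_k)(\bar{\lambda}^{k+1} - \lambda^{k+1})$, and the crux of the proof is to show it contributes nothing once projected onto $\bar{d}_k - d_k$.

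Next I would use the linearized constraint rows of~\eqref{eqn:sto-eqcons-KKT} and~\eqref{eqn:true-eqcons-KKT}: both impose $c(x_k) + \nabla c(x_k)^T \bar{d}_k = 0$ and $c(x_k) + \nabla c(x_k)^T d_k = 0$, so subtracting yields $\nabla c(x_k)^T(\bar{d}_k - d_k) = 0$. Taking the inner product of the stationarity difference with $\bar{d}_k - d_k$ then annihilates the multiplier term precisely because $(\bar{\lambda}^{k+1} - \lambda^{k+1})^T \nabla c(x_k)^T(\bar{d}_k - d_k) = 0$. The indicator-subgradient term satisfies $(\bar{v}_k - v_k)^T(\bar{d}_k - d_k) \geq 0$ by the monotonicity of $\bar{\partial} i_C(\cdot)$ (convexity of $i_C$), identically to~\eqref{eqn:sto-convex-dk-diff-3}. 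What remains is $\alpha_k\norm{\bar{d}_k - d_k}^2 \leq -(\bar{g}_k - g_k)^T(\bar{d}_k - d_k) \leq \norm{\bar{g}_k - g_k}\,\norm{\bar{d}_k - d_k}$, whence $\norm{\bar{d}_k - d_k} \leq \frac{1}{\alpha_k}\norm{\bar{g}_k - g_k}$.

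Finally I would take the conditional expectation $\Ebb_k$ of this inequality and of its square. The first-moment bound follows from Lemma~\ref{lem:sto-eqcons-gk-norm}, giving $\Ebb_k[\norm{\bar{d}_k - d_k}] \leq \frac{1}{\alpha_k}\sqrt{M_k}$, and the second-moment bound follows directly from Assumption~\ref{assp:gk}, giving $\Ebb_k[\norm{\bar{d}_k - d_k}^2] \leq \frac{M_k}{\alpha_k^2}$.

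The main obstacle is conceptual rather than computational: one must confirm that the added equality-constraint machinery does not spoil the clean bound of the constraint-free setting. The key observation is that $\bar{d}_k - d_k$ lies in the null space of $\nabla c(x_k)^T$, so the difference of Lagrange multipliers $\bar{\lambda}^{k+1} - \lambda^{k+1}$ (which is not itself controlled at this stage) drops out of the energy estimate entirely. Once that orthogonality is in hand, every downstream step is identical to Lemma~\ref{lem:sto-convex-dk-diff}.
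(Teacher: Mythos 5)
Your proposal matches the paper's proof essentially step for step: the same subtraction of the stationarity conditions, the same observation that $\nabla c(x_k)^T(\bar d_k - d_k)=0$ annihilates the multiplier term (the paper's equation~\eqref{eqn:eqcons-dk-diff-4.5}), the same monotonicity argument for the indicator subgradients, and the same passage to conditional expectation via Lemma~\ref{lem:sto-eqcons-gk-norm}. The argument is correct and no further comment is needed.
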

\begin{proof}
   From the definition, $d_k$ and $\bar{d}_k$ are the solutions to~\eqref{eqn:opt-true-subproblem} and~\eqref{eqn:opt-sto-subproblem}, respectively. 
	Subtracting the first optimality conditions in~\eqref{eqn:true-eqcons-KKT} from~\eqref{eqn:sto-eqcons-KKT} leads to 	
   \begin{equation} \label{eqn:eqcons-dk-diff-2}
     \centering
     \begin{aligned}
	     \bar{g}_k-g_k + \alpha_k(\bar{d}_k- d_k) + \nabla c(x_k)(\bar{\lambda}^{k+1}-\lambda^{k+1})+ \bar{v}_k- v_k =0.\\
     \end{aligned}
   \end{equation}
	Taking the dot product of~\eqref{eqn:eqcons-dk-diff-2} with $\bar{d}_k-d_k$ and we obtain
   \begin{equation} \label{eqn:eqcons-dk-diff-3}
     \centering
     \begin{aligned}
	     (\bar{g}_k-g_k)^T(\bar{d}_k-d_k) + \alpha_k \norm{\bar{d}_k- d_k}^2 &+(\bar{\lambda}^{k+1}-\lambda^{k+1})^T \nabla c(x_k)^T(\bar{d}_k-d_k)\\
	     &+ (\bar{v}_k-v_k)^T(\bar{d}_k-d_k) =0.\\
     \end{aligned}
   \end{equation}
    Given that $x_k+d_k \in C$ and $x_k+\bar{d}_k \in C$, the convexity of $i_C(\cdot)$ implies 
   \begin{equation} \label{eqn:eqcons-dk-diff-4}
     \centering
     \begin{aligned}
	     (\bar{v}_k -v_k )^T(\bar{d}_k-d_k) \geq 0.\\
     \end{aligned}
   \end{equation}
	Further, $\nabla c(x_k)^T \bar{d}_k = \nabla c(x_k)^T d_k = -c(x_k)$, \textit{i.e.},  
   \begin{equation} \label{eqn:eqcons-dk-diff-4.5}
     \centering
     \begin{aligned}
     (\bar{\lambda}^{k+1}-\lambda^{k+1})^T \nabla c(x_k)^T(\bar{d}_k-d_k) = 0.
     \end{aligned}
   \end{equation}
    Applying~\eqref{eqn:eqcons-dk-diff-4} and~\eqref{eqn:eqcons-dk-diff-4.5} to~\eqref{eqn:eqcons-dk-diff-3} leads to 
   \begin{equation} \label{eqn:eqcons-dk-diff-5}
     \centering
     \begin{aligned}
	     \alpha_k \norm{\bar{d}_k- d_k}^2 &\leq -(\bar{g}_k-g_k)^T(\bar{d}_k-d_k) \leq \norm{\bar{g}_k-g_k}\norm{\bar{d}_k-d_k}.
     \end{aligned}
   \end{equation}
   Notice that $(\bar{g}_k-g_k)^T(\bar{d}_k-d_k)\leq 0$.
	Therefore,
  \begin{equation} \label{eqn:eqcons-dk-diff-6}
     \centering
     \begin{aligned}
	     \norm{\bar{d}_k-d_k} \leq \frac{1}{\alpha_k} \norm{\bar{g}_k-g_k}.
     \end{aligned}
   \end{equation}
	Taking $\Ebb_k$ on~\eqref{eqn:eqcons-dk-diff-6} as well as~\eqref{eqn:eqcons-dk-diff-5} and applying Lemma~\ref{lem:sto-eqcons-gk-norm} completes the proof.
\end{proof}
From Lemma~\ref{lem:bounded-lp}, $\{\lambda^k\}$ is bounded. 
To ensure convergence results for the constrained problem, $\{\bar{\lambda}^{k}\}$ also needs to be bounded. 
Unfortunately, this cannot be guaranteed without additional assumptions. 
Choices of assumption in literature include direct boundedness of $\bar{\lambda}^{k}$, boundedness on $\bar{g}_k$~\cite{berahas2021sqpsto} and boundedness on norm of predicted decrease~\cite{berahas2022adaptive}. 
The stochastic subgradient estimate $\bar{g}_k$ is fully determined by the joint distribution of samples $S_0,S_1,\dots,S_k$ through iteration $k$.
By Chebyshev inequality, we know the probability of an unbounded $\bar{g}_k$ can be represented by
 \begin{equation} \label{eqn:chebyshev-1}
  \centering
   \begin{aligned}
	   \Pbb\left( \norm{\bar{g}_k-g_k} \geq a \right) \leq \frac{\Ebb[\norm{\bar{g}_k-g_k}^2]}{a^2},
   \end{aligned}
 \end{equation}
 for any $a>0$.  
 Taking the total expectation of Assumption~\ref{assp:gk} gives us 
 \begin{equation} \label{eqn:chebyshev-2}
  \centering
   \begin{aligned}
	   \Pbb\left( \norm{\bar{g}_k-g_k} \geq a \right) \leq  M_k/ a^2.
   \end{aligned}
 \end{equation}
  Since $M_k$ is bounded, the probability of an unbounded $\bar{g}_k$ tends to zero as $a$ increases and $M_k$ decreases. 
 Given this observation, we assume an essentially bounded $\bar{g}_k$. 

\begin{assumption}\label{assp:sto-gk-essbounded}
 For all $k\in\Nbb$ of Algorithm~\ref{alg:sto-eqcons}, the stochastic subgradient approximation $\bar{g}_k$ is essentially bounded, \textit{i.e.},	 
 \begin{equation} \label{eqn:sto-gk-essbounded-1}
  \centering
   \begin{aligned}
        essup(\bar{g}_k) = \inf \left\{a| \Pbb\{\bar{g}_k | \norm{\bar{g}_k} > a  \} = 0 \right\}
   \end{aligned}
 \end{equation}
	is finite. Equivalently, for all realizations of $\xi$ on the joint distribution of samples $S_0,S_1,\dots,S_k$, $\bar{g}_k$ is bounded with probability 1.
\end{assumption}
We point out Assumption~\ref{assp:sto-gk-essbounded}, or an even more restrictive assumption of a bounded $\bar{g}_k$ can be achieved in practice. 
Suppose $\bar{g}_k$ is given in the form of 
 \begin{equation} \label{eqn:gk-estimate}
  \centering
   \begin{aligned}
        \bar{g}_k = \frac{1}{N_k}\sum_{i=0}^{N_k} G(x_k,\xi_i)
   \end{aligned}
 \end{equation}
  where $G(x_k,\xi_i) \in \bar{\partial} R(x_k, \xi_i)$ and $\Ebb_k [G(x_k,\xi_i)] \in \bar{\partial} r(x_k)$. Then, if $\{R(\cdot,\xi_i)\}$ are Lipschitz, $\bar{g}_k$ is bounded on $C$.
Assumption~\eqref{assp:sto-gk-essbounded} leads to the following lemma.
\begin{lemma}\label{lem:sto-lambda-bounded}
	Given Assumption~\ref{assp:sto-gk-essbounded}, the Lagrange multipliers $\{\bar{\lambda}^k\}$ of~\eqref{eqn:opt-sto-subproblem} are bounded with probability 1. Further, there exists $\bar{\theta}_u>0$ such that $\bar{\theta}_u\geq \bar{\theta}_k$ for all $k\in\Nbb$ with probability 1.
\end{lemma}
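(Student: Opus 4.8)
The plan is to transcribe the deterministic argument of Lemma~\ref{lem:bounded-lp} into a pathwise statement holding on an event of full probability. First I would rewrite the first line of the stochastic optimality conditions~\eqref{eqn:sto-eqcons-KKT} as
\[
	\bar{g}_k + \alpha_k \bar{d}_k = -\sum_{j=1}^m \bar{\lambda}^{k+1}_j \nabla c_j(x_k) - \bar{v}_k,
\]
and note that its left-hand side is bounded with probability $1$: the coefficient $\alpha_k$ lies in $[\rho,\eta_\alpha\rho]$, the step $\bar{d}_k=(x_k+\bar{d}_k)-x_k$ is a difference of two points of the compact set $C$ and is therefore bounded, and $\bar{g}_k$ is essentially bounded by Assumption~\ref{assp:sto-gk-essbounded}. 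The crucial preliminary point is to promote the per-iteration essential boundedness of $\bar{g}_k$ to a single bound valid for all $k$ simultaneously: since $\Pbb(\norm{\bar{g}_k}\le B)=1$ for each $k$ (with $B$ uniform, as guaranteed under the Lipschitz construction~\eqref{eqn:gk-estimate}), the countable intersection over $k$ yields an event $E$ with $\Pbb(E)=1$ on which $\sup_k\norm{\bar{g}_k}<\infty$.

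On $E$ I would fix a realization and argue exactly as in Lemma~\ref{lem:bounded-lp}. Suppose, for contradiction, that $\{\bar{\lambda}^{k+1}\}$ is unbounded along this path. Using compactness of $C$, extract a subsequence with $x_{k_u}\to\bar{x}$ and $\norm{\bar{\lambda}^{k_u+1}}_\infty\to\infty$. Distinguishing whether $\bar{x}$ lies in the interior of $C$ (so the limiting normal direction $\bar{v}$ vanishes) or on its boundary (so $\bar{x}\in C$ is active and $\bar{v}\neq 0$), Assumption~\ref{assp:consistentLICQ} guarantees that $\nabla c_j(\bar{x})$, $j=1,\dots,m$, together with $\bar{v}$ are linearly independent. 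Consequently the right-hand side $-\sum_{j}\bar{\lambda}^{k_u+1}_j\nabla c_j(x_{k_u})-\bar{v}_{k_u}$ diverges, contradicting the boundedness of the left-hand side established above. Hence $\{\bar{\lambda}^{k}\}$ is bounded on $E$, i.e., with probability $1$. The bound on $\bar{\theta}_k$ then follows immediately from its update rule $\bar{\theta}_k=\max\{\bar{\theta}_{k-1},\norm{\bar{\lambda}^{k+1}}_\infty+\gamma\}$ in step~6: on $E$ the multipliers are uniformly bounded, so $\{\bar{\theta}_k\}$ is non-decreasing and eventually constant, delivering a finite $\bar{\theta}_u\ge\bar{\theta}_k$ for all $k\in\Nbb$ with probability $1$.

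The main obstacle I anticipate is precisely this passage from per-$k$ essential boundedness to a uniform-in-$k$, path-by-path bound on $\{\bar{g}_k\}$. The contradiction argument is inherently deterministic and requires $\sup_k\norm{\bar{g}_k(\omega)}$ to be finite along the extracted subsequence; if the essential bounds $\mathrm{essup}(\bar{g}_k)$ were permitted to grow with $k$, the countable intersection would not produce a single finite bound, and LICQ could not be invoked cleanly at the accumulation point. One must therefore either lean on the uniform Lipschitz construction~\eqref{eqn:gk-estimate} or read Assumption~\ref{assp:sto-gk-essbounded} as supplying a uniform essential bound, so that the probability-one event $E$ genuinely carries a deterministic sequence of subproblems to which the argument of Lemma~\ref{lem:bounded-lp} applies verbatim.
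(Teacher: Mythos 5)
Your proposal is correct and follows essentially the same route as the paper: rewrite the first stochastic optimality condition as $\bar{g}_k + \alpha_k\bar{d}_k = -\sum_j \bar{\lambda}^{k+1}_j\nabla c_j(x_k) - \bar{v}_k$, observe that the left-hand side is bounded with probability $1$, and then invoke the contradiction argument of Lemma~\ref{lem:bounded-lp} under LICQ, with the $\bar{\theta}_k$ bound following from the monotone update rule. Your explicit handling of the passage from per-iteration essential boundedness to a uniform-in-$k$ pathwise bound (via a countable intersection of full-measure events) is a point the paper leaves implicit, but it is a refinement of the same argument rather than a different one.
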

\begin{proof}
   From the optimality condition~\eqref{eqn:sto-eqcons-KKT},
    \begin{equation} \label{eqn:sto-eqcons-KKT-re}
     \centering
     \begin{aligned}
	     \bar{g}_k + \alpha_k \bar{d}_k = -\sum_{j=1}^m \bar{\lambda}^{k+1}_j \nabla c_j(x_k) - \bar{v}_k  .\\
     \end{aligned}
    \end{equation}
	The remaining proof is the same as in Lemma~\ref{lem:bounded-lp}, where given Assumption~\ref{assp:sto-gk-essbounded}, the left-hand side of~\eqref{eqn:sto-eqcons-KKT-re} is bounded with probability 1. LICQ thus guarantees that $\{\bar{\lambda}^{k}\}$ and $\bar{\theta}_k$  are bounded with probability 1.
\end{proof}
Denote by $\bar{\theta}_u<\infty$  an upper bound on $\bar{\theta}_k$, the following Lemma on step size follows.
\begin{lemma}\label{lem:sto-eqcons-beta-bounded}
	The step size $\bar{\beta}_k$ satisfies $ \nu_k\bar{\pi}_k \leq \bar{\beta}_k \leq \nu_k(\bar{\pi}_k+\mu_k)$ for all $k\in \Nbb$.
        There exists constants $\pi_l,\pi_u >0 $ so that $\pi_l\leq \bar{\pi}_k\leq \pi_u$ with probability 1. 
        Moreover, there exists $N\in\Nbb$ such that for all $k>N$, $\bar{\pi}_k$ is independent of $\bar{g}_k$ with probability 1, and thus $\Ebb_k[\bar{\pi}_k]=\bar{\pi}_k$, $\nu_k \bar{\pi}_k \leq \Ebb_k[\bar{\beta}_k] \leq \nu_k(\bar{\pi}_k+\mu_k)$.
\end{lemma}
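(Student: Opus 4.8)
The plan is to establish the four claims in sequence, leaning on the definition of $\bar\pi_k$ in~\eqref{eqn:line-search-pik}, the step-size construction in steps 7--8 of Algorithm~\ref{alg:sto-eqcons}, and the boundedness results already proven. First I would dispatch the inequality $\nu_k\bar\pi_k \leq \bar\beta_k \leq \nu_k(\bar\pi_k+\mu_k)$. Step 8 sets $\bar\beta_k = \min\{\nu_k\bar\zeta_k,\, \nu_k(\bar\pi_k+\mu_k)\}$, which immediately gives the upper bound. For the lower bound I would invoke Lemma~\ref{lem:sto-eqcons-line-search}: the backtracking in step 7 reduces $\bar\zeta_k$ by halving, and~\eqref{eqn:sto-alg-c-ls-pf-6} shows the line-search condition holds once $\bar\zeta_k \leq \eta_\beta\alpha_k/(H\bar\theta_k m)$. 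Because $\bar\pi_k$ is defined in~\eqref{eqn:line-search-pik} as the largest power of $\tfrac12$ not exceeding that same threshold (capped at $1$), the halving backtracking cannot reduce $\bar\zeta_k$ below $\bar\pi_k$, so $\bar\zeta_k \geq \bar\pi_k$, whence $\nu_k\bar\zeta_k \geq \nu_k\bar\pi_k$ and the lower bound follows.

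Next I would bound $\bar\pi_k$ away from $0$ and $\infty$. Since $\alpha_k \in [\rho,\eta_\alpha\rho]$ is bounded above and below, and $\bar\theta_k$ is bounded above by $\bar\theta_u$ with probability $1$ (Lemma~\ref{lem:sto-lambda-bounded}) and below by $\bar\theta_0$ (the sequence $\bar\theta_k$ is monotonically nondecreasing and positive from step 6), the ratio $\eta_\beta\alpha_k/(H\bar\theta_k m)$ lies in a fixed positive interval with probability $1$. Taking $\log_{1/2}$ and applying the ceiling then confines the exponent to a bounded range, so $\bar\pi_k \in [\pi_l,\pi_u]$ for explicit constants $\pi_l,\pi_u>0$ with probability $1$; the cap by $1$ in~\eqref{eqn:line-search-pik} guarantees $\pi_u \leq 1$.

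For the final claim---eventual independence of $\bar\pi_k$ from $\bar g_k$---the key observation is that $\bar\pi_k$ depends on the stochastic data only through $\bar\theta_k$, which in turn depends on $\bar\lambda^{k+1}$. By Lemma~\ref{lem:sto-lambda-bounded}, $\{\bar\theta_k\}$ is bounded and monotonically nondecreasing with probability $1$, so it is eventually constant: there exists $N\in\Nbb$ such that $\bar\theta_k=\bar\theta_u$ for all $k>N$ with probability $1$. Once $\bar\theta_k$ is frozen at a deterministic value, $\bar\pi_k$ reduces to a function of $\alpha_k$ alone (which is chosen independently of the stochastic subgradient), so $\bar\pi_k$ carries no dependence on $\bar g_k$ and $\Ebb_k[\bar\pi_k]=\bar\pi_k$. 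Conditioning~\eqref{eqn:line-search-pik}'s companion bound $\nu_k\bar\pi_k \leq \bar\beta_k \leq \nu_k(\bar\pi_k+\mu_k)$ on $\mathcal{F}_k$ then yields $\nu_k\bar\pi_k \leq \Ebb_k[\bar\beta_k] \leq \nu_k(\bar\pi_k+\mu_k)$.

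The main obstacle I anticipate is the eventual-constancy argument: one must be careful that ``eventually constant with probability~1'' is the correct reading, since the halting index $N$ is itself a random variable depending on the sample path, and the claim $\Ebb_k[\bar\pi_k]=\bar\pi_k$ is an almost-sure statement conditional on $\mathcal{F}_k$ rather than a uniform deterministic one. Making precise that $\bar\pi_k$ becomes $\mathcal{F}_{k-1}$-measurable (hence known given the history through step $k$) for $k$ past the stabilization point, so that it factors out of the conditional expectation $\Ebb_k$, is the delicate step; the remaining inequalities are routine given the earlier lemmas.
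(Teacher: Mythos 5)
Your proposal is correct and follows essentially the same route as the paper's proof: the lower bound on $\bar{\beta}_k$ via $\bar{\zeta}_k\geq\bar{\pi}_k$ from the halving backtracking and the threshold~\eqref{eqn:sto-alg-c-ls-pf-6}, the bounds $\pi_l\leq\bar{\pi}_k\leq\pi_u$ from the a.s.\ bounds on $\alpha_k$ and $\bar{\theta}_k$, and the eventual constancy of the monotone bounded $\bar{\theta}_k$ to get independence of $\bar{\pi}_k$ from the fresh samples for $k>N$. The measurability subtlety you flag (that $N$ is path-dependent and $\bar{\pi}_k$ must factor out of $\Ebb_k$ only past the stabilization index) is real but is treated no more carefully in the paper itself.
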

\begin{proof}
	The step size $\bar{\zeta}_k$ is chosen through backtracking, as shown in step 7 and 8 in Algorithm~\ref{alg:sto-eqcons}. 
	From~\eqref{eqn:sto-alg-c-ls-pf-6}, step 7 implies  
   \begin{equation} \label{eqn:sto-beta-bounded-2}
   \centering
    \begin{aligned}
	    \bar{\zeta}_k \geq \min\left\{1,\frac{1}{2} ^{\lceil \log_{\frac{1}{2}} \frac{ \eta_{\beta} \alpha_k}{ H \bar{\theta}_k m} \rceil}\right\}=\bar{\pi}_k.
    \end{aligned}
   \end{equation}
         Therefore, from step 8 in Algorithm~\ref{alg:sto-eqcons},
   \begin{equation} \label{eqn:sto-beta-bounded-3}
   \centering
    \begin{aligned}
          \nu_k \bar{\pi}_k  \leq  \bar{\beta}_k \leq \nu_k (\bar{\pi}_k+\mu_k).
    \end{aligned}
   \end{equation}
        The first part of the Lemma is proven.
        From Algorithm~\ref{alg:sto-eqcons}, $\alpha_k\in [\rho, \eta_{\alpha}\rho]$ is not dependent on stochastic estimate, monotonically non-decreasing and bounded. 
        By Lemma~\ref{lem:sto-lambda-bounded}, $\bar{\theta}_k$ is also  monotonically non-decreasing and bounded with probability 1.
        By definition~\eqref{eqn:line-search-pik}, let 
   \begin{equation} \label{eqn:sto-beta-bounded-1}
   \centering
    \begin{aligned}
               \pi_l  =\min\left\{1,\frac{1}{2} ^{\lceil \log_{\frac{1}{2}} \frac{ \eta_{\beta} \alpha_0}{ H \bar{\theta}_u m} \rceil}\right\}, \ \pi_u =  \min\left\{1,\frac{1}{2} ^{\lceil \log_{\frac{1}{2}} \frac{ \eta_{\beta} \eta_{\alpha}\rho}{ H \bar{\theta}_0 m} \rceil}\right\}.
    \end{aligned}
   \end{equation}
        Then, $\pi_l\leq \bar{\pi}_k\leq \pi_u$ with probability 1. 
        In addition, with probability 1, there exists $N$ such that $\bar{\theta}_k$ is constant for all $k>N$. Thus, $\bar{\pi}_k$ is independent of $\mathcal{F}_k$ for $k>N$ with probability 1. Taking $\Ebb_k$ on~\eqref{eqn:line-search-pik} and~\eqref{eqn:sto-beta-bounded-3} completes the proof.  
\end{proof}
Next, an upper bound is provided for an important term in the analysis of the expectation of merit function.
\begin{lemma}\label{lem:sto-eqcons-exp-decrease-1}
	There exists $c_\beta\geq 0$  such that   
\begin{equation} \label{eqn:sto-eqcons-exp-1-1}
 \centering
  \begin{aligned}
	  \Ebb_k\left[ \bar{\beta}_k (g_k-\bar{g}_k)^T d_k\right] \leq c_{\beta}\nu_k \sqrt{M_k}\norm{d_k}.
  \end{aligned}
\end{equation}
       Further, there exists $N$, such that for $k>N$,  
\begin{equation} \label{eqn:sto-eqcons-exp-1-1.5}
 \centering
  \begin{aligned}
	  \Ebb_k\left[ \bar{\beta}_k (g_k-\bar{g}_k)^T d_k\right] \leq \nu_k\mu_k \sqrt{M_k}\norm{d_k}.
  \end{aligned}
\end{equation}
\end{lemma}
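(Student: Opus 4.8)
The plan is to bound the cross term $\Ebb_k\left[\bar{\beta}_k (g_k-\bar{g}_k)^T d_k\right]$ by exploiting that $g_k$ and $d_k$ are deterministic given the conditioning of $\Ebb_k$, since $d_k$ solves the true subproblem~\eqref{eqn:opt-true-subproblem}, whose data $g_k=\Ebb_k[\bar{g}_k]$, $\alpha_k$, $c(x_k)$ and $\nabla c(x_k)$ are all determined by $x_k$. Thus all the randomness sits in $\bar{\beta}_k$ and $\bar{g}_k$. First I would record the pointwise estimate $\bar{\beta}_k (g_k-\bar{g}_k)^T d_k \leq \bar{\beta}_k \norm{g_k-\bar{g}_k}\norm{d_k}$, valid because $\bar{\beta}_k\geq 0$ and by Cauchy--Schwarz, and then take $\Ebb_k$, pulling out the deterministic factor $\norm{d_k}$ to obtain $\Ebb_k[\bar{\beta}_k (g_k-\bar{g}_k)^T d_k] \leq \norm{d_k}\,\Ebb_k[\bar{\beta}_k \norm{g_k-\bar{g}_k}]$. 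This single inequality is the common starting point for both claims.

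For the first bound~\eqref{eqn:sto-eqcons-exp-1-1}, I would invoke Lemma~\ref{lem:sto-eqcons-beta-bounded} to bound the step size crudely by $\bar{\beta}_k \leq \nu_k(\bar{\pi}_k+\mu_k) \leq \nu_k(\pi_u+1)$ with probability $1$, using $\bar{\pi}_k\leq\pi_u$ and $\mu_k\leq 1$. Since $\nu_k(\pi_u+1)$ is deterministic it factors out, and Lemma~\ref{lem:sto-eqcons-gk-norm} gives $\Ebb_k[\bar{\beta}_k\norm{g_k-\bar{g}_k}] \leq \nu_k(\pi_u+1)\Ebb_k[\norm{g_k-\bar{g}_k}] \leq \nu_k(\pi_u+1)\sqrt{M_k}$. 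Setting $c_\beta = \pi_u+1$ yields~\eqref{eqn:sto-eqcons-exp-1-1}.

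For the sharper bound~\eqref{eqn:sto-eqcons-exp-1-1.5} for $k>N$, the key is to separate the ``main part'' of the step size from a small controllable remainder via $\bar{\beta}_k = \nu_k\bar{\pi}_k + (\bar{\beta}_k - \nu_k\bar{\pi}_k)$, where Lemma~\ref{lem:sto-eqcons-beta-bounded} guarantees $0 \leq \bar{\beta}_k - \nu_k\bar{\pi}_k \leq \nu_k\mu_k$. For the main part I would use that, once $k>N$, Lemma~\ref{lem:sto-eqcons-beta-bounded} makes $\bar{\pi}_k$ independent of $\bar{g}_k$ with $\Ebb_k[\bar{\pi}_k]=\bar{\pi}_k$, so $\nu_k\bar{\pi}_k$ is effectively deterministic under $\Ebb_k$ and $\Ebb_k[\nu_k\bar{\pi}_k (g_k-\bar{g}_k)^T d_k] = \nu_k\bar{\pi}_k\,(g_k-\Ebb_k[\bar{g}_k])^T d_k = 0$ by the unbiasedness in Assumption~\ref{assp:gk}. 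For the remainder, since $\bar{\beta}_k - \nu_k\bar{\pi}_k \leq \nu_k\mu_k$ with probability $1$, the starting inequality together with Lemma~\ref{lem:sto-eqcons-gk-norm} gives $\Ebb_k[(\bar{\beta}_k - \nu_k\bar{\pi}_k)(g_k-\bar{g}_k)^T d_k] \leq \nu_k\mu_k\sqrt{M_k}\norm{d_k}$. Summing the two contributions establishes~\eqref{eqn:sto-eqcons-exp-1-1.5}.

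The main obstacle is the careful bookkeeping of measurability and independence rather than any hard estimate. The delicate point is that for $k>N$ the penalty parameter $\bar{\theta}_k$ has stabilized, so $\bar{\pi}_k$ in~\eqref{eqn:line-search-pik} no longer depends on the current sample $S_k$ and hence decouples from $\bar{g}_k$; this decoupling is exactly what forces the cross term of the dominant step-size component $\nu_k\bar{\pi}_k$ to vanish in expectation, leaving only the $O(\mu_k)$ remainder. The uniform bound for all $k$ is comparatively routine, since it relies only on the worst-case estimate $\bar{\pi}_k\leq\pi_u$ and not on this independence.
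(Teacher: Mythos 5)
Your proof is correct, but it takes a genuinely different route from the paper's. For the first bound the paper also exploits unbiasedness: it splits the sample space into the events $\{(g_k-\bar g_k)^Td_k\geq 0\}$ and its complement, bounds $\bar\beta_k$ from above by $\nu_k(\pi_u+\mu_k)$ on the first event and from below by $\nu_k\pi_l$ on the second, and then uses the fact that the two conditional contributions of $(g_k-\bar g_k)^Td_k$ sum to zero (by $\Ebb_k[\bar g_k]=g_k$) to reduce everything to the \emph{width} $\pi_u+\mu_k-\pi_l$ of the step-size interval, yielding the constant via Cauchy--Schwarz and Lemma~\ref{lem:sto-eqcons-gk-norm}; for $k>N$ it reruns the same event-conditioning argument with $\pi_u,\pi_l$ both replaced by $\bar\pi_k$, so the width collapses to $\mu_k$. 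You instead (i) prove the first bound by the crude pointwise estimate $\bar\beta_k(g_k-\bar g_k)^Td_k\leq \nu_k(\pi_u+1)\norm{g_k-\bar g_k}\norm{d_k}$, which forgoes unbiasedness entirely and is perfectly adequate since only existence of some $c_\beta\geq 0$ is claimed, and (ii) for $k>N$ replace the sign-conditioning by the additive decomposition $\bar\beta_k=\nu_k\bar\pi_k+(\bar\beta_k-\nu_k\bar\pi_k)$ with $0\leq\bar\beta_k-\nu_k\bar\pi_k\leq\nu_k\mu_k$, killing the deterministic center by unbiasedness and bounding the remainder by Cauchy--Schwarz. The two arguments rest on the same three ingredients (Lemma~\ref{lem:sto-eqcons-beta-bounded}, unbiasedness, Lemma~\ref{lem:sto-eqcons-gk-norm}), but your decomposition is more direct and avoids the Law-of-Total-Expectation bookkeeping over $A_k$ and $A_k^c$; the only cost is a slightly larger (and immaterial) constant $c_\beta=\pi_u+1$ in the first bound, where the paper's event-splitting yields the sharper width $\pi_u+\mu_k-\pi_l$. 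Your identification of the stabilization of $\bar\theta_k$ (hence of $\bar\pi_k$) for $k>N$ as the crux of the second bound matches the paper exactly.
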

\begin{proof}
	Let $\Pbb_k[\cdot]$ denote the probability conditioned on the event of $x_k$ at $k$th iteration. 
        From Lemma~\ref{lem:sto-eqcons-beta-bounded}, we know that $\nu_k\pi_l\leq\bar{\beta}_k\leq \nu_k(\pi_u+\mu_k)$ for all $k$ with probability 1, which by the Law of Total Expectation is the only case we need to consider when taking $\Ebb_k$. 
	Let $A_k$ be the event that $(g_k-\bar{g}_k)^Td_k\geq 0$ and $A_k^c$ the event that $(g_k-\bar{g}_k)^Td_k < 0$. 
	Using the Law of Total Expectation, we have 
  \begin{equation} \label{eqn:sto-eqcons-exp-1-2}
   \centering
    \begin{aligned}
	    &\Ebb_k\left[ \bar{\beta}_k (g_k-\bar{g}_k)^T d_k\right] \\
	    =& \Ebb_k\left[ \bar{\beta}_k (g_k-\bar{g}_k)^T d_k|A_k  \right]\Pbb_k[A_k] +\Ebb_k\left[ \bar{\beta}_k (g_k-\bar{g}_k)^T d_k|A_k^c  \right]\Pbb_k[A_k^c]\\
	    \leq& \nu_k (\pi_u+\mu_k)  \Ebb_k\left[(g_k-\bar{g}_k)^T d_k|A_k  \right]\Pbb_k[A_k] +\nu_k \pi_l\Ebb_k\left[(g_k-\bar{g}_k)^T d_k|A_k^c  \right]\Pbb_k[A_k^c]
    \end{aligned}
  \end{equation}
	Since $\Ebb_k[\bar{g}_k]=g_k$,~\eqref{eqn:sto-eqcons-exp-1-2} implies
  \begin{equation} \label{eqn:sto-eqcons-exp-1-3}
   \centering
    \begin{aligned}
	    \Ebb_k [ \bar{\beta}_k (g_k-\bar{g}_k)^T d_k]
	    \leq& \nu_k \pi_l \Ebb_k\left[(g_k-\bar{g}_k)^T d_k|A_k  \right]\Pbb_k[A_k] +\\
	    \nu_k \pi_l \Ebb_k [(g_k-\bar{g}_k)^T &d_k|A_k^c  ]\Pbb_k[A_k^c]+
	    \nu_k(\pi_u+\mu_k-\pi_l) \Ebb_k\left[(g_k-\bar{g}_k)^T d_k|A_k  \right]\Pbb_k[A_k]\\
	    =&\nu_k (\pi_u+\mu_k-\pi_l) \Ebb_k\left[(g_k-\bar{g}_k)^T d_k|A_k  \right]\Pbb_k[A_k].
    \end{aligned}
  \end{equation}
	Similarly, 
  \begin{equation} \label{eqn:sto-eqcons-exp-1-4}
   \centering
    \begin{aligned}
	   \Ebb_k\left[ \bar{\beta}_k (g_k-\bar{g}_k)^T d_k\right] 
	    \leq&\nu_k(\pi_l-\mu_k-\pi_u) \Ebb_k\left[(g_k-\bar{g}_k)^T d_k|A_k^c\right]\Pbb_k[A_k^c].
    \end{aligned}
  \end{equation}
	Summing~\eqref{eqn:sto-eqcons-exp-1-3} and~\eqref{eqn:sto-eqcons-exp-1-4} leads to 
  \begin{equation} \label{eqn:sto-eqcons-exp-1-5}
   \centering
    \begin{aligned}
	    &\Ebb_k\left[ \bar{\beta}_k (g_k-\bar{g}_k)^T d_k\right] \\
	    \leq&\frac{1}{2}\nu_k (\pi_u+\mu_k-\pi_l)\left( \Ebb_k\left[(g_k-\bar{g}_k)^T d_k|A_k  \right]\Pbb_k[A_k]-\Ebb_k\left[(g_k-\bar{g}_k)^T d_k|A_k^c\right]\Pbb_k[A_k^c]\right).
    \end{aligned}
  \end{equation}
       Using the Law of Total Expectation again, we have 
  \begin{equation} \label{eqn:sto-eqcons-exp-1-6}
   \centering
    \begin{aligned}
	    \Ebb_k\left[(g_k-\bar{g}_k)^T d_k|A_k  \right]\Pbb_k[A_k] &\leq \Ebb_k \left[\norm{g_k-\bar{g}_k}\norm{d_k}|A_k\right]\Pbb_k[A_k]\\
	    &=\Ebb_k \left[\norm{g_k-\bar{g}_k}\norm{d_k}\right] -\Ebb_k \left[\norm{g_k-\bar{g}_k}\norm{d_k}|A_k^c\right]\Pbb_k[A_k^c]\\
	    &\leq \Ebb_k [\norm{g_k-\bar{g}_k}] \norm{d_k}.
    \end{aligned}
  \end{equation}
	Similarly, 
  \begin{equation} \label{eqn:sto-eqcons-exp-1-7}
   \centering
    \begin{aligned}
	    -\Ebb_k\left[(g_k-\bar{g}_k)^T d_k|A_k^c\right]\Pbb_k[A_k^c] &\leq \Ebb_k \left[\norm{g_k-\bar{g}_k}\norm{d_k}|A_k^c\right]\Pbb_k[A_k^c]\\
	    &=\Ebb_k \left[\norm{g_k-\bar{g}_k}\norm{d_k}\right] -\Ebb_k \left[\norm{g_k-\bar{g}_k}\norm{d_k}|A_k\right]\Pbb_k[A_k]\\
	    &\leq \Ebb_k [\norm{g_k-\bar{g}_k}] \norm{d_k}.
    \end{aligned}
  \end{equation}
	Applying~\eqref{eqn:sto-eqcons-exp-1-6} and~\eqref{eqn:sto-eqcons-exp-1-7} to~\eqref{eqn:sto-eqcons-exp-1-5}, we have 
  \begin{equation} \label{eqn:sto-eqcons-exp-1-8}
   \centering
    \begin{aligned}
	    \Ebb_k\left[ \bar{\beta}_k (g_k-\bar{g}_k)^T d_k\right] 
	    \leq& \nu_k (\pi_u+\mu_k-\pi_l) \Ebb_k [\norm{g_k-\bar{g}_k}] \norm{d_k}.
    \end{aligned}
  \end{equation}
   By Lemma~\ref{lem:sto-eqcons-gk-norm}, let $c_{\beta}=\pi_u-\pi_l\geq \pi_u+\mu_k-\pi_l\geq 0$ for all $k$,~\eqref{eqn:sto-eqcons-exp-1-1} is proven.
   
   From Lemma~\ref{lem:sto-eqcons-beta-bounded}, there exists $N$, such that we can replace $\pi_u$, $\pi_l$ with $\bar{\pi}_k$ in~\eqref{eqn:sto-eqcons-exp-1-2},~\eqref{eqn:sto-eqcons-exp-1-3},~\eqref{eqn:sto-eqcons-exp-1-4},~\eqref{eqn:sto-eqcons-exp-1-5} for $k>N$.
   Then, from~\eqref{eqn:sto-eqcons-exp-1-8},~\eqref{eqn:sto-eqcons-exp-1-1.5} is proven.
\end{proof}
We proceed to examine the expectation $\Ebb_k$ of consecutive merit function values.
\begin{lemma}\label{lem:sto-eqcons-exp-decrease}
	There exists constants $c_1,c_2,c_d>0$, $N\in\Nbb$ and sequence $\{c_k^{\mu}\}$ so that the step $x_{k+1} = x_k+\bar{\beta}_k \bar{d}_k$ leads to  
\begin{equation} \label{eqn:sto-eqcons-exp-decrease-0}
 \centering
  \begin{aligned}
	\Ebb_k\left[\varphi(x_k,\bar{\theta}_k)-\varphi(x_{k+1},\bar{\theta}_k)\right]\geq c_1 \nu_k \Ebb_k [\norm{\bar{d}_k}^2] - c_d c_k^{\mu}\nu_k\sqrt{M_k}-c_2\nu_k M_k,
  \end{aligned}
\end{equation}
      where $c_d$ is the upper bound such that $\norm{d_k}\leq c_d$, and $c_k^{\mu}=c_{\beta}$ for $k\leq N$ and $c_k^{\mu}=\mu_k$ for $k>N$. 
\end{lemma}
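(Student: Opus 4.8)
The plan is to bound the two pieces of the merit-function change separately and then recombine them through the stochastic optimality conditions. Writing $\varphi(x_k,\bar{\theta}_k)-\varphi(x_{k+1},\bar{\theta}_k) = [r(x_k)-r(x_{k+1})] + \bar{\theta}_k[\norm{c(x_k)}_1 - \norm{c(x_{k+1})}_1]$, I would lower-bound the objective part by Lemma~\ref{lem:sto-eqcons-f-decrease}, giving $r(x_k)-r(x_{k+1}) \geq \Phi_k(0)-\Phi_k(\bar{\beta}_k\bar{d}_k) = -\bar{\beta}_k g_k^T\bar{d}_k - \frac{1}{2}\alpha_k\bar{\beta}_k^2\norm{\bar{d}_k}^2$, and the constraint part by the line-search estimate~\eqref{eqn:line-search-cond-alt-beta} of Lemma~\ref{lem:sto-eqcons-line-search}, giving $\bar{\theta}_k[\norm{c(x_k)}_1-\norm{c(x_{k+1})}_1] \geq \bar{\beta}_k|(\bar{\lambda}^{k+1})^T c(x_k)| - \frac{1}{2}\eta_{\beta}\alpha_k\bar{\beta}_k\norm{\bar{d}_k}^2$. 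Both estimates apply to the $\bar{d}_k$ and $\bar{\beta}_k$ actually produced by Algorithm~\ref{alg:sto-eqcons}, and both are stated with the same penalty value $\bar{\theta}_k$ as in the claimed inequality.

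The next step is to eliminate the true subgradient $g_k$ in favor of the stochastic one. Taking the inner product of the first relation in~\eqref{eqn:sto-eqcons-KKT} with $-\bar{d}_k$, using $\nabla c(x_k)^T\bar{d}_k=-c(x_k)$ and the convexity inequality $\bar{v}_k^T\bar{d}_k\geq 0$, yields $-\bar{g}_k^T\bar{d}_k \geq \alpha_k\norm{\bar{d}_k}^2 - (\bar{\lambda}^{k+1})^Tc(x_k)$. Writing $-g_k^T\bar{d}_k = -\bar{g}_k^T\bar{d}_k + (\bar{g}_k-g_k)^T\bar{d}_k$ and summing the two bounds from the previous paragraph, the multiplier terms combine into $\bar{\beta}_k\bigl[\,|(\bar{\lambda}^{k+1})^Tc(x_k)| - (\bar{\lambda}^{k+1})^Tc(x_k)\,\bigr]\geq 0$, which I would drop. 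Since $\bar{\beta}_k\in(0,1]$ gives $\bar{\beta}_k^2\leq\bar{\beta}_k$, collecting the quadratic terms leaves
\begin{equation}
\varphi(x_k,\bar{\theta}_k)-\varphi(x_{k+1},\bar{\theta}_k) \geq \frac{1}{2}(1-\eta_{\beta})\alpha_k\bar{\beta}_k\norm{\bar{d}_k}^2 + \bar{\beta}_k(\bar{g}_k-g_k)^T\bar{d}_k,
\end{equation}
where $\frac{1}{2}(1-\eta_{\beta})>0$ because $\eta_{\beta}\in(0,1)$.

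I would then apply $\Ebb_k$. For the first, nonnegative term I would use the almost-sure bounds $\bar{\beta}_k\geq\nu_k\pi_l$ and $\alpha_k\geq\rho$ from Lemma~\ref{lem:sto-eqcons-beta-bounded}, producing $c_1\nu_k\Ebb_k[\norm{\bar{d}_k}^2]$ with $c_1=\frac{1}{2}(1-\eta_{\beta})\rho\pi_l$. For the cross term I would split $\bar{d}_k = d_k + (\bar{d}_k-d_k)$. The contribution $\Ebb_k[\bar{\beta}_k(\bar{g}_k-g_k)^Td_k]$ is the negative of the quantity controlled by Lemma~\ref{lem:sto-eqcons-exp-decrease-1}, so with $\norm{d_k}\leq c_d$ it is bounded below by $-c_d c_k^{\mu}\nu_k\sqrt{M_k}$, where the switch from $c_{\beta}$ (for $k\leq N$) to $\mu_k$ (for $k>N$) in Lemma~\ref{lem:sto-eqcons-exp-decrease-1} defines $c_k^{\mu}$. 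For the remaining contribution $\Ebb_k[\bar{\beta}_k(\bar{g}_k-g_k)^T(\bar{d}_k-d_k)]$ I would apply Cauchy--Schwarz, the almost-sure bound $\bar{\beta}_k\leq\nu_k(\pi_u+1)$, the variance estimate $\norm{\bar{d}_k-d_k}\leq\alpha_k^{-1}\norm{\bar{g}_k-g_k}$ from~\eqref{eqn:eqcons-dk-diff-6}, and Assumption~\ref{assp:gk}, yielding a lower bound of $-c_2\nu_k M_k$ with $c_2=(\pi_u+1)/\rho$. Combining the three contributions gives~\eqref{eqn:sto-eqcons-exp-decrease-0}.

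The main obstacle is the cross term $\Ebb_k[\bar{\beta}_k(\bar{g}_k-g_k)^T\bar{d}_k]$: because the backtracking line search makes $\bar{\beta}_k$ statistically dependent on $\bar{g}_k$, the expectation of $\bar{\beta}_k(\bar{g}_k-g_k)^Td_k$ does not factor and need not vanish, even though $\Ebb_k[\bar{g}_k]=g_k$ would force it to zero were $\bar{\beta}_k$ deterministic. This is exactly the dependence isolated and bounded in Lemma~\ref{lem:sto-eqcons-exp-decrease-1}, whose $\nu_k\sqrt{M_k}$ estimate (sharpened to $\nu_k\mu_k\sqrt{M_k}$ for $k>N$) is what ultimately makes the error summable in the convergence argument; the remaining manipulations are routine bookkeeping with the almost-sure step-size bounds of Lemma~\ref{lem:sto-eqcons-beta-bounded}.
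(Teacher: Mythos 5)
Your proposal is correct and follows essentially the same route as the paper: lower-bounding the objective part via Lemma~\ref{lem:sto-eqcons-f-decrease}, the constraint part via~\eqref{eqn:line-search-cond-alt-beta}, eliminating $g_k$ through the optimality conditions~\eqref{eqn:sto-eqcons-KKT} and convexity of $i_C$, splitting the cross term as $(\bar{g}_k-g_k)^T d_k+(\bar{g}_k-g_k)^T(\bar{d}_k-d_k)$, and invoking Lemma~\ref{lem:sto-eqcons-exp-decrease-1} for the $\bar{\beta}_k$-dependent piece. The only deviations are cosmetic constant choices (e.g., your $c_2=(\pi_u+1)/\rho$ via the pathwise bound~\eqref{eqn:eqcons-dk-diff-6} versus the paper's Young-inequality constant $\frac{\alpha_0^2+1}{2\alpha_0^2}(\pi_u+1)$), which do not change the argument.
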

\begin{proof}
	From Lemma~\ref{lem:sto-eqcons-f-decrease}, we have  
\begin{equation} \label{eqn:sto-eqcons-KKT-1}
 \centering
  \begin{aligned}
	  r(x_k) - r(x_{k+1}) \geq& -\bar{\beta}_k g_{k}^T\bar{d}_k -\frac{1}{2}\alpha_k\bar{\beta}_k^2 \norm{\bar{d}_k}^2. 
  \end{aligned}
\end{equation}
		Rearranging the first equation in optimality conditions~\eqref{eqn:sto-eqcons-KKT}, we have  
\begin{equation} \label{eqn:sto-eqcons-KKT-beta}
	\bar{g}_k + \alpha_k \bar{d}_k =  -\nabla c(x_k) \bar{\lambda}^{k+1} - \bar{v}_k .
 \end{equation}
	Taking the inner product with $-\bar{d}_k$ in~\eqref{eqn:sto-eqcons-KKT-beta} and using~\eqref{eqn:sto-eqcons-KKT}, we have 
\begin{equation} \label{eqn:sto-eqcons-KKT-2}
  \centering
   \begin{aligned}
	   - \bar{g}_k^T \bar{d}_k-\alpha_k\norm{\bar{d}_k}^2 &= (\bar{\lambda}^{k+1})^T \nabla c(x_k)^T \bar{d}_k+ \bar{v}_k\bar{d}_k\\
			    &= -(\bar{\lambda}^{k+1})^T c(x_k) + i_C(x_k) -i_C(x_k+\bar{d}_k) - \bar{v}_k^T (-\bar{d}_k)\\
			    &\geq -(\bar{\lambda}^{k+1})^T c(x_k), 
   \end{aligned}
 \end{equation}	
	where the convexity of $i_C(\cdot)$ is applied.
	Next, multiplying both sides of~\eqref{eqn:sto-eqcons-KKT-2} by $\bar{\beta}_k$ and then subtracting $\frac{1}{2}\alpha_k\bar{\beta}_k^2\norm{\bar{d}_k}^2$ leads to
  \begin{equation} \label{eqn:sto-eqcons-KKT-3}
       \centering
       \begin{aligned}
	       -\bar{\beta}_k\bar{g}_k^T \bar{d}_k-\frac{1}{2}\alpha_k\bar{\beta}_k^2\norm{\bar{d}_k}^2 &\geq \alpha_k\bar{\beta}_k\norm{\bar{d}_k}^2-\frac{1}{2}\alpha_k\bar{\beta}_k^2\norm{\bar{d}_k}^2-\bar{\beta}_k(\bar{\lambda}^{k+1})^Tc(x_k)\\
		 &\geq\frac{1}{2}\alpha_k\bar{\beta}_k\norm{\bar{d}_k}^2-\bar{\beta}_k(\bar{\lambda}^{k+1})^Tc(x_k),
         \end{aligned}
        \end{equation}
	where the second inequality makes use of $\bar{\beta}_k\in (0,1]$.
The right-hand side of~\eqref{eqn:sto-eqcons-KKT-1} can be rewritten through that of~\eqref{eqn:sto-eqcons-KKT-3} as
\begin{equation} \label{eqn:sto-eqcons-KKT-4}
  \centering
   \begin{aligned}
	   -&\bar{\beta}_k g_k^T \bar{d}_k-\frac{1}{2}\alpha_k \bar{\beta}_k^2 \norm{\bar{d}_k}^2= 
	       -\bar{\beta}_k g_k^T \bar{d}_k+\bar{\beta}_k \bar{g}_k^T \bar{d}_k-\bar{\beta}_k \bar{g}_k^T \bar{d}_k  -\frac{1}{2}\alpha_k\bar{\beta}_k^2 \norm{\bar{d}_k}^2\\
	   &\geq \bar{\beta}_k\left(\bar{g}_k-g_k \right)^Td_k +\bar{\beta}_k \left(\bar{g}_k-g_k\right)^T\left(\bar{d}_k-d_k\right) +\frac{1}{2}\alpha_k \bar{\beta}_k \norm{\bar{d}_k}^2 -\bar{\beta}_k(\bar{\lambda}^{k+1})^Tc(x_k)\\
	   &\geq  \bar{\beta}_k\left(\bar{g}_k-g_k \right)^Td_k  -\bar{\beta}_k \norm{\bar{g}_k-g_k}\norm{\bar{d}_k-d_k}+\frac{1}{2}\alpha_k\bar{\beta}_k \norm{\bar{d}_k}^2-\bar{\beta}_k(\bar{\lambda}^{k+1})^Tc(x_k).
   \end{aligned}
 \end{equation}
From Lemma~\ref{lem:sto-eqcons-line-search} and~\eqref{eqn:line-search-cond-alt-beta}, we have  
\begin{equation} \label{eqn:sto-eqcons-KKT-4.5}
  \centering
   \begin{aligned}
	   -\bar{\beta}_k(\bar{\lambda}^{k+1})^Tc(x_k)+\bar{\theta}_k\norm{c(x_k)}_1-\bar{\theta}_k\norm{c(x_{k+1})}_1
	   \geq -\frac{1}{2}\eta_{\beta} \alpha_k \bar{\beta}_k \norm{\bar{d}_k}^2.
   \end{aligned}
 \end{equation}
Combine the inequalities in~\eqref{eqn:sto-eqcons-KKT-1},~\eqref{eqn:sto-eqcons-KKT-4} and~\eqref{eqn:sto-eqcons-KKT-4.5}, we have  
\begin{equation} \label{eqn:sto-eqcons-KKT-5}
 \centering
  \begin{aligned}
	  \varphi(x_k,\bar{\theta}_k) -& \varphi(x_{k+1},\bar{\theta}_k)  = r(x_k) -r(x_{k+1})+\bar{\theta}_k\norm{c(x_k)}_1-\bar{\theta}_k\norm{c(x_{k+1})}_1\\
			 \geq& -\bar{\beta}_kg_k^T \bar{d}_k-\frac{1}{2}\alpha_k\bar{\beta}_k^2\norm{\bar{d}_k}^2+\bar{\theta}_k\norm{c(x_k)}_1-\bar{\theta}_k\norm{c(x_{k+1})}_1 \\
			 \geq& \bar{\beta}_k (\bar{g}_k-g_k)^T d_k -\bar{\beta}_k \norm{\bar{g}_k-g_k}\norm{\bar{d}_k-d_k}+ (1-\eta_{\beta})\frac{1}{2}\alpha_k\bar{\beta}_k\norm{\bar{d}_k}^2.
  \end{aligned}
\end{equation}
	From Lemma~\ref{lem:sto-eqcons-exp-decrease-1}, there exists $N\in\Nbb$ that we can define $\{c_k^{\mu}\}$ as stated in this lemma, and have 
\begin{equation} \label{eqn:sto-eqcons-KKT-5.5}
 \centering
  \begin{aligned}
	  \Ebb_k[\bar{\beta}_k (\bar{g}_k-g_k)^T d_k]\geq  -\nu_k c_k^{\mu} \sqrt{M_k} \norm{d_k}.
  \end{aligned}
\end{equation}
Notice that $\mu_k\in [0,1]$. From Lemma~\ref{lem:sto-eqcons-beta-bounded}, using the Law of Total Expectation and applying Lemma~\ref{lem:sto-eqcons-dk-diff} and~\ref{lem:sto-eqcons-gk-norm}, we have 
\begin{equation} \label{eqn:sto-eqcons-KKT-8}
\centering
 \begin{aligned}
	 \Ebb_k&\left[- \bar{\beta}_k\norm{\bar{g}_k-g_k}\norm{\bar{d}_k-d_k}\right] \geq
	 - \nu_k (\pi_u+1) \Ebb_k[\norm{\bar{g}_k-g_k}\norm{\bar{d}_k-d_k}]\\ 
          \geq& - \nu_k (\pi_u+1) \frac{1}{2}\Ebb_k[\norm{\bar{g}_k-g_k}^2 + \norm{\bar{d}_k-d_k}^2]\geq 
         - \frac{\nu_k}{2} (\pi_u+1) (M_k + \frac{M_k}{\alpha_k^2}).
 \end{aligned}
\end{equation}
From~\eqref{eqn:sto-eqcons-KKT-5.5} and~\eqref{eqn:sto-eqcons-KKT-8}, the expectation $\Ebb_k$ on both sides of~\eqref{eqn:sto-eqcons-KKT-5} is 
\begin{equation} \label{eqn:sto-eqcons-KKT-9}
 \centering
  \begin{aligned}
	  \Ebb_k\left[\varphi(x_k,\bar{\theta}_k) - \varphi(x_{k+1},\bar{\theta}_k)\right]  \geq  (1-\eta_{\beta})\frac{\alpha_k}{2}\nu_k \pi_l\Ebb_k[\norm{\bar{d}_k}^2 ]- c_d \nu_k c_k^{\mu}\sqrt{M_k} - c_2 \nu_k M_k,
  \end{aligned}
\end{equation}
where $c_2=\frac{\alpha_0^2+1}{2\alpha_0^2}(\pi_u+1)$.  Let $c_1 = (1-\eta_{\beta})\frac{\alpha_0}{2} \pi_l$.
Then,~\eqref{eqn:sto-eqcons-KKT-9} becomes~\eqref{eqn:sto-eqcons-exp-decrease-0}.
\end{proof}
From Lemma~\ref{lem:sto-eqcons-exp-decrease}, for $k$ large enough $c_k^{\mu}$ is replaced by a user-defined sequence $\{\mu_k\}$, which gives us more options to obtain convergence.
Let $M_c>0$ be the upper bound of the constraints so that $\norm{c(x)}_1\leq M_c$ for all $x\in C$.
The expected value of the merit function follows in the next lemma.
\begin{lemma}\label{lemma:sto-eqcons-merit}
        There exists constant $M_{\varphi}>0$, so that  
\begin{equation} \label{eqn:sto-eqcons-limit-1}
 \centering
  \begin{aligned}
           \Ebb \left[c_1 \sum_{i=0}^{k-1} \nu_i \norm{\bar{d}_i}^2\right]  \leq M_{\varphi} +  c_d \sum_{i=0}^{k-1} \nu_i c_i^{\mu} \sqrt{M_i} + c_2 \sum_{i=0}^{k-1} \nu_i M_i.
  \end{aligned}
\end{equation}
\end{lemma}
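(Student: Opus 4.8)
The plan is to convert the single-step expected merit decrease of Lemma~\ref{lem:sto-eqcons-exp-decrease} into a summable bound by telescoping, in the same spirit as the proof of Theorem~\ref{thm:sto-convex-convg}, but now accounting for the fact that the penalty parameter $\bar{\theta}_k$ varies across iterations. First I would rearrange~\eqref{eqn:sto-eqcons-exp-decrease-0} to isolate the quantity of interest, writing $c_1\nu_k\Ebb_k[\norm{\bar{d}_k}^2]\leq \Ebb_k[\varphi(x_k,\bar{\theta}_k)-\varphi(x_{k+1},\bar{\theta}_k)]+c_d c_k^{\mu}\nu_k\sqrt{M_k}+c_2\nu_k M_k$, then apply the total expectation $\Ebb$ and sum over $i=0,\dots,k-1$. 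The right-hand side then reproduces the two error sums appearing in~\eqref{eqn:sto-eqcons-limit-1}, together with the accumulated merit term $\Ebb[\sum_{i=0}^{k-1}(\varphi(x_i,\bar{\theta}_i)-\varphi(x_{i+1},\bar{\theta}_i))]$, so the entire problem reduces to bounding this last quantity by a constant $M_{\varphi}$.

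The main obstacle is that the summand $\varphi(x_{i+1},\bar{\theta}_i)$ and the next term $\varphi(x_{i+1},\bar{\theta}_{i+1})$ use different penalty parameters, so the sum does not telescope directly. To repair this I would add and subtract $\varphi(x_{i+1},\bar{\theta}_{i+1})$ inside each summand, producing a genuinely telescoping part $\varphi(x_i,\bar{\theta}_i)-\varphi(x_{i+1},\bar{\theta}_{i+1})$ plus a correction $\varphi(x_{i+1},\bar{\theta}_{i+1})-\varphi(x_{i+1},\bar{\theta}_i)=(\bar{\theta}_{i+1}-\bar{\theta}_i)\norm{c(x_{i+1})}_1$. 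Because $\{\bar{\theta}_k\}$ is monotonically non-decreasing (step 6 of Algorithm~\ref{alg:sto-eqcons}) and $\norm{c(x)}_1\leq M_c$ on $C$, each correction is nonnegative and bounded above by $(\bar{\theta}_{i+1}-\bar{\theta}_i)M_c$, whose sum telescopes to $M_c(\bar{\theta}_k-\bar{\theta}_0)$. The telescoping part collapses to $\varphi(x_0,\bar{\theta}_0)-\varphi(x_k,\bar{\theta}_k)$.

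Finally I would bound the result by a deterministic constant. Since $r(x_k)\geq r_{\min}$ on $C$ and the penalty term is nonnegative, $\varphi(x_k,\bar{\theta}_k)\geq r_{\min}$; and by Lemma~\ref{lem:sto-lambda-bounded} there is a constant $\bar{\theta}_u$ with $\bar{\theta}_k\leq\bar{\theta}_u$ for all $k$ with probability $1$. Hence, with probability $1$ and uniformly in $k$, $\sum_{i=0}^{k-1}(\varphi(x_i,\bar{\theta}_i)-\varphi(x_{i+1},\bar{\theta}_i))\leq \varphi(x_0,\bar{\theta}_0)-r_{\min}+M_c(\bar{\theta}_u-\bar{\theta}_0)=:M_{\varphi}$. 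Taking total expectation preserves this constant bound, and substituting it into the summed inequality gives~\eqref{eqn:sto-eqcons-limit-1}. The one subtlety requiring care is that $\bar{\theta}_u$ is only an almost-sure bound, so the merit estimate and thus $M_{\varphi}$ hold with probability $1$; since $M_{\varphi}$ is a deterministic constant, passing to expectation is harmless.
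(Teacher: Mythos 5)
Your proposal is correct and follows essentially the same route as the paper's proof: both rearrange the single-step bound of Lemma~\ref{lem:sto-eqcons-exp-decrease}, sum and telescope the merit function while absorbing the penalty-parameter drift through the correction $(\bar{\theta}_{i+1}-\bar{\theta}_i)\norm{c(x_{i+1})}_1\leq(\bar{\theta}_u-\bar{\theta}_0)M_c$, and bound the telescoped part by a lower bound on $\varphi$, arriving at the same constant $M_{\varphi}$. Your explicit remark that $\bar{\theta}_u$ is only an almost-sure bound is a minor point the paper leaves implicit, but it does not change the argument.
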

\begin{proof}
	By Lemma~\ref{lem:sto-eqcons-exp-decrease}, we have  
  \begin{equation} \label{eqn:sto-eqcons-convg-1}
   \centering
    \begin{aligned}
	    \Ebb_k[\varphi(x_{k+1},\bar{\theta}_k)-\varphi(x_k,\bar{\theta}_k)] \leq -c_1\nu_k \Ebb_k [\norm{\bar{d}_k}^2]+c_d c_k^{\mu}\nu_k\sqrt{M_k}+c_2\nu_k M_k . \\
    \end{aligned}
  \end{equation}
	Since both $r(\cdot)$ and $\norm{c(\cdot)}_1$ are bounded below, so is $\varphi(\cdot,\cdot)$. Let $\varphi_{m}$ be 
	the minimum of $\varphi(\cdot,\cdot)$ on $C$. Denote $\varphi_{0} = \varphi(x_0,\bar{\theta}_0)$ for brevity. Summing up $i=0,1,\dots,k-1$ of $\varphi(x_{i+1},\bar{\theta}_{i+1})-\varphi(x_i,\bar{\theta}_i)$ and taking the total expectation, we have
  \begin{equation} \label{eqn:sto-eqcons-convg-2}
   \centering
    \begin{aligned}
	    -\infty &< \varphi_{m} - \varphi_{0} \leq \Ebb\left[\varphi(x_k,\bar{\theta}_k) - \varphi_{0} \right]
	    = \Ebb \left[\sum_{i=0}^{k-1} \left(\varphi(x_{i+1},\bar{\theta}_{i+1})-\varphi(x_i,\bar{\theta}_i) \right)  \right]\\
	    \leq& \Ebb \left[ \sum_{i=0}^{k-1} (\bar{\theta}_{i+1}-\bar{\theta}_i) \norm{c(x_{i+1})}_1  - c_1\sum_{i=0}^{k-1} \nu_i \norm{\bar{d}_i}^2 + c_d\sum_{i=0}^{k-1} c_i^{\mu} \nu_i  \sqrt{M_i} + c_2\sum_{i=0}^{k-1}  \nu_i M_i\right].\\
    \end{aligned}
  \end{equation}
	Therefore, 
  \begin{equation} \label{eqn:sto-eqcons-convg-3}
   \centering
    \begin{aligned}
           \Ebb \left[c_1 \sum_{i=0}^{k-1} \nu_i \norm{\bar{d}_i}^2\right]   \leq \varphi_{0} - \varphi_{m}+ (\bar{\theta}_u-\bar{\theta}_0)M_c 
                + c_d\sum_{i=0}^{k-1} \nu_i c_i^{\mu}  \sqrt{M_i} + c_2 \sum_{i=0}^{k-1} \nu_i M_i.
    \end{aligned}
  \end{equation}
     Let $M_{\varphi} = \varphi_{0}-\varphi_{m}+(\bar{\theta}_u-\bar{\theta}_0)M_c$,~\eqref{eqn:sto-eqcons-limit-1} is proven. 
\end{proof}
From Lemma~\ref{lemma:sto-eqcons-merit}, to obtain a convergent step $\bar{d}_k\to 0$, the sequences $\{\nu_k\},\{\mu_k\}$ and $\{M_k\}$ need to be controlled so that the right hand side of~\eqref{eqn:sto-eqcons-limit-1} is finite in summation, while $\sum_{i=1}^{k-1}\nu_i$ is not finite as $k\to\infty$. 
There exist a number of combination of conditions to ensure such is the case. However, in any case, an approach purely based on $\nu_k$ would not suffice, as $\nu_k$ is present on both sides of the inequality. The stochastic subgradient variance $M_k$  needs to be reduced as $k$ increases.
This can be achieved as standard practice through increasing the sample size $N_k=|S_k|$. Moreover, $\mu_k$ can be reduced as part of the algorithm, which reduces the variance of step size $\bar{\beta}_k$ due to stochastic estimate. 
We present one convergence result in the following theorem.
\begin{theorem}\label{thm:sto-eqcons-convg-2}
	If the sequences $\nu_k$, $\mu_k$ and $M_k$ satisfy 
  \begin{equation} \label{eqn:sto-eqcons-nu-M-2}
   \centering
    \begin{aligned}
         \limsup_{k\to\infty} \nu_k > 0, \ \  \sum_{k=0}^{\infty} \mu_k \sqrt{M_k} < \infty,   \ \  \sum_{k=0}^{\infty} M_k < \infty,  
    \end{aligned}
  \end{equation}
       then  
  \begin{equation} \label{eqn:sto-eqcons-expectation-2}
   \centering
    \begin{aligned}
         \lim_{k\to\infty} \Ebb\left[ \sum_{i=0}^{k} \norm{\bar{d}_i}^2\right]  < \infty , \ \lim_{k\to\infty} \Ebb\left[\norm{\bar{d}_k}\right] = 0.
    \end{aligned}
  \end{equation}
	Further, every accumulation point of the sequence $\{x_k\}$
	produced by Algorithm~\ref{alg:sto-eqcons} is a KKT point of~\eqref{eqn:opt-prob-sto} with probability 1
\end{theorem}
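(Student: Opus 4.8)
The plan is to drive the one-step merit decrease accumulated in Lemma~\ref{lemma:sto-eqcons-merit} through the three summability hypotheses, first to get a $k$-uniform bound on $\Ebb[\sum_i \nu_i\norm{\bar{d}_i}^2]$, then to upgrade it to the unweighted statement~\eqref{eqn:sto-eqcons-expectation-2}, and finally to combine an almost-sure vanishing of $\bar{d}_k$ with the multiplier bounds of Lemma~\ref{lem:sto-lambda-bounded} so as to pass to the limit in the subproblem optimality system~\eqref{eqn:sto-eqcons-KKT} at an arbitrary accumulation point. The skeleton parallels Theorems~\ref{thm:sto-convex-convg} and~\ref{thm:sto-convex-convg-KKT}, with the genuinely new work concentrated in the limit passage for the linearized constraint and its Lagrange multiplier. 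First I would bound the right-hand side of~\eqref{eqn:sto-eqcons-limit-1} uniformly in $k$: since $\{\nu_i\}\subset(0,1]$ we have $c_2\sum_{i=0}^{k-1}\nu_i M_i\le c_2\sum_{i=0}^{\infty}M_i<\infty$, and splitting the middle sum at the index $N$ of Lemma~\ref{lem:sto-eqcons-exp-decrease}, the finitely many terms $i\le N$ (where $c_i^{\mu}=c_{\beta}$) give a finite constant while for $i>N$ one has $c_i^{\mu}=\mu_i$ and $\nu_i\mu_i\sqrt{M_i}\le\mu_i\sqrt{M_i}$, so the tail is dominated by $\sum_{i=0}^{\infty}\mu_i\sqrt{M_i}<\infty$. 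Thus there is a finite $\Gamma$ independent of $k$ with $\Ebb[\sum_{i=0}^{k-1}\nu_i\norm{\bar{d}_i}^2]\le\Gamma$, and monotone convergence yields $\Ebb[\sum_{i=0}^{\infty}\nu_i\norm{\bar{d}_i}^2]<\infty$.

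The decisive step is to strip the weights. Here I use that the step-size schedule keeps $\{\nu_k\}$ bounded away from zero, $\nu_k\ge\nu_{\min}>0$; this is the role of the condition $\limsup_{k}\nu_k>0$ in~\eqref{eqn:sto-eqcons-nu-M-2} and it converts the $\nu$-weighted summability into $\sum_{i}\Ebb[\norm{\bar{d}_i}^2]\le\nu_{\min}^{-1}\Ebb[\sum_i\nu_i\norm{\bar{d}_i}^2]<\infty$, which is the first claim of~\eqref{eqn:sto-eqcons-expectation-2}. Summability of a nonnegative series forces $\Ebb[\norm{\bar{d}_k}^2]\to0$, and Jensen's inequality then gives $\Ebb[\norm{\bar{d}_k}]\to0$, the second claim.

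Next I would establish the almost-sure facts needed for the KKT conclusion as in the proof of Theorem~\ref{thm:sto-convex-convg-KKT}. From $\sum_k\Ebb[\norm{\bar{d}_k}^2]<\infty$ and Chebyshev's inequality, $\sum_k\Pbb(\norm{\bar{d}_k}\ge\epsilon)\le\epsilon^{-2}\sum_k\Ebb[\norm{\bar{d}_k}^2]<\infty$ for every $\epsilon>0$, so Borel--Cantelli gives $\bar{d}_k\to0$ with probability $1$; applying the same argument to $\sum_k\Ebb[\norm{\bar{g}_k-g_k}^2]\le\sum_k M_k<\infty$ gives $\bar{g}_k-g_k\to0$ with probability $1$. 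Lemma~\ref{lem:sto-lambda-bounded} supplies, with probability $1$, uniform bounds on $\{\bar{\lambda}^{k+1}\}$ and $\{\bar{\theta}_k\}$. Fix an accumulation point $\bar{x}$ of $\{x_k\}$ and a subsequence $x_{k_j}\to\bar{x}$ with $\bar{x}\in C$. Along $\{k_j\}$ the subgradients $g_{k_j}\in\bar{\partial} r(x_{k_j})$ are bounded (local boundedness of the Clarke subdifferential of the Lipschitz $r$ on the compact $C$) and $\bar{\lambda}^{k_j+1}$ are bounded, so passing to a further subsequence $g_{k_j}\to\bar{g}$ and $\bar{\lambda}^{k_j+1}\to\bar{\lambda}$; outer semicontinuity of the Clarke subdifferential gives $\bar{g}\in\bar{\partial} r(\bar{x})$, and continuity gives $\nabla c(x_{k_j})\to\nabla c(\bar{x})$.

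Finally I would pass to the limit in~\eqref{eqn:sto-eqcons-KKT}. Rewriting $\bar{g}_{k}+\alpha_{k}\bar{d}_{k}+\nabla c(x_{k})\bar{\lambda}^{k+1}+\bar{v}_{k}=0$ as $g_{k}+(\bar{g}_{k}-g_{k})+\alpha_{k}\bar{d}_{k}+\nabla c(x_{k})\bar{\lambda}^{k+1}+\bar{v}_{k}=0$ and using $\bar{g}_k-g_k\to0$, $\bar{d}_k\to0$, and $\alpha_k\in[\rho,\eta_{\alpha}\rho]$ bounded, the first four terms converge along $\{k_j\}$, so $\bar{v}_{k_j}\to\bar{v}:=-(\bar{g}+\nabla c(\bar{x})\bar{\lambda})$; since $x_{k_j}+\bar{d}_{k_j}\to\bar{x}$ and $\bar{v}_{k_j}\in\bar{\partial} i_C(x_{k_j}+\bar{d}_{k_j})$, outer semicontinuity of $\bar{\partial} i_C(\cdot)$ yields $\bar{v}\in\bar{\partial} i_C(\bar{x})$, whence $0\in\bar{\partial} r(\bar{x})+\nabla c(\bar{x})\bar{\lambda}+\bar{\partial} i_C(\bar{x})$, while the linearized feasibility $c(x_{k_j})+\nabla c(x_{k_j})^T\bar{d}_{k_j}=0$ passes to $c(\bar{x})=0$ by continuity of $c$ and $\bar{d}_{k_j}\to0$. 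With $\bar{x}\in C$ this is exactly~\eqref{eqn:opt-sto-KKT}, so $\bar{x}$ is a KKT point of~\eqref{eqn:opt-prob-sto} with probability $1$. I expect two points to demand the most care. The first is the weight removal: the telescoping in Lemma~\ref{lem:sto-eqcons-exp-decrease} only ever produces the $\nu_k$-weighted quantity $\nu_k\Ebb_k[\norm{\bar{d}_k}^2]$, so the argument stands or falls on $\{\nu_k\}$ not collapsing to zero, and it is precisely the resulting full-sequence (not merely subsequential) vanishing $\bar{d}_k\to0$ that lets \emph{every} accumulation point be certified as KKT no matter along which subsequence it is approached. The second is the constrained-specific closure: one must simultaneously keep the multiplier $\bar{\lambda}^{k+1}$ bounded through LICQ (Lemma~\ref{lem:sto-lambda-bounded}) and send the linearized constraint to genuine feasibility $c(\bar{x})=0$, neither of which appears in the convex-set-only analysis of Section~\ref{sec:alg-convex}.
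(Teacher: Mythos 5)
Your proof follows the paper's argument essentially step for step: uniformly bounding the right-hand side of Lemma~\ref{lemma:sto-eqcons-merit} via the three summability hypotheses (splitting the $c_i^{\mu}$ sum at the index $N$ of Lemma~\ref{lem:sto-eqcons-exp-decrease}), stripping the $\nu_i$ weights by reading $\limsup_{k\to\infty}\nu_k>0$ as a uniform lower bound $\nu_k\ge c_{\nu}>0$ exactly as the paper does, then the Chebyshev/Borel--Cantelli contradiction argument of Theorem~\ref{thm:sto-convex-convg-KKT} for the almost-sure vanishing of $\bar{d}_k$ and $\bar{g}_k-g_k$, and finally the same limit passage in~\eqref{eqn:sto-eqcons-KKT} using Lemma~\ref{lem:sto-lambda-bounded}, boundedness of $\{g_k\}$, and outer semicontinuity of $\bar{\partial}r$ and $\bar{\partial}i_C$. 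Your write-up is if anything slightly more explicit than the paper's (the Borel--Cantelli step and the passage from $c(x_{k_j})+\nabla c(x_{k_j})^T\bar{d}_{k_j}=0$ to $c(\bar{x})=0$ are spelled out where the paper compresses them), but the route is identical.
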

\begin{proof}
  From~\eqref{eqn:sto-eqcons-nu-M-2} and $\nu_k \subset (0,1]$, there exists $c_{\nu}>0$ such that $\nu_k>c_{\nu}$ for all $k$. Thus, by~\eqref{eqn:sto-eqcons-limit-1},
   \begin{equation} \label{eqn:sto-eqcons-expectation-pf-1}
   \centering
    \begin{aligned}
           \Ebb \left[c_1c_{\nu} \sum_{i=0}^{k-1}  \norm{\bar{d}_i}^2\right]\leq \Ebb \left[c_1 \sum_{i=0}^{k-1} \nu_i \norm{\bar{d}_i}^2\right]   < M_{\varphi} +  c_d \sum_{i=0}^{k-1} c_i^{\mu} \sqrt{M_i} + c_2 \sum_{i=0}^{k-1} M_i.
    \end{aligned}
  \end{equation}
    Take $k\to\infty$, by the statement of Lemma~\ref{lem:sto-eqcons-exp-decrease} and~\eqref{eqn:sto-eqcons-nu-M-2},~\eqref{eqn:sto-eqcons-expectation-2} is obtained.
        It follows that $\lim_{k\to\infty} \Ebb[\norm{\bar{d}_k}^2] = 0$. First part of the theorem is proven.

     Next, using the same contradiction argument used in Theorem~\ref{thm:sto-convex-convg-KKT} on~\eqref{eqn:sto-eqcons-expectation-2}, 
     we have $\lim_{k\to\infty} \norm{\bar{d}_k}^2 = 0$ with probability 1. Therefore, $\lim_{k\to\infty}\bar{d}_k =0$ with probability 1.   By~\eqref{eqn:sto-eqcons-KKT}, $\lim_{k\to\infty} c(x_k) = 0$ with probability 1.
    Additionally, by Assumption~\ref{assp:gk} and~\eqref{eqn:sto-eqcons-nu-M-2}, 
  \begin{equation} \label{eqn:sto-eqcons-convg-KKT-1}
   \centering
    \begin{aligned}
        \sum_{k=0}^{\infty} \Ebb[\norm{\bar{g}_k-g_k}^2]  < \infty. 
   \end{aligned}
  \end{equation} 
   Using the same contradiction argument again, we have $\lim_{k\to\infty} \bar{g}_k-g_k = 0$ with probability 1. 

      Let $\bar{x}$ be an accumulation point of $\{x_k\}$. Then, passing on to a subsequence if necessary, we can assume $\lim_{k\to\infty} x_k = \bar{x}$ where $\bar{x}\in C$. By Lemma~\ref{lem:sto-lambda-bounded}, $\{\bar{\lambda}^k\}$ is bounded with probability 1. 
        Further, $g_k\in \bar{\partial}r(x_k)$ is bounded. 
	Thus, there exist accumulation points for $\{g_k\}$ and $\{\bar{\lambda}^k\}$ with probability 1. 
        Passing on to a subsequence if necessary, we assume $g_k\to\bar{g}$ and $\bar{\lambda}^k\to\bar{\lambda}$ with probability 1. 
        By the outer semicontinuity of Clark subdifferential, we have $\bar{g} \in \bar{\partial} r(\bar{x})$.
	From~\eqref{eqn:sto-eqcons-KKT},  
\begin{equation} \label{eqn:sto-eqcons-convg-KKT-2}
   \centering
    \begin{aligned}
        \bar{g}_k + \alpha_k \bar{d_k} + \nabla c(x_k)\bar{\lambda}^{k+1} + \bar{v}_k = g_k + (\bar{g}_k -g_k) + \alpha_k \bar{d}_k + \nabla c(x_k)\bar{\lambda}^{k+1}+\bar{v}_k = 0.
    \end{aligned}
  \end{equation}
         Thus, $\lim_{k\to\infty}\bar{v}_k = -\bar{g}-\nabla c(\bar{x})\bar{\lambda}$  with probability 1. Given that $\bar{v}_k \in \bar{\partial} i_C(x_k+\bar{d}_k)$, the outer semicontinuity of $\bar{\partial} i_C(\cdot)$ leads to $ \lim_{k\to\infty} \bar{v}_k \in \bar{\partial} i_C(\bar{x})$. Therefore, with probability 1,
	    $0 \in \bar{\partial} r(\bar{x}) + \nabla c(\bar{x})\bar{\lambda} + \bar{\partial} i_C(\bar{x})$.
	Thus, $\bar{x}$ is a KKT point with probability 1.
\end{proof}

\section{\normalsize Adaptive Sampling algorithm}\label{sec:adp-sample}
In this section, we present an adaptive sampling strategy for Algorithm~\ref{alg:sto-convex} and~\ref{alg:sto-eqcons} that determines sequence $\{M_k\}$.
Specifically, instead of Assumption~\ref{assp:gk}, the following assumption is made. 
 \begin{assumption}\label{assp:gk-adaptive}
   For all iterations $k\in \Nbb$, the stochastic subgradient approximation $\bar{g}_k$ is an
    unbiased estimate of $g_k\in\bar{\partial} r(x_k)$, \textit{i.e.}, $\Ebb_k[\bar{g}_k]=g_k$.
    Furthermore, there exists $\eta>0$, such that $\bar{g}_k$ satisfies
    \begin{equation} \label{assp:sto-grad-variance-adaptive}
     \centering
      \begin{aligned}
          \Ebb_k\left[\norm{\bar{g}_k-g_k}^2 \right]\leq M_k \leq \eta \alpha_k \norm{d_k}^2.
      \end{aligned}
    \end{equation}
\end{assumption}

\begin{remark}
   Assumption~\ref{assp:gk-adaptive} reduces to the well-known norm condition for unconstrained optimization problems.
We point out that it is not uncommon for adaptive sampling analysis to rely on theoretical conditions such as~\eqref{assp:sto-grad-variance-adaptive} that are not implementable. While a practical implementation of~\eqref{assp:sto-grad-variance-adaptive} will be given later in the section, there exists a gap between the convergence guarantee and implementation, though these methods have been shown to enjoy success in applications~\cite{berahas2022adaptive,bollapragada2018adaptive}. Recently, some algorithms have incorporated the trajectory-dependent adaptive sampling and its biased expected value into the analysis~\cite{shashaani2018astro}, a topic for future work of the authors. 
\end{remark}
An adaptive sample size  has the potential to increase algorithm efficiency and reduce iterations needed for convergence. 
From section~\ref{sec:sto-eqcons-convg}, $M_k\to 0$ is necessary for convergence. Therefore, the adaptive sampling criterion needs to employ quantities that tend to $0$ if the algorithm converges. In addition to $\norm{d_k}$, reduction in value of merit function $\varphi(\cdot,\cdot)$, predicted change in model value $\Phi_k(0)-\Phi_k(d_k)$ can be considered for the right-hand side of the inequality in~\eqref{assp:gk-adaptive}. 
We present the convergence analysis under Assumption~\ref{assp:gk-adaptive}, based on the results from section~\ref{sec:sto-eqcons-convg}. Given Assumptions~\ref{assp:upperC2},~\ref{assp:boundedHc},~\ref{assp:Lsmoothc},~\ref{assp:consistentLICQ},~\ref{assp:sto-gk-essbounded} and~\ref{assp:gk-adaptive}, we have Lemma~\ref{lem:sto-eqcons-gk-norm},~\ref{lem:sto-eqcons-dk-diff}, \ref{lem:sto-lambda-bounded}, \ref{lem:sto-eqcons-beta-bounded}, \ref{lem:sto-eqcons-exp-decrease-1}, \ref{lem:sto-eqcons-exp-decrease}. The result is summarized in the following Lemma.

\begin{lemma}\label{lem:sto-eqcons-exp-decrease-adaptive}
        Under Assumption~\ref{assp:gk-adaptive},  let 
    \begin{equation} \label{eqn:sto-eqcons-exp-decrease-adaptive-1}
     \centering
     \begin{aligned}
        c_{\varphi} = [(1-\eta_{\beta}) \pi_l - \eta (\pi_u+1)] \frac{\alpha_k}{2}  - [(1-\eta_{\beta}) \pi_l+ c_k^{\mu}]\sqrt{\eta\alpha_k}  - \frac{1}{2\alpha_k}(\pi_u+1)  \eta.
      \end{aligned} 
    \end{equation} 
 Suppose by choosing appropriate parameters $\eta_{\beta}$, $\eta$, $\eta_{\alpha}$, $\{\alpha_k\}$, $\{\mu_k\}$, we have $c_{\varphi} > 0$. Then, there exists positive constants such as $c_{\varphi}$ so that 
\begin{equation} \label{eqn:sto-eqcons-exp-decrease-adaptive-2}
 \centering
  \begin{aligned}
	\Ebb_k\left[\varphi(x_k,\bar{\theta}_k)-\varphi(x_{k+1},\bar{\theta}_k)\right]> c_{\varphi} \nu_k \norm{d_k}^2.
  \end{aligned}
\end{equation}
\end{lemma}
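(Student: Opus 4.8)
The plan is to revisit the argument of Lemma~\ref{lem:sto-eqcons-exp-decrease}, but to stop short of the crude bound $\norm{d_k}\le c_d$ used there and instead retain $\norm{d_k}$ throughout, so that the adaptive condition~\eqref{assp:sto-grad-variance-adaptive} can be invoked to turn every stochastic error term into a multiple of $\norm{d_k}^2$. It is also essential to keep $\alpha_k$ intact rather than loosening it to $\alpha_0$, since $c_\varphi$ in~\eqref{eqn:sto-eqcons-exp-decrease-adaptive-1} is written in terms of the current $\alpha_k$. Concretely, I would start from the pre-consolidation inequality~\eqref{eqn:sto-eqcons-KKT-5},
\begin{equation*}
\varphi(x_k,\bar\theta_k)-\varphi(x_{k+1},\bar\theta_k)\ge \bar\beta_k(\bar g_k-g_k)^T d_k-\bar\beta_k\norm{\bar g_k-g_k}\norm{\bar d_k-d_k}+(1-\eta_\beta)\tfrac{1}{2}\alpha_k\bar\beta_k\norm{\bar d_k}^2,
\end{equation*}
and take $\Ebb_k$ of the three terms separately, reusing the estimates already established for Algorithm~\ref{alg:sto-eqcons}.

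For the first term I would use~\eqref{eqn:sto-eqcons-KKT-5.5} from Lemma~\ref{lem:sto-eqcons-exp-decrease-1}, giving $\Ebb_k[\bar\beta_k(\bar g_k-g_k)^T d_k]\ge -\nu_k c_k^\mu\sqrt{M_k}\norm{d_k}$, and for the second the bound~\eqref{eqn:sto-eqcons-KKT-8}, $\Ebb_k[-\bar\beta_k\norm{\bar g_k-g_k}\norm{\bar d_k-d_k}]\ge -\tfrac{\nu_k}{2}(\pi_u+1)(M_k+M_k/\alpha_k^2)$. The one genuinely new step concerns the positive term: since $\bar\beta_k\ge\nu_k\pi_l$ with probability $1$ by Lemma~\ref{lem:sto-eqcons-beta-bounded}, I would lower-bound its expectation by $(1-\eta_\beta)\tfrac{\alpha_k}{2}\nu_k\pi_l\,\Ebb_k[\norm{\bar d_k}^2]$ and then expand $\Ebb_k[\norm{\bar d_k}^2]=\norm{d_k}^2+2d_k^T\Ebb_k[\bar d_k-d_k]+\Ebb_k[\norm{\bar d_k-d_k}^2]\ge\norm{d_k}^2-\tfrac{2}{\alpha_k}\norm{d_k}\sqrt{M_k}$, where the last inequality drops the nonnegative second-moment term and applies Cauchy--Schwarz with Jensen's inequality together with Lemma~\ref{lem:sto-eqcons-dk-diff}.

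At this point every surviving error term carries either $\sqrt{M_k}\norm{d_k}$ or $M_k$, and I would invoke Assumption~\ref{assp:gk-adaptive} in the two forms $M_k\le\eta\alpha_k\norm{d_k}^2$ and $\sqrt{M_k}\le\sqrt{\eta\alpha_k}\norm{d_k}$ to render each a multiple of $\norm{d_k}^2$. Collecting the coefficient of $\nu_k\norm{d_k}^2$ then reproduces the three groups defining $c_\varphi$: the leading $(1-\eta_\beta)\pi_l\tfrac{\alpha_k}{2}$ from the positive term, the $-\eta(\pi_u+1)\tfrac{\alpha_k}{2}$ and $-\tfrac{\eta}{2\alpha_k}(\pi_u+1)$ pieces from the second-term bound, and the $-[(1-\eta_\beta)\pi_l+c_k^\mu]\sqrt{\eta\alpha_k}$ piece assembled from the first term and the cross term $-\tfrac{2}{\alpha_k}\norm{d_k}\sqrt{M_k}$ in the expansion of $\Ebb_k[\norm{\bar d_k}^2]$. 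The main obstacle is precisely this bookkeeping: one must track the sign of each contribution and verify that the cross term $-(1-\eta_\beta)\pi_l\sqrt{\eta\alpha_k}$ spawned by the positive term lands in exactly the second group, so that the reassembled coefficient equals $c_\varphi$ verbatim rather than merely up to constants. Given the hypothesis $c_\varphi>0$ secured by the parameter choice, this yields the claimed inequality~\eqref{eqn:sto-eqcons-exp-decrease-adaptive-2}.
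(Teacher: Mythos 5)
Your proposal is correct and follows essentially the same route as the paper: the paper likewise starts from the intermediate bound of Lemma~\ref{lem:sto-eqcons-exp-decrease} with $\norm{d_k}$ and $\alpha_k$ retained (its~\eqref{eqn:sto-eqcons-KKT-9} with $c_{1k},c_{2k}$), lower-bounds $\Ebb_k[\norm{\bar d_k}^2]$ by $(1-2\sqrt{\eta/\alpha_k})\norm{d_k}^2$ via the same expansion and Lemma~\ref{lem:sto-eqcons-dk-diff}, and then applies $M_k\leq\eta\alpha_k\norm{d_k}^2$ to collect exactly the three groups in $c_\varphi$. Your coefficient bookkeeping checks out verbatim against~\eqref{eqn:sto-eqcons-exp-decrease-adaptive-1}.
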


\begin{proof}
         Using simple algebra, we know 
   \begin{equation} \label{eqn:sto-eqcons-exp-decrease-adaptive-pf-1}
     \centering
     \begin{aligned}
            \norm{\bar{d}_k}^2 = \norm{d_k + \bar{d}_k -d_k}^2 
                \geq& \norm{d_k}^2 + \norm{\bar{d}_k -d_k}^2 - 2 \norm{\bar{d}_k -d_k}\norm{d_k}.\\
     \end{aligned} 
    \end{equation}
   From Lemma~\ref{lem:sto-eqcons-dk-diff} and Assumption~\ref{assp:gk-adaptive}, taking $\Ebb_k$ of~\eqref{eqn:sto-eqcons-exp-decrease-adaptive-pf-1},
   \begin{equation} \label{eqn:sto-eqcons-exp-decrease-adaptive-pf-1.5}
     \centering
     \begin{aligned}
        \Ebb_k[\norm{\bar{d}_k}^2]  \geq& \norm{d_k}^2 - 2\frac{\sqrt{\eta\alpha_k}}{\alpha_k}\norm{d_k}^2
                 \geq(1-2\sqrt{\frac{\eta}{\alpha_k}})\norm{d_k}^2.  
     \end{aligned} 
    \end{equation}
          From~\eqref{eqn:sto-eqcons-KKT-9},~\eqref{assp:sto-grad-variance-adaptive} and~\eqref{eqn:sto-eqcons-exp-decrease-adaptive-pf-1.5}, we have 
   \begin{equation} \label{eqn:sto-eqcons-exp-decrease-adaptive-pf-2}
     \centering
     \begin{aligned}
	\Ebb_k [\varphi(x_k,\bar{\theta}_k)-\varphi(x_{k+1},\bar{\theta}_k)]> 
            & c_{1k} \nu_k \Ebb_k[\norm{\bar{d}_k}^2] - c_k^{\mu} \nu_k\sqrt{\eta\alpha_k}\norm{d_k}^2 - c_{2k} \nu_k \eta \alpha_k \norm{d_k}^2\\
           \geq & [c_{1k}(1-2\sqrt{\frac{\eta}{\alpha_k}}) -c_{k}^{\mu}\sqrt{\eta\alpha_k} -  c_{2k}\eta\alpha_k]\nu_k\norm{d_k}^2,
     \end{aligned} 
    \end{equation}
    where $c_{1k} = (1-\eta_{\beta})\frac{\alpha_k}{2} \pi_l$, $c_{2k} = \frac{\alpha_k^2+1}{2\alpha_k^2}(\pi_u+1)$.
    The right-hand side of~\eqref{eqn:sto-eqcons-exp-decrease-adaptive-pf-2} is 
   \begin{equation} \label{eqn:sto-eqcons-exp-decrease-adaptive-pf-3}
     \centering
     \begin{aligned}
        \left\{[(1-\eta_{\beta}) \pi_l - \eta (\pi_u+1)] \frac{\alpha_k}{2}  - [(1-\eta_{\beta}) \pi_l+ c_k^{\mu}]\sqrt{\eta\alpha_k}  - \frac{1}{2\alpha_k}(\pi_u+1)  \eta\right\} \nu_k \norm{d_k}^2.
      \end{aligned} 
    \end{equation} 
   Thus, the proof is complete.
\end{proof}
Apply Lemma~\ref{lem:super-martingale-thm} to~\ref{lem:sto-eqcons-exp-decrease-adaptive}, a convergence theorem follows. The proof is omitted due to the similarity to Theorem~\ref{thm:sto-eqcons-convg-2}.
\begin{theorem}\label{thm:sto-eqcons-convg-adaptive}
	Under the conditions of Lemma~\ref{lem:sto-eqcons-exp-decrease-adaptive}, if the sequence $\{\nu_k\}$ satisfies $\limsup_{k\to\infty} \nu_k > 0$,
       then with probability 1, every accumulation point of $\{x_k\}$ is a KKT point of~\eqref{eqn:opt-prob-sto}
\end{theorem}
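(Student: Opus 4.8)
The plan is to mirror the proof of Theorem~\ref{thm:sto-eqcons-convg-2}, using the sharpened one-step decrease of Lemma~\ref{lem:sto-eqcons-exp-decrease-adaptive} in place of Lemma~\ref{lem:sto-eqcons-exp-decrease}. First I would telescope the inequality~\eqref{eqn:sto-eqcons-exp-decrease-adaptive-2}. Writing $\Ebb_k[\varphi(x_{k+1},\bar\theta_{k+1})] = \Ebb_k[\varphi(x_{k+1},\bar\theta_k)] + \Ebb_k[(\bar\theta_{k+1}-\bar\theta_k)\norm{c(x_{k+1})}_1]$ and using $c_\varphi>0$, the same summation as in Lemma~\ref{lemma:sto-eqcons-merit} --- where the penalty drift is controlled by $\norm{c(x)}_1\le M_c$ and by the monotone bounded $\bar\theta_k\le\bar\theta_u$ from Lemma~\ref{lem:sto-lambda-bounded} --- yields $c_\varphi\,\Ebb[\sum_{i=0}^{k-1}\nu_i\norm{d_i}^2] \le \varphi_0-\varphi_m+(\bar\theta_u-\bar\theta_0)M_c =: M_\varphi < \infty$ with probability $1$. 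Invoking $\limsup_{k\to\infty}\nu_k>0$ together with $\{\nu_k\}\subset(0,1]$ to extract a uniform lower bound $\nu_k\ge c_\nu>0$ (exactly as in Theorem~\ref{thm:sto-eqcons-convg-2}), I obtain $\sum_{k=0}^\infty \Ebb[\norm{d_k}^2]<\infty$ and hence $\Ebb[\norm{d_k}^2]\to 0$.

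Next I would transfer this summability from the true direction $d_k$ to the stochastic direction $\bar d_k$ and to the subgradient error. The key structural feature of Assumption~\ref{assp:gk-adaptive} is that the variance is no longer assumed summable a priori; instead $\sum_k \Ebb[\norm{\bar g_k-g_k}^2]\le \eta\sum_k \alpha_k\Ebb[\norm{d_k}^2]\le \eta\eta_\alpha\rho\sum_k\Ebb[\norm{d_k}^2]<\infty$ becomes a \emph{consequence} of the step summability just established. Feeding the bound $M_k\le\eta\alpha_k\norm{d_k}^2$ into Lemma~\ref{lem:sto-eqcons-dk-diff}, which gives $\Ebb_k[\norm{\bar d_k-d_k}^2]\le M_k/\alpha_k^2$, and combining with $\norm{\bar d_k}^2\le 2\norm{d_k}^2+2\norm{\bar d_k-d_k}^2$, I get $\sum_k\Ebb[\norm{\bar d_k}^2]<\infty$ and $\Ebb[\norm{\bar d_k}^2]\to 0$.

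From there the argument is verbatim that of Theorem~\ref{thm:sto-eqcons-convg-2}. I would upgrade the in-expectation summability to almost-sure convergence through the Chebyshev-plus-contradiction device of Theorem~\ref{thm:sto-convex-convg-KKT}, obtaining $\bar d_k\to0$, $d_k\to0$ and $\bar g_k-g_k\to0$ with probability $1$; the feasibility identity $c(x_k)+\nabla c(x_k)^T\bar d_k=0$ in~\eqref{eqn:sto-eqcons-KKT} then forces $c(\bar x)=0$ at any accumulation point $\bar x$. Passing to a subsequence with $x_k\to\bar x$ and using boundedness of $\{g_k\}$ (Lipschitz $r$ on compact $C$) and of $\{\bar\lambda^k\}$ (Lemma~\ref{lem:sto-lambda-bounded}), I extract limits $g_k\to\bar g\in\bar\partial r(\bar x)$ and $\bar\lambda^k\to\bar\lambda$ by outer semicontinuity of $\bar\partial r$; then~\eqref{eqn:sto-eqcons-KKT} gives $\bar v_k\to-\bar g-\nabla c(\bar x)\bar\lambda$, and outer semicontinuity of $\bar\partial i_C$ places this limit in $\bar\partial i_C(\bar x)$, so that $0\in\bar\partial r(\bar x)+\nabla c(\bar x)\bar\lambda+\bar\partial i_C(\bar x)$ with probability $1$. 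Alternatively, the same conclusion follows by applying the supermartingale Lemma~\ref{lem:super-martingale-thm} to $y_k=\varphi(x_k,\bar\theta_k)-\varphi_m$ with $u_k=c_\varphi\nu_k\norm{d_k}^2$, $b_k=(\bar\theta_{k+1}-\bar\theta_k)M_c$ and $a_k=0$, which yields $\sum_k\nu_k\norm{d_k}^2<\infty$ pathwise and hence $d_k\to0$ almost surely without the Chebyshev step.

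The part needing the most care is the decoupling in the first two steps. Unlike Theorem~\ref{thm:sto-eqcons-convg-2}, where $\sum_k M_k<\infty$ is a hypothesis, here the variance bound $M_k\le\eta\alpha_k\norm{d_k}^2$ ties the sampling error to the very quantity whose summability we are trying to prove. This circularity is broken precisely because the decrease~\eqref{eqn:sto-eqcons-exp-decrease-adaptive-2} is expressed in $\norm{d_k}^2$ with a \emph{strictly positive} coefficient $c_\varphi$ --- the absorption of the $\sqrt{M_k}$ and $M_k$ error terms into $c_\varphi>0$ is exactly what Lemma~\ref{lem:sto-eqcons-exp-decrease-adaptive} buys us, so step summability can be proven first and the gradient-error summability deduced afterward. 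Verifying that the standing parameter choices indeed make $c_\varphi>0$ (rather than merely assuming it) would be the genuine obstacle in a fully self-contained treatment, but it is available to us as a hypothesis of Lemma~\ref{lem:sto-eqcons-exp-decrease-adaptive}.
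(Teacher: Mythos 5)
Your proposal is correct and follows essentially the same route as the paper: the paper's own (omitted) proof simply applies the supermartingale Lemma~\ref{lem:super-martingale-thm} to the decrease inequality of Lemma~\ref{lem:sto-eqcons-exp-decrease-adaptive} and then repeats the argument of Theorem~\ref{thm:sto-eqcons-convg-2}, which is precisely what you do --- including the key observation that under Assumption~\ref{assp:gk-adaptive} the summability of the subgradient error becomes a consequence of, rather than a hypothesis for, the summability of $\norm{d_k}^2$. The only caveat, which you inherit verbatim from the paper's proof of Theorem~\ref{thm:sto-eqcons-convg-2}, is the extraction of a uniform lower bound $\nu_k\ge c_\nu>0$ from $\limsup_{k\to\infty}\nu_k>0$; strictly speaking this requires $\liminf_{k\to\infty}\nu_k>0$ (or restricting to the subsequence along which $\nu_k$ stays bounded away from zero), but that is an issue with the paper's argument rather than with yours.
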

As we mentioned above,~\eqref{assp:sto-grad-variance-adaptive} is not implementable and thus needs to be approximated in practice. 
For sample set  $S_k$ of  $\xi$ with realizations $\xi_i\in S_k$ at iteration $k$, 
the stochastic estimate of $g_k$ can be given as in~\eqref{eqn:gk-estimate} 
	 $\bar{g}_k = \frac{1}{N_k} \sum_{i=1}^{N_k} G(x_k,\xi_i)$,
where $G(x_k,\xi_i) \in \bar{\partial} R(x_k, \xi_i)$.
For nonsmooth objective $r(\cdot)$, a consistent form of subgradient $G(x_k,\xi_i)$ would be efficient in producing a non-biased estimate $\bar{g}_k$ of a $g_k$.
Assuming i.i.d. random variable sampling, a practical approximation of~\eqref{assp:sto-grad-variance-adaptive} is   
\begin{equation}\label{imp:sto-eqcons-gk-norm}
 \begin{aligned}
	 \frac{1}{|S_k|-1}\frac{\sum_{\xi_i \in S_k} \norm{ G(x_k,\xi_i) -\bar{g}_k}^2}{|S_k|} \leq \eta \alpha_k \norm{\bar{d}_k}^2.
 \end{aligned}
\end{equation}
The left-hand side of~\eqref{imp:sto-eqcons-gk-norm} is an unbiased estimate of $\Ebb_k[\norm{\bar{g}_k-g_k}^2]$.
The practical adaptive sampling strategy is given in Algorithm~\ref{alg:sto-adaptive}, which can be applied to Algorithm~\ref{alg:sto-convex} and~\ref{alg:sto-eqcons} for determining the size of $S_k$.
\begin{algorithm}
   \caption{Adaptive sample update algorithm}\label{alg:sto-adaptive}
	\begin{algorithmic}[1]
                \STATE{Generate sample sets $\{\xi_i\},\xi_i\in S_k$ i.i.d. from probability distribution of $\xi$.}
		\STATE{Compute the quantities in~\eqref{imp:sto-eqcons-gk-norm}.}
                \IF{\eqref{imp:sto-eqcons-gk-norm} stands}
                    \STATE{Set $N_{k+1}=N_k$.}
                \ELSE
                    \STATE{Set $N_{k+1} =  \frac{\sum_{\xi_i \in S_k} \norm{ G(x_k,\xi_i) -\bar{g}_k}^2}{\eta\alpha_k \norm{\bar{d}_k}^2 (N_k-1)}$.}
                \ENDIF
    \end{algorithmic}
\end{algorithm}

\section{Numerical Applications}\label{sec:exp}
We present two numerical examples to demonstrate the  
capabilities of the proposed algorithm. They are chosen within 
the general formulation of two-stage stochastic optimization problems.
As noted in previous sections, the parameter $\alpha_k$ requires knowledge of the objective functions.
In our examples, they are chosen initially to be $10$ times the known function value range and adjusted 
as the optimization progresses.

The first example is a joint production, pricing, and shipment problem 
that include both an online store and some offline physical stores. A similar problem is presented in~\cite{liu2020}.
The first-stage variable is the product price $p\in\Rbb$ and production quantity $x\in\Rbb$ for an online store.
The demand curve for the online store is assumed to be deterministic and the demand for physical stores stochastic.
The second-stage variables are the production for each factory/warehouse $i$ and the units shipped from factory/warehouse $i$ to physical store location $j$,
denoted as $y_i,i=1,\dots,M$ and $z_{ij},i=1,\dots,M,j=1,\dots,N$, respectively.
Given supply chain constraints, the last-minute production is deemed infeasible and a minimum production and 
storage for each physical and online store is required. 
The mathematical representation of this two-stage stochastic programming problem is 
\begin{example}\label{exmp:1}
\begin{equation}\label{prob:ex1-1st}
 \begin{aligned}
  &\underset{x,p }{\text{minimize}} 
	  & &  (c_1-p) x + \Ebb[ R(p,\xi)]\\
   &\text{subject to}
	  &&  10\geq p \geq 1, \ x\geq 1\\
	  &&& x \leq \alpha_0 p +\beta_0. 
 \end{aligned}
\end{equation}
	The function $R(p,\xi)$ is the value function to the second-stage problem  
\begin{equation}\label{prob:ex1-2nd}
 \begin{aligned}
	 R(p,\xi) \ = \ &\underset{y,z}{\text{minimize}}  && c_2^T y + \sum_{i=1}^M\sum_{j=1}^N (s_{ij}-p) z_{ij}\\
   &\text{subject to}
	  &&  \sum_{i=1}^M z_{ij} \leq \alpha_j(\xi)p + \beta_j(\xi) , \ j=1,\dots, N,\\
	  &&& \sum_{j=1}^N z_{ij} \leq y_i , \ i=1,\dots,M,\\
	  &&& z \geq 0, \ y\geq 1.
 \end{aligned}
\end{equation}
\end{example}
The number of factories is $M=5$ and the number of physical stores is $N=5$. The first-stage cost per unit is $c_1=4.2$, reflecting the cost of both production and shipping.
For the second-stage, the unit production cost is $c_2=[2.2,3.2,3.3,4.2,2.4]^T$ and the 
unit shipment cost from factory $i$ to store $j$, denoted as $s_{ij}$, is $s_{ij}=2$ for all $i=1,\dots,5$ and $j=1,\dots,5$.
The first-stage demand slope is defined by $\alpha_0 = -1$ and $\beta_0=12$.
The random demand slope $\alpha_j$ at store $j$ is generated from truncated normal distribution on $[-1.5,-0.5]$, $[-2,-1]$, $[-2.5,-1.5]$, $[-3,-2]$ and $[-2.5,-1.5]$ for $j=1,\dots,5$. The random intercepts follow truncated normal distribution on $[16,17]$, $[21,22]$, $[26,27]$, $[31,32]$ and $[26,27]$. The price variable is bounded, reflected as $p\in[1,10]$. 
Additionally, we set a lower bound on $x$ to keep the factory/warehouse active. 

The sample set at iteration $k$ is $S_k=\{\xi_1,\dots,\xi_{N_k}\}$.
The second-stage problem is coupled both in constraint and objective with $p$. 
The function $R(\cdot,\xi)$ is upper-$\Ctwo$ in $p$.
To illustrate this numerically, we set $N_k=1000$ and compute the estimate of $\Ebb[R(p,\xi)]$ with  
$ \frac{1}{N_k} \sum_{i=1}^{N_k} R(p,\xi_i)$. 
Moreover, given the affine nature of the second-stage problem, it is possible to compute the analytic (true) expression of $\Ebb[R(p,\xi)]$ and $g_k\in \bar{\partial}r(x_k)$ for $r(p)$. Both the true value function and subgradient are plotted in Figure~\ref{fig:ex_ssp}. The true subgradient $g_k$ is used in establishing the error measure as well.

As shown in Figure~\ref{fig:ex_ssp}, the function $r(p)$ is nonsmooth nonconvex and upper-$\Ctwo$. Further, an accurate estimate of it can be established with sufficient number of sample points. 
The constraints of the first-stage problem are affine, and therefore Algorithm~\ref{alg:sto-convex} and~\ref{alg:sto-adaptive} can be applied. The parameters of the algorithms are $\alpha_0=15$, $\eta=1$, $\eta_{\alpha}=1.5$, and  $x_0=[1.5,1.5]$. 
\begin{figure}
  \centering
  \includegraphics[width=0.9\textwidth]{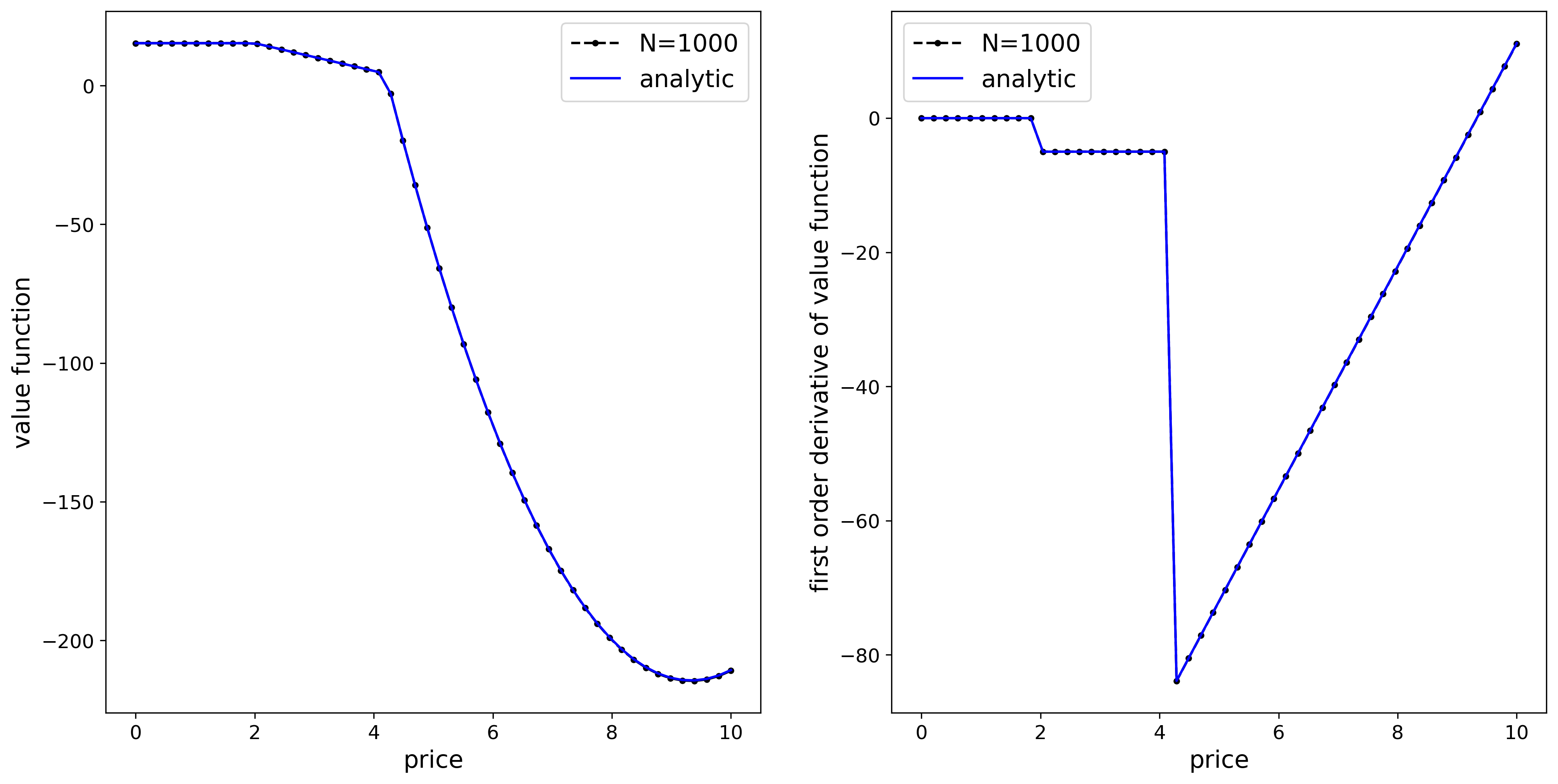}
	\caption{Value function and subgradient with analytic expression and $N_k=1000$ sample average}
\label{fig:ex_ssp}
\end{figure}

We consider five different sampling strategies for Algorithm~\ref{alg:sto-convex}. Three of them have fixed $N_k$ at values $10,100,1000$. The fourth one uses a predetermined increasing sample size $N_k=k^{1.25}$, which corresponds to a decreasing (and summable) $M_k$.  The last one uses~\eqref{imp:sto-eqcons-gk-norm} to adaptively update $N_k$. Both varying sample sizes are capped at $1000$. 
The error at each step is computed based on the stationary measure $\norm{g_k- \nabla c(x_k) \lambda^{k}}$ with the true subgradient $g_k$. The constraints $c(x_k)$ come from the inequality constraints in~\eqref{prob:ex1-1st}  and the optimal Lagrange multipliers $\lambda^{k+1}\in\Rbb^m$ are obtained by solving a least-square optimization problem  
\begin{equation}\label{prob:ex1-lam}
 \begin{aligned}
  &\underset{\lambda}{\text{minimize}} 
	  & &  \norm{g_k - \nabla c(x_k)\lambda }^2\\
   &\text{subject to}
	  &&  c_j(x_k) \lambda_j =0, j=1,2,\cdots,m\\
	  &&& \lambda \geq 0. 
 \end{aligned}
\end{equation}
Each sampling strategy is given a budget of $50000$ second-stage problem solutions and we call every $500$ second-stage solves an epoch. Each run is repeated five times and average values of quantities of interest are used as results. The convergence and cost results are plotted in Figure~\ref{fig:ex_ssp_error}, where the fourth strategy is marked as $m_k$ and the fifth one $adaptive$. 

The result indicates that SQP with fixed sample sizes can solve nonsmooth problems with upper-$\Ctwo$ objectives to certain accuracies as expected. The larger the sample size, the smaller the average optimality error. This is also observed for the fourth sampling strategy as its sample size gradually increases to around $650$ at the end of the computing budget, and its average error continues to decrease. Compared to the adaptive sampling result, its sample size increases more slowly, controlled by our choice of predetermined sequence $N_k=k^{1.25}$.  
Comparing the adaptive sampling strategy to SAA with $N_k=1000$, the same level of optimality measure can be reached by the former with much fewer overall second-stage problem solutions and subgradient evaluations. 
This is reflected by the fact that $N_k$ for adaptive sampling increased from $2$ to the maximum value $1000$ at around $22$ iterations (see Figure~\ref{fig:ex_sample_size}) and the error peak occurs much sooner in the epoch plot of Figure~\ref{fig:ex_ssp_error} 
compared to the $N_k=1000$ SAA. The same conclusion can be drawn comparing the adaptive (fifth) to predetermined (fourth) sampling strategies. 
We note that for the fourth strategy, it is certainly possible to design a more problem-dependent sequence $N_k$ that can outperform the adaptive sampling strategy in terms of efficiency and accuracy. 
\begin{figure}
  \centering
  \includegraphics[width=0.95\textwidth]{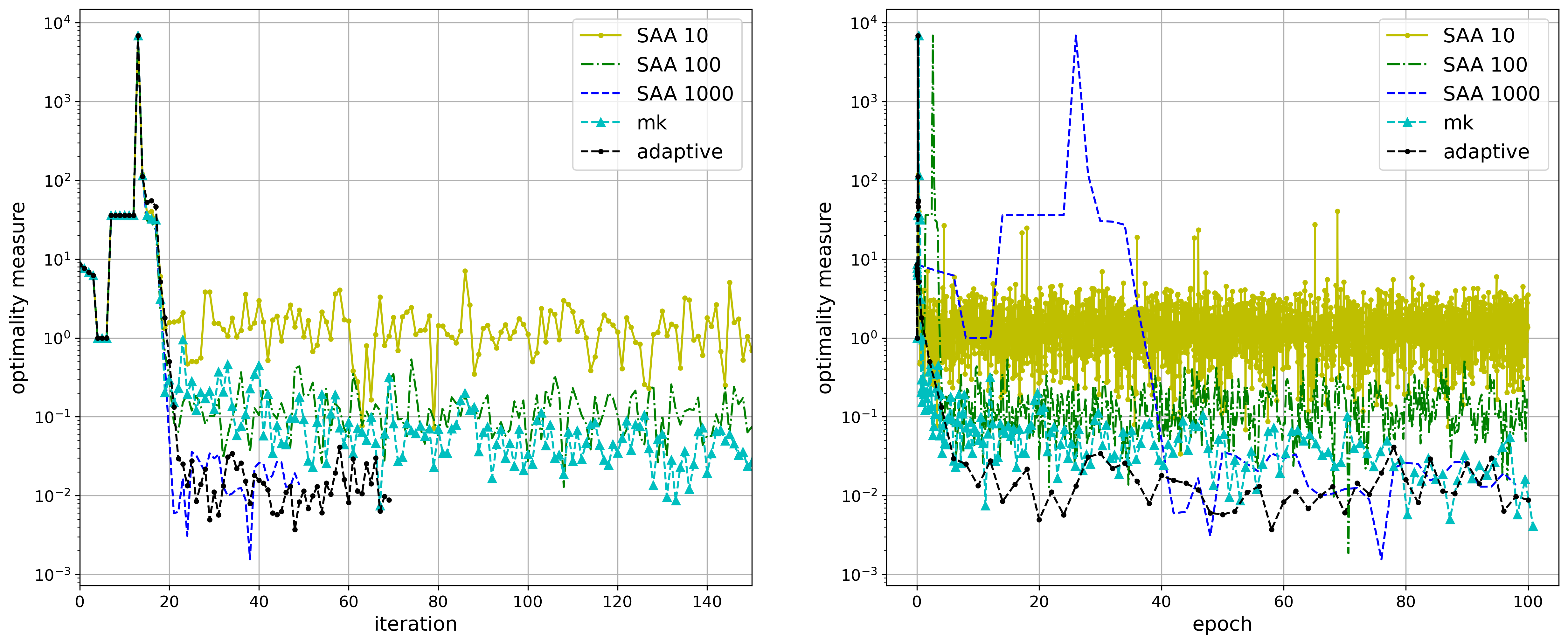}
	\caption{Convergence and cost of Example 1 (Algorithm~\ref{alg:sto-convex}) with different sampling strategies.} 
\label{fig:ex_ssp_error}
\end{figure}

In the second example, we apply the algorithm with adaptive sampling to SCACOPF problems, given the deterministic SAA SQP success in~\cite{wang2022}.
\begin{example}\label{exmp:2}
	(smoothed SCACOPF) Example 2 is a SCACOPF problem with coupling affine active power constraints for 
	contingency (second-stage) problems. The network data used in this example is from the ARPA-E Grid Optimization competition~\cite{petra_21_gollnlp}. 
	The full mathematical formulation is complex but the base (first-stage) problem fits in the form of~\eqref{eqn:opt-prob-sto}, 
	where $r(\cdot)$ is the expectation of the recourse function of the contingency problems. 
	Details of the 
	problem setup can be seen in~\cite{petra_21_gollnlp}. 
        Using a quadratic penalty of the coupling constraints in the contingency problems, $R(\cdot,\xi)$ becomes upper-$\Ctwo$ in $x$.
	The problem is hence referred to as the smoothed SCACOPF.
\end{example}

For the purpose of demonstration, we consider a set of $350$ contingencies that follow discrete uniform distribution. 
The random variable $\xi$ models an integer that represents one of the contingencies.  
The objective $r(\cdot)$ is thus the average of $R(x,\xi_i),i=1,\cdots,350$ and is upper-$\Ctwo$.
Hence, the true objective and subgradient can be obtained by going through exhaustively the complete set of contingencies. 
We point out that for contingency problems that employs more complex probability distribution, the true objective would be unavailable.
Based on the potential values of the objective, the parameters of Algorithm~\ref{alg:sto-eqcons} and~\ref{alg:sto-adaptive} are set to $\alpha_0=5\times 10^6$, $\eta_{\alpha}=1.5$, $\gamma=10$, $\eta_{\beta}=0.2$, $\nu_k=1$, $\mu_k=0$ and $\eta=10^5$. The large $\eta$ value is chosen based on $\alpha_k$.

Four sampling strategies are compared with three fixed sample sizes $15,50,150$ and the adaptive sampling one. An upper limit of $150$ is imposed on $N_k$ in the adaptive sampling algorithm. All four tests run for at least 200 iterations.  
The constraints are satisfied by all four runs to acceptable levels.
At each iteration of the algorithms, the true objectives are evaluated and plotted in Figure~\ref{fig:ex_scacopf}. 
It is clear that increasing the sample size provides a more stable reduction in the objective value as expected. 
With a sample size of $150$, which is smaller than the number of discrete random values, the oscillation from the stochastic algorithm reaches a tolerable level.  Smaller sample sizes still manage to reduce objective but cannot produce stabilized results. 
Figure~\ref{fig:ex_scacopf-epoch} plots the first $4500$ contingency problem solutions (epoch) and it is clear that 
the adaptive sampling strategy successfully generates decrease rather quickly while following a stable path later on as designed.
\begin{figure}
  \centering
  \includegraphics[width=0.9\textwidth]{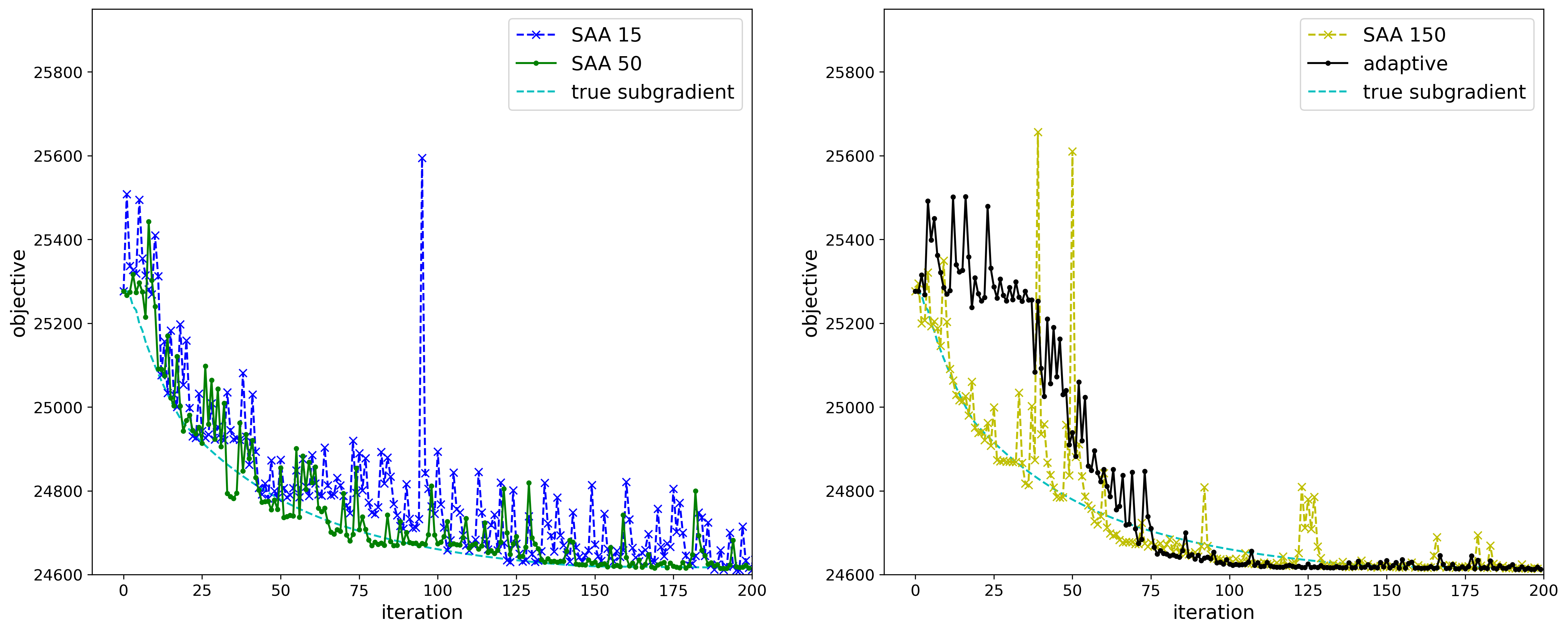}
	\caption{Objective \textit{v.s.} iteration for Example 2 with different sampling strategies.} 
\label{fig:ex_scacopf}
\end{figure}
\begin{figure}
  \centering
  \includegraphics[width=0.85\textwidth]{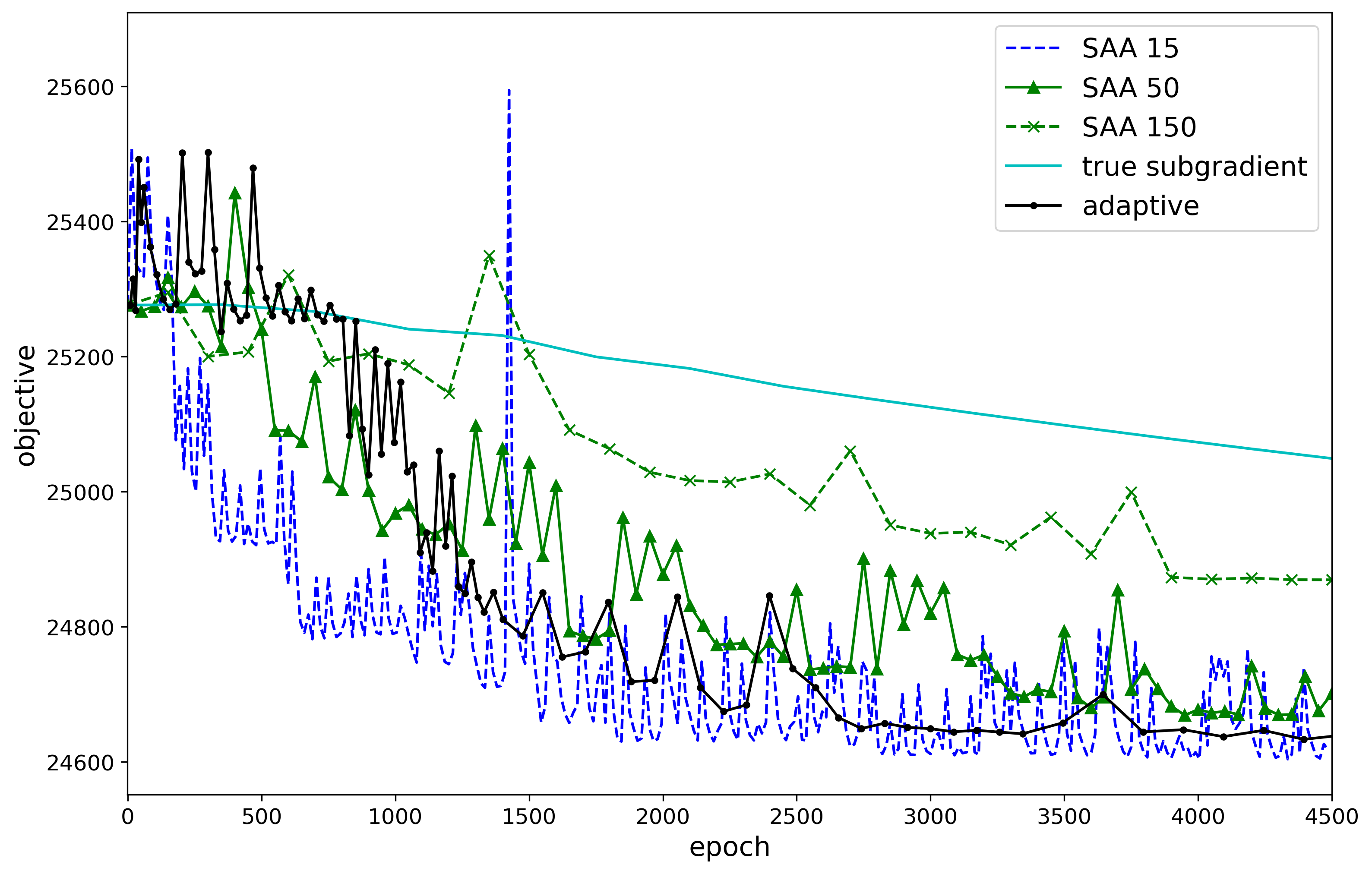}
	\caption{Objective \textit{v.s.} epoch for Example 2 with different sampling strategies.} 
\label{fig:ex_scacopf-epoch}
\end{figure}

\begin{figure}
  \centering
  \includegraphics[width=0.9\textwidth]{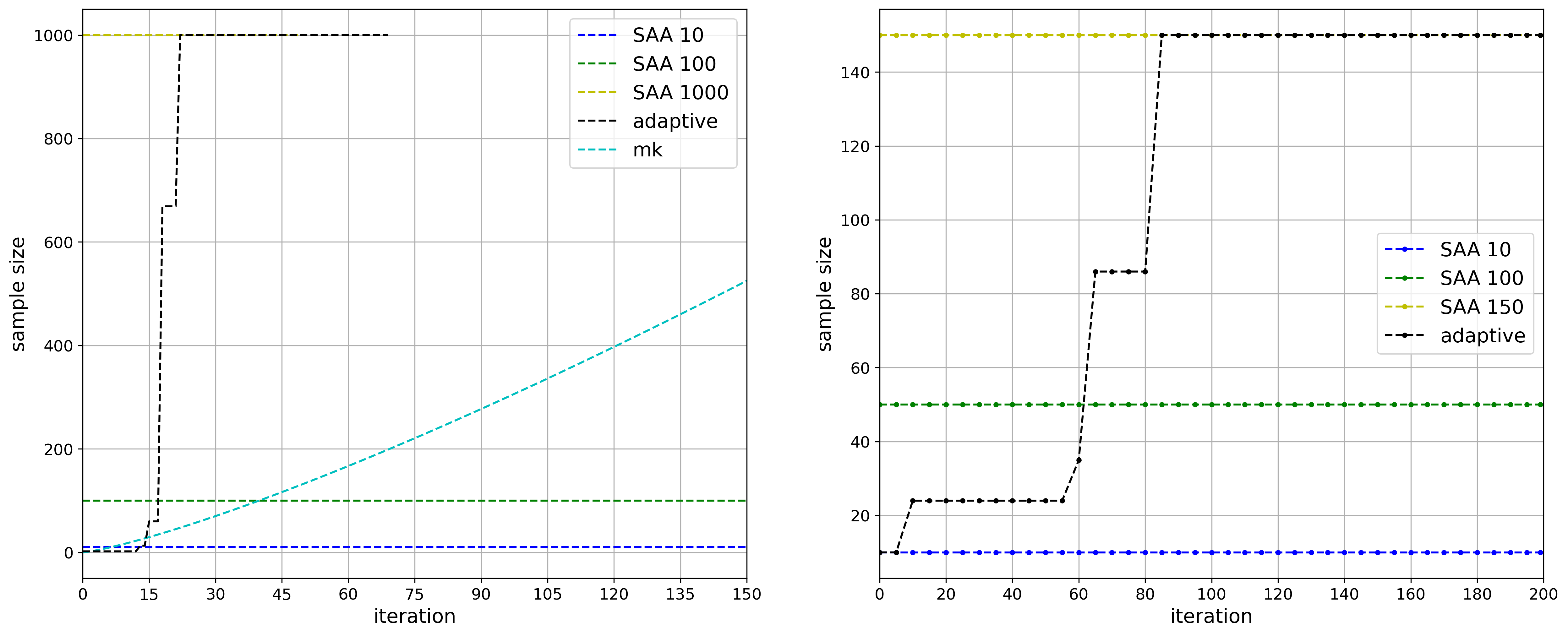}
	\caption{Sample size plots for both examples.} 
\label{fig:ex_sample_size}
\end{figure}
The evolution of sample size $N_k$ for both examples are shown in Figure~\ref{fig:ex_sample_size}.
The result of example 2 encourages the application of adaptive sampling to SCACOPF problems that 
might be too large to be solved conventionally or that have a more complicated probability distribution for $\xi$.

\section{\normalsize Conclusions}\label{sec:con}
In this paper, we have proposed and analyzed SQP algorithms for constrained stochastic nonsmooth nonconvex optimization problems
with upper-$\Ctwo$ objectives. A range of important problems fit the mathematical setup, particularly two-stage stochastic optimization problems. 
The proposed algorithms help to fill the gap of nonsmooth stochastic optimization algorithms with adaptive sampling. Furthermore, problems with and without smooth equality constraints are discussed separately with two distinct algorithms.
Both algorithms formulate a convex quadratic programming subproblem at each iteration based on 
stochastic approximation of the objective. The equality constrained problem requires additional, carefully designed line search to ensure overall progress.
Subsequential convergence analysis of the proposed pair of algorithms with respect to expectation is provided using widely adopted assumptions.
The adaptive sampling algorithm can be used in practice to improve efficiency of the algorithm.


\appendix

\section*{Acknowledgments}
This work was performed under the auspices of the U.S. Department of Energy by Lawrence Livermore National Laboratory under contract DE--AC52--07NA27344.  
\bibliographystyle{siamplain}
\bibliography{bibliography}
\end{document}